\documentclass[12pt,reqno,a4paper]{amsart}

\usepackage{mlmodern} 
\usepackage[T1]{fontenc}

\usepackage{microtype}
\DisableLigatures[f]{encoding = *, family = * }


\usepackage{amsxtra,mathrsfs,mathtools}
\usepackage[colorlinks=true, pdfstartview=FitW]{hyperref}

\usepackage[capitalise]{cleveref}

\AtBeginDocument{\frenchspacing}

\usepackage{enumitem}
\setlist[enumerate,1]{font=\upshape}


\setlist{nosep}

\setlength{\textwidth}{16.5cm}
\hoffset=-1.8cm
\allowdisplaybreaks[4]

\newtheorem{theorem}{Theorem}[section]
\newtheorem{proposition}[theorem]{Proposition}
\newtheorem{lemma}[theorem]{Lemma}

\newtheorem{remark}[theorem]{Remark}

\theoremstyle{definition}
\newtheorem{definition}[theorem]{Definition}

\numberwithin{equation}{section}



\title{Variational principles for BS  dimension  under amenable group actions}

\author{Zhongxuan Yang$^{}$}
\address{College of Mathematics and Statistics, Chongqing University, Chongqing 401331, China}
\email{yzx@stu.cqu.edu.cn}

\subjclass[2020]{37A35, 37B40, 28D20, 37C45}

\keywords{Amenable group actions, BS dimension, BS packing dimension, Bowen's equation, variational principle, inverse variational principle}


\begin{document}
	\begin{abstract}
		In this manuscript, we focus on the investigation of  the BS dimension and BS packing dimension under amenable group actions. Firstly, we  obtain a Bowen’s equation which illustrate the relation of  BS packing dimension to the  packing topological pressure under amenable group actions. Moreover, we establish the variational principle and inverse variational principle for BS dimension and BS packing dimension under amenable group actions.  Finally, we also get  an analogue of Billingsley’s type theorem for  BS packing dimension under amenable group actions.
		
	\end{abstract}
	
	\maketitle

	\section{Introduction}
	
	Kolmogorov \cite{kolmogorov1958new} introduced measure-theoretical entropy for measure preserving dynamical systems. Later,
	the topological entropy was proposed by Adler, Konheim and McAndrew \cite{adler1965topological} to describe the topological complexity of dynamical systems.  Bowen \cite{bowen1971entropy} gave equivalent notions for topological entropy  via spanning sets and separated sets. Besides, Bowen \cite{bowen1973topological} gave a dimensional description of entropy, which was further investigated by Pesin and Pitskel \cite{pesin1984topological},
	these new perspectives led to fruitful results in dimension theory, ergodic theory, multifractal analysis and other fields of dynamical systems. The basic relation between topological entropy and measure-theoretic entropy is the variational principle which was proved by Goodwyn, Dinaburg and Goodman \cite{goodman1971relating,dinaburg1971connection,goodwyn1969topological}.  Feng et al. \cite{feng2012variational} proposed another dimension version entropy named
	the packing topological entropy, and they also established the variational principles for the Bowen topological entropy and the packing topological entropy. Wang \cite{wang2021some} proved two new variational principles for Bowen and packing topological entropies by introducing Bowen entropy and packing entropy of measures in the sense of Katok.
	 Zheng et al. \cite{zheng2016bowen} extended the Bowen entropy variational principle to amenable group actions. Huang  et al. \cite{huang2019bowen} extended the definition of Bowen topological entropy of subsets to continuous action of amenable groups on a compact metrizable space, and they proved the Bowen topological entropy of subsets for amenable group actions can be determined via the local entropies of measures. Dou et al. \cite{dou2023packing} extended the results of the packing topological entropy  in \cite{feng2012variational} to the packing topological entropy under amenable group actions.

	Topological pressure, as a generalization of topological entropy, was introduced by Ruelle \cite{ruelie1973statistical} and extended to compact spaces with continuous transformations by Walters \cite{walters1982introduction}.  Later, inspired by the dimension theory \cite{pesin2008dimension},  Pesin and Piskel \cite{pesin1984topological} extended the notion of topological pressure on arbitrary subsets for general compact metric space.  Tang et al. \cite{tang2015variational}  generalized the Bowen entropy variational principle in \cite{feng2012variational}  to Pensin-Pitskel pressure. Following the ideas given by Feng and Huang \cite{feng2012variational} and  Wang \cite{wang2021some}, Zhong et al.
	further studied the variational principles of Pesin–Pitskel and packing topological pressures \cite{zhong2023variational}. Recently, Huang et al. \cite{huang2020variational} established the variational principle of Pesin-Pitskel pressure under amenable group actions.  Ding et al. \cite{ding2025packing} generalized the results of \cite{dou2023packing} to the packing topological pressure under amenable group actions.

	In the setting of quasi-circles, Bowen \cite{bowen1979hausdorff} firstly found the Hausdorff dimension of certain compact set is exactly the unique root of the equation defined by the topological pressure of geometric potential function, which was later known as Bowen’s equation. After that, Bowen’s equation has been widely investigated. Barreira and Schmeling \cite{barreira2000sets}  got Bowen’s equation of BS dimension, which was proved to be the unique root of the equation defined by topological pressure of additive potential function. In 2012, Wang et al. \cite{wang2012variational} studied Bowen’s equation of BS packing dimension, which showed BS packing dimension is the unique root of the equation defined by packing topological pressure function. Motivated by the works of  \cite{barreira2000sets,bowen1979hausdorff,jaerisch2014induced}, Xing et al. \cite{xing2015induced}  proved that an important connection between   the classical topological pressure and the induced topological pressure  is Bowen’s equation, they also pointed out  that BS dimension is a special case of the induced topological pressure. Recently, 
	Yang et al. \cite{yang2022bowen} introduced the notion of BS metric mean dimension on arbitrary subsets, then established establish Bowen’s equations for Bowen upper metric mean dimension and variational principles for BS metric mean dimension.
	Ding et al. \cite{ding2023variational} studied the  BS dimension of subsets of finitely generated free semigroup actions and they proved BS dimension of subsets of finitely generated free semigroup actions is the unique root of the equation defined by Pesin–Pitskel topological pressure in \cite{xu2018variational}. Besiders, they also obtained a variational principle for BS dimension. Liu and Peng \cite{liu2025variational} introduced and studied the BS (Barreira-Schmeling) dimension for amenable group actions. They proved a Billingsley type theorem characterizing the BS dimension in terms of measure-theoretic quantities, and established a variational principle that connects the topological and measure-theoretic versions of the BS dimension. Yang \cite{yang2025variational} developed the BS-dimension of subsets using neutralized Bowen open balls instead of usual Bowen balls. He proved  a variational principle between neutralized BS-dimension and measure-theoretic BS-dimension.

	Let $(X, G)$ be a $G$-action topological dynamical system, where $X$ is a compact metric space with metric $d$ and $G$ is a topological group acting continuously on $X$. Throughout this manuscript we assume that $G$ is a countable infinite discrete amenable group. Denote $\mathbb{F}(G)$ the collection of all finite subsets of $ G$.
	Recall that a
	countable discrete group $G$ is amenable if there is a sequence of non-empty finite subsets
	$\{F_n\}_{n=1}^{\infty}$ of $G$ such that 
	$$\lim_{n \to \infty} \frac{|F_n \bigtriangleup gF_n|}{|F_n|}=0, \forall g\in G,$$
	and such $\{F_n\}_{n=1}^{\infty}$ is called as a Følner   sequence in $G$. Since $G$ is infinite, the sequence $|F_n|$ tends to infinity. Without loss of generality we can assume that $|F_n|$ increases when $n$ increases.  One can  refer to \cite{ornstein1987entropy,kerr2016ergodic} for some more details about amenable group actions.

	Let $\{F_n\}_{n=1}^{\infty}$ be a Følner  sequence in $G$. Throughout this manuscript, we always let the Følner  sequence $\{F_n\}_{n=1}^{\infty}$ satisfying $|F_n| \to \infty(n \to \infty)$  be fixed unless otherwise specified. These researches in \cite{barreira2000sets,dou2023packing,wang2012variational,zhong2023variational,wang2021some} provide us with the motivation to explore the variational principles of BS dimension and BS packing dimension under amenable group actions. To see it, in this manuscript, we introduce the notions of BS dimension, BS packing dimension under amenable group actions and several corresponding  measure-theoretic BS dimension. Subsequently, we obtain several variational principles and inverse variational principles, and we list them as follows.

	\begin{theorem}\label{m1}
		Let  $\Phi: X\rightarrow \mathbb{R}$ be a  positive continuous function and  $H$ be a non-empty compact subset of $X$.  Then 
		\begin{align*}
			dim_H^{\widetilde{BS}}(\{F_n\}_{n=1}^{\infty},\Phi)&=\sup\{\underline{P}_{\mu}^{\widetilde{BK}}(\{F_n\}_{n=1}^{\infty},\Phi):\mu\in {M}(X),\mu(H)=1\}\\
			&=\sup\{dim_{\mu}^{\widetilde{BS}}(\{F_n\}_{n=1}^{\infty},\Phi):\mu\in {M}(X),\mu(H)=1\}\\
			&=\sup\{dim_{\mu}^{\widetilde{K}}(\{F_n\}_{n=1}^{\infty},\Phi):\mu\in {M}(X),\mu(H)=1\},
		\end{align*}
		where $dim_H^{\widetilde{BS}}(\{F_n\}_{n=1}^{\infty},\Phi)$, $\underline{P}_{\mu}^{\widetilde{BK}}(\{F_n\}_{n=1}^{\infty},\Phi)$, $dim_{\mu}^{\widetilde{BS}}(\{F_n\}_{n=1}^{\infty},\Phi)$ and $dim_{\mu}^{\widetilde{K}}(\{F_n\}_{n=1}^{\infty},\Phi)$ denote BS dimension of $H$, lower measure-theoretic BS dimension of $\mu$, BS dimension of $\mu$ and BS dimension in the sense of Katok of $\mu$, respectively (more details about these notions in section \ref{Preliminary}).
	\end{theorem}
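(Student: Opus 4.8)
The plan is to prove that the four quantities coincide by closing a short cycle of inequalities. Write $d:=dim_H^{\widetilde{BS}}(\{F_n\}_{n=1}^{\infty},\Phi)$, and let $A$, $B$, $C$ denote, respectively, the suprema of $\underline{P}_{\mu}^{\widetilde{BK}}(\{F_n\}_{n=1}^{\infty},\Phi)$, $dim_{\mu}^{\widetilde{BS}}(\{F_n\}_{n=1}^{\infty},\Phi)$ and $dim_{\mu}^{\widetilde{K}}(\{F_n\}_{n=1}^{\infty},\Phi)$ over all $\mu\in M(X)$ with $\mu(H)=1$. I would establish $d\le A$, then $A\le B\le d$ and $A\le C\le d$, which together force $A=B=C=d$. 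Two facts are used everywhere. Since $\Phi$ is positive and continuous on the compact space $X$, there are constants $0<c_1\le c_2$ with $c_1|F|\le S_F\Phi(x)\le c_2|F|$ for all $x\in X$ and $F\in\mathbb{F}(G)$; hence $S_{F_n}\Phi$ grows like $|F_n|\to\infty$, and the weights $e^{-sS_{F_n}\Phi}$ play, up to bounded distortion, the role of the scales $e^{-sn}$ in the classical Carathéodory construction. Moreover, writing $\omega_\Phi$ for the modulus of continuity of $\Phi$, one has $|S_F\Phi(x)-S_F\Phi(y)|\le|F|\,\omega_\Phi(\varepsilon)$ whenever $d(gx,gy)\le\varepsilon$ for every $g\in F$; this bounded-distortion estimate lets one move between weights at the centre of a Bowen ball and weights at its interior points with a controlled error.

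For the easy directions I would argue as follows. The bound $A\le d$ is a mass-distribution argument: for $\mu$ with $\mu(H)=1$ and $s<\underline{P}_{\mu}^{\widetilde{BK}}(\{F_n\}_{n=1}^{\infty},\Phi)$ there is a set $H'\subseteq H$ with $\mu(H')>0$ on which $\mu(B_{F_n}(x,\varepsilon))\le e^{-sS_{F_n}\Phi(x)}$ for all large $n$; covering $H'$ by Bowen balls, re-centering them at points of $H'$ at the cost of doubling $\varepsilon$ and absorbing the distortion error $|F_n|\,\omega_\Phi(\varepsilon)$, and summing measures, one finds the $s$-dimensional BS-Carathéodory measure of $H'$ positive, so $d\ge dim_{H'}^{\widetilde{BS}}\ge s$. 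The same computation applied to any Borel $Z$ with $\mu(Z)=1$ gives $dim_{Z}^{\widetilde{BS}}\ge\underline{P}_{\mu}^{\widetilde{BK}}$, hence $\underline{P}_{\mu}^{\widetilde{BK}}\le dim_{\mu}^{\widetilde{BS}}$ and $A\le B$. For $B\le d$, note that $dim_{\mu}^{\widetilde{BS}}$ is an infimum of $dim_{Z}^{\widetilde{BS}}$ over sets $Z$ of $\mu$-measure close to $1$, and since $\mu(H)=1$ one may replace $Z$ by $Z\cap H$ and use monotonicity $dim_{Z\cap H}^{\widetilde{BS}}\le dim_{H}^{\widetilde{BS}}$. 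For $A\le C\le d$, a Brin--Katok-type comparison shows a minimal Katok spanning set of $\mu$-mass $\ge1-\delta$ contains a Bowen ball of measure at least $(1-\delta)/N_\mu(F_n,\varepsilon,\delta)$, giving $\underline{P}_{\mu}^{\widetilde{BK}}\le dim_{\mu}^{\widetilde{K}}$ after re-centering, while covering a large-measure part of $H$ by the Katok spanning balls and estimating the associated Carathéodory sum gives $dim_{\mu}^{\widetilde{K}}\le d$. These are routine transcriptions of the arguments of \cite{feng2012variational,dou2023packing,ding2025packing} to the additive potential $\Phi$, and I do not expect them to cause difficulty.

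The hard part, and the main obstacle, will be $d\le A$: given $0<s<d$, produce $\mu\in M(X)$ with $\mu(H)=1$ and $\underline{P}_{\mu}^{\widetilde{BK}}(\{F_n\}_{n=1}^{\infty},\Phi)\ge s$. Since $s<d$, the $s$-dimensional BS-Carathéodory outer measure of $H$ is infinite, and I would run the Feng--Huang construction in its amenable-group form (as in \cite{dou2023packing,ding2025packing}): inductively build a nested Cantor-like compact $K\subseteq H$ and a measure $\mu$ on $K$ by, at each stage, using the infinitude of the $s$-Carathéodory measure to pick a large finite $(F_n,\varepsilon)$-separated family of Bowen balls with $n$ large and same-scale weights $e^{-sS_{F_n}\Phi}$ summing to a fixed amount, and spreading mass across them in proportion to their weights; a weak-$*$ limit of the stage measures is $\mu$, and the weight bookkeeping yields $\liminf_n\frac{-\log\mu(B_{F_n}(x,\varepsilon_k))}{S_{F_n}\Phi(x)}\ge s$ for $\mu$-a.e.\ $x$ along a sequence $\varepsilon_k\downarrow0$, i.e.\ $\underline{P}_{\mu}^{\widetilde{BK}}\ge s$. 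The delicate points specific to this setting are: (i) the Følner sets $F_n$ are only asymptotically invariant, so the Bowen balls $B_{F_n}(\cdot,\varepsilon)$ are \emph{not} nested across $n$ and one cannot iterate ``time intervals''; one must pass to a well-chosen subsequence of $\{F_n\}_{n=1}^{\infty}$ and use $c_1|F_n|\le S_{F_n}\Phi\le c_2|F_n|$ together with $|F_n|\to\infty$ to keep the Carathéodory sums balanced at every stage; (ii) the classical Vitali ($5r$) covering combinatorics must be replaced by an amenable analogue for Bowen balls over Følner sets, and this is precisely where the bounded-distortion estimate is essential, since it bounds the change in weight when a ball is enlarged to a concentric double ball; and (iii) ensuring the limit $\mu$ is a probability measure supported in $H$ needs the usual compactness/tightness argument, available because $X$ is compact and each stage measure lives on a finite union of closed Bowen balls contained in $H$. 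An alternative route, should the Bowen-equation identification of $d$ with the root of the Pesin--Pitskel pressure equation for $-s\Phi$ be available, would be to invoke the variational principle for Pesin--Pitskel pressure under amenable actions \cite{huang2020variational}; but I expect the direct construction above to fit more naturally with the companion packing statement. Once $d\le A$ is in hand the cycle closes and the theorem follows.
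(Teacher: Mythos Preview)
Your cycle of inequalities is sound and the easy directions are fine; the paper handles those the same way, via Proposition~2.11 (your $A\le C$), Theorem~2.9 ($B=C$), and the trivial $B\le d$ from the definition of $dim_\mu^{\widetilde{BS}}$. The substantive divergence is in the hard direction $d\le A$.

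The paper does not reprove $d\le A$ at all: it invokes Theorem~3.5, which is quoted verbatim from Liu--Peng \cite{liu2025variational}, and the proof of the present theorem is then a three-line assembly of that citation with the auxiliary results above. Your proposal to redo this step directly is reasonable in spirit but uses the wrong machinery. The inductive Cantor/Joyce--Preiss construction you describe---pick a large separated family whose weights sum to a prescribed amount, refine, take a weak-$*$ limit---is the technique for \emph{packing}-type dimensions, and indeed the references you cite for it, \cite{dou2023packing,ding2025packing}, are packing papers; that is exactly the construction the present paper runs later for Theorem~\ref{m3}. For a Hausdorff-type quantity like $dim_H^{\widetilde{BS}}$, infinitude of the $s$-Carath\'eodory measure says that every \emph{cover} has large weighted sum, not that you can extract a separated family with controlled sum, so the inductive step as you state it does not get off the ground. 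The standard route here (and the one in \cite{zheng2016bowen,huang2019bowen,liu2025variational}) is: introduce a \emph{weighted} BS dimension, prove it equals the unweighted one via a $5r$-covering lemma for Bowen balls (this is the paper's Proposition~3.4), and then run a dynamical Frostman lemma on the weighted outer measure to manufacture $\mu$ with $\mu(B_{F_n}(x,\varepsilon))\le C e^{-s\Phi_{F_n}(x)}$. Your concerns (i)--(ii) about non-nested F{\o}lner balls and covering combinatorics are exactly the obstacles that the weighted formulation is designed to absorb.

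Your alternative route---Bowen's equation plus the Pesin--Pitskel variational principle of \cite{huang2020variational}---would also work and is morally equivalent to what the paper does by citing \cite{liu2025variational}.
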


	\begin{theorem}\label{m2}
		Let  $\Phi: X\rightarrow \mathbb{R}$ be a  positive continuous function and  $\mu\in {M}(X)$. Then
		$$
		\begin{aligned}
			dim_{\mu}^{\widetilde{BS}}(\{F_n\}_{n=1}^{\infty},\Phi)&=dim_{\mu}^{\widetilde{K}}(\{F_n\}_{n=1}^{\infty},\Phi)\\
			&=  \inf \left\{dim_H^{\widetilde{BS}}(\{F_n\}_{n=1}^{\infty},\Phi):  \mu(H)=1\right\}.
		\end{aligned}
		$$
	\end{theorem}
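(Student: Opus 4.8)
The plan is to pin down both equalities by bounding $I:=\inf\{dim_H^{\widetilde{BS}}(\{F_n\}_{n=1}^{\infty},\Phi):\mu(H)=1\}$ on both sides, invoking \cref{m1} on compact subsets as a black box. One elementary remark is used repeatedly: since $\Phi$ is continuous and strictly positive on the compact space $X$, $(S_{F_n}\Phi)(x)\ge|F_n|\min_X\Phi\to\infty$, so a local lower BS quantity such as $\liminf_{n\to\infty}\frac{-\log\mu(B_{F_n}(x,\varepsilon))}{(S_{F_n}\Phi)(x)}$ is insensitive to replacing $\mu$ by a renormalized restriction $\mu_K:=\mu(\,\cdot\cap K)/\mu(K)$ with $\mu(K)>0$: the bounded correction $\log\mu(K)$ disappears after division by a diverging denominator, while $-\log\mu(B\cap K)+\log\mu(B)\ge0$ only helps in a lower bound. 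Consequently the measure-theoretic BS quantities attached to $\mu_K$ behave continuously as $\mu(K)\uparrow1$.

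\emph{Lower bound for $I$.} Fix a Borel $H$ with $\mu(H)=1$ and $\varepsilon>0$, and by inner regularity pick a non-empty compact $K\subseteq H$ with $\mu(K)>1-\varepsilon$. Monotonicity of the Carathéodory--Pesin construction defining the BS dimension gives $dim_H^{\widetilde{BS}}(\{F_n\}_{n=1}^{\infty},\Phi)\ge dim_K^{\widetilde{BS}}(\{F_n\}_{n=1}^{\infty},\Phi)$, and \cref{m1} applied to $K$ with the test measure $\mu_K$ (note $\mu_K(K)=1$) gives $dim_K^{\widetilde{BS}}(\{F_n\}_{n=1}^{\infty},\Phi)\ge dim_{\mu_K}^{\widetilde{K}}(\{F_n\}_{n=1}^{\infty},\Phi)$ and, likewise, $\ge dim_{\mu_K}^{\widetilde{BS}}(\{F_n\}_{n=1}^{\infty},\Phi)$. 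Since a set has $\mu_K$-measure $>1-\delta$ precisely when its intersection with $K$ has $\mu$-measure $>(1-\delta)\mu(K)$, the Katok covering quantities for $\mu_K$ at level $\delta$ dominate those for $\mu$ at level $\delta+\varepsilon$; combined with the renormalization remark and a routine squeeze (first $\delta\to0$, then $\varepsilon\to0$), this gives $dim_H^{\widetilde{BS}}(\{F_n\}_{n=1}^{\infty},\Phi)\ge dim_\mu^{\widetilde{K}}(\{F_n\}_{n=1}^{\infty},\Phi)$ and, by the same argument, $\ge dim_\mu^{\widetilde{BS}}(\{F_n\}_{n=1}^{\infty},\Phi)$. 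Taking the infimum over $H$ yields $I\ge dim_\mu^{\widetilde{K}}(\{F_n\}_{n=1}^{\infty},\Phi)$ and $I\ge dim_\mu^{\widetilde{BS}}(\{F_n\}_{n=1}^{\infty},\Phi)$.

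\emph{Upper bound for $I$.} Here I construct one full-measure witness set. Fix $s>dim_\mu^{\widetilde{K}}(\{F_n\}_{n=1}^{\infty},\Phi)$. Unwinding the definition of the Katok BS dimension, for each $j\ge1$ there are $\delta_j:=2^{-j}$, a scale $\varepsilon_j$ and an index $n_j$ (with $\varepsilon_j\downarrow0$, $n_j\to\infty$), and a finite family $\mathcal{C}_j$ of $(F_{n_j},\varepsilon_j)$-Bowen balls whose union $A_j$ satisfies $\mu(A_j)>1-\delta_j$ and $\sum_{B\in\mathcal{C}_j}\exp\!\big(-s\sup_{y\in B}(S_{F_{n_j}}\Phi)(y)\big)<2^{-j}$. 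Put $H:=\bigcup_{k\ge1}\bigcap_{j\ge k}A_j$; since $\sum_j\mu(X\setminus A_j)<\infty$, Borel--Cantelli gives $\mu(H)=1$. For each fixed $k$ the set $H_k:=\bigcap_{j\ge k}A_j$ is covered by $\mathcal{C}_j$ for every $j\ge k$, with Følner times $n_j\to\infty$ and $s$-weighted sums $\to0$, so its BS Carathéodory $s$-measure vanishes and $dim_{H_k}^{\widetilde{BS}}(\{F_n\}_{n=1}^{\infty},\Phi)\le s$. Countable stability of the Hausdorff-type BS dimension then gives $dim_H^{\widetilde{BS}}(\{F_n\}_{n=1}^{\infty},\Phi)=\sup_{k\ge1}dim_{H_k}^{\widetilde{BS}}(\{F_n\}_{n=1}^{\infty},\Phi)\le s$, and letting $s\downarrow dim_\mu^{\widetilde{K}}(\{F_n\}_{n=1}^{\infty},\Phi)$ yields $I\le dim_\mu^{\widetilde{K}}(\{F_n\}_{n=1}^{\infty},\Phi)$. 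Hence $I=dim_\mu^{\widetilde{K}}(\{F_n\}_{n=1}^{\infty},\Phi)$, and in particular $dim_\mu^{\widetilde{BS}}(\{F_n\}_{n=1}^{\infty},\Phi)\le I=dim_\mu^{\widetilde{K}}(\{F_n\}_{n=1}^{\infty},\Phi)$.

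It remains only to exhibit one full-measure set whose BS dimension does not exceed $dim_\mu^{\widetilde{BS}}(\{F_n\}_{n=1}^{\infty},\Phi)$; combined with the lower bound $I\ge dim_\mu^{\widetilde{BS}}(\{F_n\}_{n=1}^{\infty},\Phi)$ and with $I=dim_\mu^{\widetilde{K}}(\{F_n\}_{n=1}^{\infty},\Phi)$ this forces all three quantities to agree. Writing $\underline{\beta}_\mu(x):=\lim_{\varepsilon\to0}\liminf_{n\to\infty}\frac{-\log\mu(B_{F_n}(x,\varepsilon))}{(S_{F_n}\Phi)(x)}$ for the local lower BS dimension, the natural candidate is $H_0:=\{x:\underline{\beta}_\mu(x)\le dim_\mu^{\widetilde{BS}}(\{F_n\}_{n=1}^{\infty},\Phi)\}$, which has full $\mu$-measure by the definition of $dim_\mu^{\widetilde{BS}}$. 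For $s>dim_\mu^{\widetilde{BS}}(\{F_n\}_{n=1}^{\infty},\Phi)$, every $x\in H_0$ admits, at all small scales, arbitrarily large Følner times $n$ with $\mu(B_{F_n}(x,\varepsilon))\ge\exp(-s(S_{F_n}\Phi)(x))$; converting this into covers of $H_0$ by Bowen balls with uniformly bounded $s$-weighted sums---using the uniform continuity of $\Phi$ to absorb the $\varepsilon$-distortion of $S_{F_n}\Phi$ on a Bowen ball---yields $dim_{H_0}^{\widetilde{BS}}(\{F_n\}_{n=1}^{\infty},\Phi)\le s$, after which $s\downarrow dim_\mu^{\widetilde{BS}}(\{F_n\}_{n=1}^{\infty},\Phi)$ finishes it. I expect the conversion step to be the main obstacle: it rests on a Vitali/Besicovitch-type covering lemma for Bowen balls along the Følner sequence (the Bowen-side analogue of the Billingsley-type machinery used here on the packing side), and in a general amenable group Bowen balls are not metric balls, so one typically needs the Følner sequence to be tempered, or must argue instead through a Brin--Katok-type local formula for the BS pointwise dimension of $\mu$. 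Everything else is bookkeeping around \cref{m1}, Borel--Cantelli, monotonicity, and countable stability of the BS dimension; and if the definitions in Section~\ref{Preliminary} already make $dim_\mu^{\widetilde{BS}}$ and $dim_\mu^{\widetilde{K}}$ coincide essentially by unwinding the limits, this last step shortens accordingly.
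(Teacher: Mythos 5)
Your architecture gets two of the three quantities under control, but the closing step has a genuine gap. You correctly reduce everything to showing $I\le dim_{\mu}^{\widetilde{BS}}(\{F_n\}_{n=1}^{\infty},\Phi)$ (equivalently $dim_{\mu}^{\widetilde{K}}\le dim_{\mu}^{\widetilde{BS}}$, since you already have $I=dim_{\mu}^{\widetilde{K}}$), but the route you propose --- the set $H_0=\{x:\underline{\beta}_\mu(x)\le dim_\mu^{\widetilde{BS}}\}$ --- does not work as stated. First, $\mu(H_0)=1$ does \emph{not} follow ``by the definition of $dim_\mu^{\widetilde{BS}}$'': in this paper $dim_\mu^{\widetilde{BS}}$ is defined (Definition \ref{def 2.7}) through covering quantities $\widetilde{M}_\epsilon(H)$ over sets of almost full measure, not through the pointwise quantity $\underline{\beta}_\mu=\underline{P}_{\mu}^{\widetilde{BK}}(\{F_n\}_{n=1}^{\infty},\Phi,\cdot)$; the only relation available is the integrated inequality $\int\underline{\beta}_\mu\,d\mu\le dim_\mu^{\widetilde{K}}$ (Proposition \ref{prop 2.5}), which gives no almost-everywhere pointwise bound. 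Establishing $\mu(H_0)=1$ honestly would require the Billingsley-type Theorem \ref{billing-1} together with the very infimum characterization you are trying to prove, so the step is circular. Second, even granting $\mu(H_0)=1$, you yourself flag that converting the pointwise lower bound on ball measures into a cover with controlled $s$-weighted sum needs a Vitali/Besicovitch-type covering lemma for Bowen balls, which you do not supply.

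The irony is that the missing inequality is the trivial direction and needs none of this machinery. If $\mu(H)\ge 1-\delta$, then any finite or countable family of Bowen balls covering $H$ has union of $\mu$-measure at least $1-\delta$, hence is admissible in the infimum defining $\Lambda^{s}_{N,\epsilon}(\mu,\{F_n\}_{n=1}^{\infty},\Phi,\delta)$; therefore $\Lambda^{s}_{N,\epsilon}(\mu,\ldots,\delta)\le\widetilde{M}^{s}_{N,\epsilon}(H,\ldots)$, and passing through the critical values and the limits in $N$, $\delta$, $\epsilon$ gives $dim_{\mu}^{\widetilde{K}}\le dim_{\mu}^{\widetilde{BS}}$ directly. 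This one-line comparison is exactly the first half of the paper's Theorem \ref{equ}, and plugging it in closes your argument. The remainder of your proposal is essentially sound and runs parallel to the paper: your Borel--Cantelli witness set for $I\le dim_\mu^{\widetilde{K}}$ is the same union-of-intersections construction the paper uses (in Theorem \ref{equ} and again in Theorem \ref{inverse}), and it does handle the $\epsilon$-dependence correctly because a cover at scale $\varepsilon_j<\epsilon'$ is a cover at scale $\epsilon'$ with unchanged center weights. Your lower bound for $I$, via inner regularity and Theorem \ref{m1} applied to $\mu_K$ on a compact $K\subset H$, is valid but unnecessarily heavy: the same definitional observation above (a cover of a full-measure $H$ is admissible for every $\delta$) gives $dim_\mu^{\widetilde{K}}\le dim_H^{\widetilde{BS}}$ without compactness, regularity, or the variational principle.
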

	
	\begin{theorem}\label{m3}
		Let $\{F_n\}_{n=1}^{\infty}$ be a Følner  sequence in G satisfying $\lim_{n \to \infty}\frac{|F_n|}{\log n}=\infty.$ Then for any non-empty analytic subset H of X and a positive continuous function  $\Phi: X \rightarrow \mathbb{R} $.
		Then 
		\begin{align*}
			dim_H^{\widetilde{BSP}}(\{F_n\}_{n=1}^{\infty},\Phi)&=\sup\{\overline{P}_{\mu}^{\widetilde{BK}}(\{F_n\}_{n=1}^{\infty},\Phi):\mu \in \mathcal{M} (X),\mu(H)=1\}\\
			&=\sup\{dim_{\mu}^{\widetilde{BSP}}(\{F_n\}_{n=1}^{\infty},\Phi,\epsilon):\mu \in \mathcal{M} (X),\mu(H)=1\}\\
			&=\sup\{dim_{\mu}^{\widetilde{KP}}(\{F_n\}_{n=1}^{\infty},\Phi):\mu \in \mathcal{M} (X),\mu(H)=1\},
		\end{align*}
		where $dim_H^{\widetilde{BSP}}(\{F_n\}_{n=1}^{\infty},\Phi)$, $\overline{P}_{\mu}^{\widetilde{BK}}(\{F_n\}_{n=1}^{\infty},\Phi)$, $dim_{\mu}^{\widetilde{BSP}}(\{F_n\}_{n=1}^{\infty},\Phi)$ and $dim_{\mu}^{\widetilde{KP}}(\{F_n\}_{n=1}^{\infty},\Phi)$ denote  BS  packing dimension of $H$, upper measure-theoretic BS dimension of $\mu$, BS  packing dimension of $\mu$ and BS packing dimension in the sense of Katok of $\mu$, respectively (more details about these notions in section \ref{packing dimension}).
	\end{theorem}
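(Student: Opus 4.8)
The plan is to establish the three displayed equalities by closing a chain of inequalities among $dim_H^{\widetilde{BSP}}$ and the three suprema, taken over $\mu\in\mathcal{M}(X)$ with $\mu(H)=1$, of $\overline{P}_\mu^{\widetilde{BK}}$, of $dim_\mu^{\widetilde{BSP}}$ (at the scale $\epsilon$) and of $dim_\mu^{\widetilde{KP}}$, all carrying the fixed data $(\{F_n\}_{n=1}^{\infty},\Phi)$. The three routine links are
\[
\sup_{\mu}\overline{P}_{\mu}^{\widetilde{BK}}\ \le\ \sup_{\mu} dim_{\mu}^{\widetilde{BSP}}\ \le\ \sup_{\mu} dim_{\mu}^{\widetilde{KP}}\ \le\ dim_{H}^{\widetilde{BSP}},
\]
and the substantial one is the reverse inequality $dim_{H}^{\widetilde{BSP}}\le\sup_{\mu}\overline{P}_{\mu}^{\widetilde{BK}}$. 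For the first routine link I would argue measure by measure: the Billingsley-type theorem announced in the abstract (whose amenable-group proof rests on a covering argument for the Bowen balls $B_{F_n}(x,\epsilon)$ normalised by the ergodic sums $S_{F_n}\Phi(x)=\sum_{g\in F_n}\Phi(gx)$) gives $dim_Z^{\widetilde{BSP}}\ge s$ whenever the local quantity is $\ge s$ on a positive-measure set $Z$, so, by monotonicity, $dim_Y^{\widetilde{BSP}}\ge s$ for every full-measure Borel $Y$ and hence $dim_\mu^{\widetilde{BSP}}\ge\overline{P}_\mu^{\widetilde{BK}}$; the scale $\epsilon$ is handled exactly as for $\mathbb{Z}$-actions, where the supremum over $\mu$ turns out not to depend on it. The second link compares the two measure-theoretic definitions (a set realising almost the Katok--packing value at level $\delta$ can be enlarged to full measure), and the third is immediate from the definition of $dim_\mu^{\widetilde{KP}}$, since $H$ itself is an admissible set of measure $\geq 1-\delta$ for every $\delta$.

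For the substantial inequality, fix $s<dim_H^{\widetilde{BSP}}$ and $\epsilon>0$; the goal is to produce $\mu\in\mathcal{M}(X)$ with $\mu(H)=1$ and $\overline{P}_\mu^{\widetilde{BK}}\ge s$. This is the amenable, BS-weighted analogue of the lower-bound half of Feng and Huang's packing-entropy variational principle \cite{feng2012variational} (and of its extensions \cite{dou2023packing,ding2025packing,wang2012variational}). First one passes from the analytic set $H$ to a compact one: a localisation lemma for the Carath\'eodory structure defining the BS packing pre-measure---here analyticity of $H$ is used---produces a non-empty compact $K\subseteq H$ such that the local BS packing pre-measure at level $s$ of $K\cap U$ is still infinite for every open $U$ meeting $K$. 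Next one carries out a Moran-type construction on $K$: choose inductively a rapidly growing subsequence $\{F_{n_k}\}$ of the Følner sequence, and at stage $k$, inside each surviving piece, a finite $(F_{n_k},\epsilon)$-separated subset of $K$ whose $e^{-sS_{F_{n_k}}\Phi}$-weights sum to a prescribed amount (made possible by the local pre-measure property of $K$); spreading mass down this Cantor-like tree and taking a weak-$*$ limit yields $\mu$ with $\operatorname{supp}\mu\subseteq K$, so $\mu(H)=1$. Finally one applies the mass-distribution principle: the construction is arranged so that any Bowen ball $B_{F_{n_k}}(x,\epsilon)$ meeting $\operatorname{supp}\mu$ has $\mu$-mass at most $e^{-(s-\eta_k)S_{F_{n_k}}\Phi(x)}$ with $\eta_k\to0$, which forces $\limsup_k\frac{-\log\mu(B_{F_{n_k}}(x,\epsilon))}{S_{F_{n_k}}\Phi(x)}\ge s$ for $\mu$-a.e.\ $x$ and therefore $\overline{P}_\mu^{\widetilde{BK}}\ge s$; letting $s\uparrow dim_H^{\widetilde{BSP}}$ closes the chain.

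The main obstacle is this mass-distribution estimate in the amenable setting. For $\mathbb{Z}$-actions consecutive Bowen balls nest and the bookkeeping is mild, but for a general countable amenable group the Bowen balls $B_{F_{n_k}}(x,\epsilon)$ of different generations overlap and one must work harder: choose $\{F_{n_k}\}$ growing fast enough, exploit the $(F_{n_k},\epsilon)$-separation of the chosen sets together with a quasi-tiling/covering control for Følner sets, and bound the number of generation-$k$ pieces and of intermediate scales that a single test Bowen ball can meet. These combinatorial overheads contribute a factor that is essentially of order $\log n$ in the exponent, and it is precisely the hypothesis $\lim_{n\to\infty}|F_n|/\log n=\infty$---equivalently, since $\Phi>0$ on the compact space $X$ forces $S_{F_n}\Phi(x)\ge(\min_X\Phi)|F_n|\to\infty$, the fact that $\log n=o\!\left(S_{F_n}\Phi\right)$---that makes this overhead negligible next to the exponential rate $e^{sS_{F_{n_k}}\Phi}$ and hence harmless. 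Getting this counting right, and likewise checking that the localisation lemma survives the passage to amenable group actions and to the BS normalisation, is where the genuine work of the proof lies; the remaining verifications (comparisons of the several measure-theoretic BS packing dimensions and the $\epsilon$-independence after the supremum) are then routine adaptations of the $\mathbb{Z}$-action arguments.
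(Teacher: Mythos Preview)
Your chain of routine inequalities matches the paper's (the paper writes it as $\sup_\mu \overline{P}_\mu^{\widetilde{BK}} \le \sup_\mu dim_\mu^{\widetilde{KP}} = \sup_\mu dim_\mu^{\widetilde{BSP}} \le dim_H^{\widetilde{BSP}}$, via Propositions \ref{prop 2.10} and \ref{prop 2.13}), and you correctly identify the substantial direction as the construction of $\mu$ supported in $H$ with $\overline{P}_\mu^{\widetilde{BK}}\ge s$.

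Where you go wrong is in locating the role of the hypothesis $\lim_{n\to\infty}|F_n|/\log n=\infty$, and this leads you to overcomplicate the hard direction. The growth condition enters \emph{only} through the Billingsley-type lower bound (Theorem \ref{billing-2}(2)), i.e.\ in what you call the first ``routine'' link: there one needs $e^{|F_n|\hat\Phi\delta}/(n(n+1))\to\infty$ to pass from a pointwise mass bound on Bowen balls to $\widetilde{L}^\beta_{\epsilon/10}(E)=\infty$. The construction of $\mu$ does not use the condition, and no quasi-tilings or overlap counting appear. The paper simply runs the Joyce--Preiss/Feng--Huang machinery essentially unchanged from the $\mathbb{Z}$-case: after the localisation $H'=H\setminus Q$ (and its inductive refinements inside the $H_{n_1,\ldots,n_p}$ coming from analyticity), a selection lemma (Lemma \ref{lem 3.1}) extracts from any set with $L^s_\epsilon=\infty$ a finite family of pairwise \emph{disjoint} closed Bowen balls whose weighted sum $\sum_i e^{-s\Phi_{F_{n_i}}(x_i)}$ lies in a prescribed interval. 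One then builds nested finite sets $K_1,K_2,\ldots$ with measures $\mu_i=\sum_{x\in K_i}e^{-s\Phi_{F_{m_i(x)}}(x)}\delta_x$, the geometric control at each stage being carried by small balls $\overline B(x,\gamma_i)$ in the \emph{original} metric $d$, not by Bowen balls. This is precisely what neutralises your worry that ``Bowen balls of different generations overlap'': the disjointness arranged at stage $i$ gives $\overline B_{F_{m_i(x)}}(z,\epsilon)\cap \bigcup_{y\in K_i\setminus\{x\}}\overline B(y,\gamma_i)=\emptyset$ for all $z\in\overline B(x,\gamma_i)$, so the mass estimate $\widetilde\mu(\overline B_{F_{m_i(x)}}(z,\epsilon))\le Ce^{-s\Phi_{F_{m_i(x)}}(x)}$ follows directly from the nesting and the telescoping bound $C=\prod_n(1+2^{-n})$, with no group-theoretic combinatorics at all.
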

	
	\begin{theorem}\label{m4}
		Let  $\Phi: X\rightarrow \mathbb{R}$ be a  positive continuous function and  $\mu\in {M}(X)$. Then
		$$
		\begin{aligned}
			dim_{\mu}^{\widetilde{BSP}}(\{F_n\}_{n=1}^{\infty},\Phi)&=dim_{\mu}^{\widetilde{KP}}(\{F_n\}_{n=1}^{\infty},\Phi)\\
			&=  \inf \left\{dim_H^{\widetilde{BSP}}(\{F_n\}_{n=1}^{\infty},\Phi):  \mu(H)=1\right\}.
		\end{aligned}
		$$
	\end{theorem}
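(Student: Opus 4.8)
We sketch the proof, following the standard approach to inverse variational principles (cf.\ \cite{feng2012variational,wang2021some,zhong2023variational}) transferred to amenable group actions and to the weight $\Phi$. Abbreviate $A:=dim_{\mu}^{\widetilde{BSP}}(\{F_n\}_{n=1}^{\infty},\Phi)$, $B:=dim_{\mu}^{\widetilde{KP}}(\{F_n\}_{n=1}^{\infty},\Phi)$ and $C:=\inf\{dim_H^{\widetilde{BSP}}(\{F_n\}_{n=1}^{\infty},\Phi):\mu(H)=1\}$. Recall $A=\lim_{\epsilon\to0}dim_{\mu}^{\widetilde{BSP}}(\{F_n\}_{n=1}^{\infty},\Phi,\epsilon)$ (a monotone limit), with $dim_{\mu}^{\widetilde{BSP}}(\{F_n\}_{n=1}^{\infty},\Phi,\epsilon)$ the $\mu$-essential supremum over $x\in X$ of $\limsup_{n\to\infty}\bigl({-}\log\mu(B_{F_n}(x,\epsilon))\bigr)/(S_{F_n}\Phi)(x)$, where $B_{F_n}(x,\epsilon)$ is the Bowen ball over $F_n$ and $(S_{F_n}\Phi)(x)=\sum_{g\in F_n}\Phi(gx)$; in particular $A$ is the $\mu$-essential supremum of the pointwise upper BS-dimension $x\mapsto\overline d_\mu(x):=\lim_{\epsilon\to0}\limsup_n\bigl({-}\log\mu(B_{F_n}(x,\epsilon))\bigr)/(S_{F_n}\Phi)(x)$. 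The plan is to prove the chain $B\le C$, $C\le A$, $A\le B$; then the three quantities coincide.

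The inequality $B\le C$ holds because the Katok quantity $B$ is built from sets of $\mu$-measure $\ge1-\delta$ with $\delta\to0$, a constraint weaker than full measure; equivalently it follows from the easy half of \cref{m3}, namely $dim_H^{\widetilde{BSP}}\ge dim_\mu^{\widetilde{KP}}$ whenever $\mu(H)=1$ (pass first to a Borel, hence analytic, full-measure subset and use monotonicity of BS packing dimension), so that the infimum defining $C$ is $\ge B$.

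For $C\le A$ we carry out the construction. Fix $s>A$. For each $k\ge1$ one has $dim_{\mu}^{\widetilde{BSP}}(\{F_n\}_{n=1}^{\infty},\Phi,1/k)\le A<s$, hence $\limsup_{n}\bigl({-}\log\mu(B_{F_n}(x,1/k))\bigr)/(S_{F_n}\Phi)(x)<s$ for $\mu$-almost every $x$. Set
\[
H_{k,N}:=\Bigl\{x\in X:\ \mu\bigl(B_{F_n}(x,1/k)\bigr)\ge e^{-s\,(S_{F_n}\Phi)(x)}\ \text{for all }n\ge N\Bigr\},\qquad H:=\bigcap_{k\ge1}\bigcup_{N\ge1}H_{k,N},
\]
so that each $\bigcup_N H_{k,N}$ contains the full-measure set on which the above inequality holds, whence $\mu(H)=1$. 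Now fix $k,N$, fix $s'>s$, and put $c:=\min_X\Phi>0$ --- here positivity of $\Phi$ is used. Given any pairwise disjoint family $\{B_{F_{n_i}}(x_i,1/k)\}_i$ with $x_i\in H_{k,N}$ and $n_i\ge N'$ for some $N'\ge N$, disjointness yields $\sum_i\mu\bigl(B_{F_{n_i}}(x_i,1/k)\bigr)\le1$ while $x_i\in H_{k,N}$ yields $\mu\bigl(B_{F_{n_i}}(x_i,1/k)\bigr)\ge e^{-s\,(S_{F_{n_i}}\Phi)(x_i)}$; since $(S_{F_{n_i}}\Phi)(x_i)\ge c|F_{n_i}|\ge c|F_{N'}|$ (the sequence $|F_n|$ being increasing), we get
\[
\sum_i e^{-s'\,(S_{F_{n_i}}\Phi)(x_i)}\ \le\ e^{-(s'-s)c|F_{N'}|}\sum_i e^{-s\,(S_{F_{n_i}}\Phi)(x_i)}\ \le\ e^{-(s'-s)c|F_{N'}|}.
\]
Letting $N'\to\infty$ and using $|F_n|\to\infty$ makes the packing pre-measure of $H_{k,N}$ at scale $1/k$ and parameter $s'$ equal to $0$, so $dim_{H_{k,N}}^{\widetilde{BSP}}(\{F_n\}_{n=1}^{\infty},\Phi,1/k)\le s'$, and letting $s'\downarrow s$ gives $\le s$. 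Since the packing set function is an outer measure at each fixed scale, BS packing dimension at scale $1/k$ is monotone and countably stable, so from $H\subseteq\bigcup_N H_{k,N}$ we obtain $dim_H^{\widetilde{BSP}}(\{F_n\}_{n=1}^{\infty},\Phi,1/k)\le s$ for every $k$, and therefore $dim_H^{\widetilde{BSP}}(\{F_n\}_{n=1}^{\infty},\Phi)=\lim_{k\to\infty}dim_H^{\widetilde{BSP}}(\{F_n\}_{n=1}^{\infty},\Phi,1/k)\le s$. Thus $C\le s$ for every $s>A$, i.e.\ $C\le A$.

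Finally $A\le B$: given $t<A$, the set $G_t=\{x:\overline d_\mu(x)>t\}$ has positive $\mu$-measure, so for every $\delta<\mu(G_t)$ and every $Z$ with $\mu(Z)\ge1-\delta$ one has $\mu(Z\cap G_t)>0$ and hence $dim_Z^{\widetilde{BSP}}\ge dim_{Z\cap G_t}^{\widetilde{BSP}}\ge t$ by the mass-distribution (Frostman-type) lower bound for BS packing dimension applied to the normalization of $\mu|_{Z\cap G_t}$; letting $\delta\to0$ and then $t\to A$ gives $B\ge A$. This mass-distribution estimate --- which is also the engine behind the easy half of \cref{m3} --- is the only technically heavy ingredient: it rests on a Vitali / $5r$-type covering argument in the dynamical pseudometrics $d_{F_n}$, with the usual care over the scale parameter $\epsilon$. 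The construction yielding $C\le A$, although specific to this theorem, is comparatively routine; its one delicate feature is that $A$ is controlled at each fixed scale $1/k$ whereas $dim_H^{\widetilde{BSP}}$ is the limit as $\epsilon\to0$, which forces $H$ to be built so as to succeed at all scales simultaneously and then combined with countable stability at each fixed scale. The positivity hypothesis on $\Phi$ is essential and enters precisely through $S_{F_n}\Phi\ge c|F_n|$ with $c>0$: this is what converts the trivial disjointness bound $\sum_i\mu(\cdot)\le1$ into a pre-measure that actually decays to $0$, so it cannot be relaxed to non-negativity.
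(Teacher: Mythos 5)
Your argument is built on a mischaracterization of the left-hand side, and this creates a genuine gap. You ``recall'' that $dim_{\mu}^{\widetilde{BSP}}(\{F_n\}_{n=1}^{\infty},\Phi,\epsilon)$ is the $\mu$-essential supremum of $x\mapsto\limsup_n\frac{-\log\mu(B_{F_n}(x,\epsilon))}{\Phi_{F_n}(x)}$, but that is not the definition: by Definition \ref{def 5.5} it is $\lim_{\delta\to0}\inf\{\widetilde{\mathscr{L}}_{\epsilon}(H,\{F_n\}_{n=1}^{\infty},\Phi):\mu(H)\geq 1-\delta\}$, a set-function infimum with no pointwise local quantity in sight. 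The identification of these two objects is itself a nontrivial statement, essentially equivalent to the Billingsley-type Theorem \ref{billing-2}, and you nowhere prove it; your chain $B\le C\le A\le B$ therefore establishes the theorem only after that identification is supplied. Worse, the two places where you need it --- the step $A\le B$ via the ``mass-distribution (Frostman-type) lower bound'' and, implicitly, the passage from your essential-supremum $A$ back to the actual $dim_{\mu}^{\widetilde{BSP}}$ --- both invoke Theorem \ref{billing-2}(2), whose proof requires the F{\o}lner growth condition $\lim_n|F_n|/\log n=\infty$. Theorem \ref{m4} does not assume that condition (unlike \cref{m3}), so your proof only covers a restricted class of F{\o}lner sequences.

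The intended argument is much softer and avoids local quantities entirely: it transposes the proof of Theorem \ref{inverse} verbatim, replacing ${M}_{\epsilon}$ by $\widetilde{\mathscr{L}}_{\epsilon}$. One first shows $\lim_{\epsilon\to0}\inf\{\mathscr{L}_{\epsilon}(H,\{F_n\}_{n=1}^{\infty},\Phi):\mu(H)=1\}=dim_{\mu}^{\widetilde{BSP}}(\{F_n\}_{n=1}^{\infty},\Phi)$ by taking sets $H_k$ of measure $\geq 1-\frac1k$ realizing the infimum in Definition \ref{def 5.5}, forming $H=\bigcup_k H_k$ of full measure, and using the countable stability of $\mathscr{L}_{\epsilon}$ (it is built from an outer measure); then one passes from fixed $\epsilon$ to $dim_H^{\widetilde{BSP}}$ by a diagonal intersection $H=\bigcap_j H_{1/j}$ over scales $\epsilon=\frac1j$; the reverse inequality $dim_{\mu}^{\widetilde{KP}}\leq dim_H^{\widetilde{BSP}}$ for every full-measure $H$ is immediate from Proposition \ref{prop 2.10} and monotonicity. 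None of this needs a covering lemma, the local upper BS-dimension, or any growth hypothesis on $\{F_n\}$. Your construction of $H=\bigcap_k\bigcup_N H_{k,N}$ for $C\le A$ is a correct and reasonably efficient way to bound $C$ by the essential supremum, but to salvage the proof of Theorem \ref{m4} as stated you would need either to add the hypothesis $\lim_n|F_n|/\log n=\infty$ or to replace the essential-supremum scaffolding by the direct set-function argument above.
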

	
	This manuscript is organized as follows. In Section 2, we introduce the notions of measure-theoretic BS dimension, BS dimension  and BS dimension in the sense of Katok under amenable group actions. The proofs of main results related to BS packing dimension and related properties are given in Section 3 and Section 4. In Section 5 and 6, we turn to investigate the BS packing dimension, and we prove the variational principle, inverse variational principle and Billingsley’s type theorem for BS packing dimension under amenable group actions.

	\section{BS dimension under amenable group actions}\label{Preliminary}
	In this section, we introduce some notions to appear in this manuscript.
	Let  $\mathcal{C}(X, \mathbb{R})$  denote the Banach space of all continuous real-valued functions on  $X $ equipped with the supremum norm, then for  $\Phi \in \mathcal{C}(X, \mathbb{R})$.

	Given $F\in \mathbb{F}(G)$, $x,y \in X$, the Bowen metric $d_F$  on $X$ is defined by  $d_F(x,y):=\max_{g \in F} \limits d(gx,gy).$ Then  \emph{Bowen open  ball} of radius $\epsilon$  in the metric $d_F$ around $x$ is   given by 
	$$B_F(x,\epsilon)=\{y\in X: d_F(x,y)<\epsilon\},$$
	and \emph{Bowen closed  ball }is given by
	$$\overline B_F(x,\epsilon)=\{y\in X: d_F(x,y)\leq\epsilon\}.$$
	Given $\Phi\in C(X,\mathbb{R})$, where $C(X,\mathbb{R})$ denotes the set of all continuous functions, define
	\begin{align*}
		\Phi_F(x)&=\sum_{g \in F}\Phi(gx),\\
		\Phi_F(x,\epsilon)&=\sup_{y \in B_F(x,\epsilon)}\Phi_F(y),\\
		\overline \Phi_F(x,\epsilon)&=\sup_{y \in \overline B_F(x,\epsilon)}\Phi_F(y).
	\end{align*}

	\subsection{Pesin-Pitskel topological pressure of subsets}\cite{liu2025variational}
	Let  $H\subset X$ be a non-empty subset, $\epsilon>0$, $\Phi \in \mathcal{C}(X, \mathbb{R})$, $N\in \mathbb{N}$ , $s \in \mathbb{R}$. 
	Put
	$$M^B(N,s,\epsilon,H,\{F_n\}_{n=1}^{\infty},\Phi)=\inf\{\sum_i\limits  e^{-s\lvert F_{n_i} \rvert +\Phi_{F_{n_i}}(x_i)}\},$$
	where the infimum  is taken over all  finite or countable cover $\{B_{F_{n_i}}(x_i,\epsilon)\}_{i\in I}$ of $H$ with $n_i \geq N.$
	Since  $M^B(N,s,\epsilon,H,\{F_n\},\Phi)$ is non-decreasing when $N$ increases, the following limit exists.
	\begin{align*}
		M^B(s,\epsilon,H,\{F_n\}_{n=1}^{\infty},\Phi)&=\lim_{N\to \infty}M^B(N,s,\epsilon,H,\{F_n\}_{n=1}^{\infty},\Phi),\\
		P^B(\epsilon,H,\{F_n\}_{n=1}^{\infty},\Phi)&=\sup{\{s:M^B (s,\epsilon,H,\{F_n\}_{n=1}^{\infty},\Phi)=\infty\}}\\
		&=\inf{\{s:M^B (s,\epsilon,H,\{F_n\}_{n=1}^{\infty},\Phi)=0\}},\\
		P^B_H(\{F_n\}_{n=1}^{\infty},\Phi)&=\lim_{\epsilon \to 0}P^B(\epsilon,H,\{F_n\}_{n=1}^{\infty},\Phi).
	\end{align*}
	$P_H^B(\{F_n\}_{n=1}^{\infty},\Phi)$ is called {\it Pesin-Pitskel topological pressure} of the set  $H$ along  $\{F_n\}_{n=1}^{\infty}$ with respect to $\Phi.$

	\subsection{Packing topological pressure of subsets}\cite{ding2025packing}
	Let $H\subset X$ be a non-empty subset, $\epsilon>0$, $\Phi \in \mathcal{C}(X, \mathbb{R})$, $N\in \mathbb{N}$, $s \in \mathbb{R}$.
	Put
	$$M^P(N,s,\epsilon,H,\{F_n\}_{n=1}^{\infty},\Phi)=\sup\{\sum_i\limits  e^{-s\lvert F_{n_i} \rvert +\Phi_{F_{n_i}}(x_i)}\},$$
	where the supremum  is taken over all  finite or countable disjoint $\{\overline B_{F_{n_i}}(x_i,\epsilon)\}_{i\in I}$ with $n_i \geq N,~x_i \in H.$  Since $M^P(N,s,\epsilon,H,\{F_n\}_{n=1}^{\infty},\Phi)$ is decreasing when $N$ increases,
	the following limit exists.
	Set
	$$M^P(s,\epsilon,H,\{F_n\}_{n=1}^{\infty},\Phi)=\lim_{N\to \infty}M^P(N,s,\epsilon,H,\{F_n\}_{n=1}^{\infty},\Phi).$$
	Put
	\begin{align*}
		M^\mathcal{P} (s,\epsilon,H,\{F_n\}_{n=1}^{\infty},\Phi)&=\inf \{{\sum_{i=1}^{\infty}M^P(s,\epsilon,H_i,\{F_n\}_{n=1}^{\infty},\Phi):H \subset \cup_{i=1}^{\infty} H_i}\},\\
		P^P(\epsilon,H,\{F_n\}_{n=1}^{\infty},\Phi)&=\sup{\{s:M^\mathcal{P} (s,\epsilon,H,\{F_n\}_{n=1}^{\infty},\Phi)=\infty\}}\\
		&=\inf{\{s:M^\mathcal{P} (s,\epsilon,H,\{F_n\}_{n=1}^{\infty},\Phi)=0\}},\\
		P^P_H(\{F_n\}_{n=1}^{\infty},\Phi)&=\lim_{\epsilon \to 0}P^P(\epsilon,H,\{F_n\}_{n=1}^{\infty},\Phi).
	\end{align*}
	
	Since $P^P(\epsilon,H,\{F_n\}_{n=1}^{\infty},\Phi)$ is increasing when $\epsilon$ decreases, the above limit exists. Then we call $P^P_H(\{F_n\}_{n=1}^{\infty},\Phi)$ {\it packing topological pressure} of the set $H$ along $\{F_n\}_{n=1}^\infty$  with respect to $f.$
	When $\Phi=0$,  $P^P_H(\{F_n\}_{n=1}^{\infty},0)$ is packing topological entropy $h_{top}^{P}(H,\{F_n\}_{n=1}^{\infty})$ given by Dou, Zheng and Zhou \cite{dou2023packing}.

	\subsection{BS dimension of subsets}  
	In this subsection, we offer a notion of BS dimension for arbitrary subsets  as follows.
	Let  $H\subset X$ be a non-empty subset and $\Phi: X\rightarrow \mathbb{R}$ be a  positive continuous function. Given $\epsilon>0$, $N\in\mathbb{N}$, $s>0$, put
	$$M^{s}_{N,\epsilon}(H,\{F_n\}_{n=1}^{\infty},\Phi)=\inf\sum_{i\in I} \exp{[-s\sup_{y\in B_{F_{n_i}}(x_i,\epsilon)}\Phi_{F_{n_i}}(y)]},$$
	where the infimum is taken over all finite or countable covers $\{B_{F_{n_i}}(x_i,\epsilon)\}_{i\in I}$ of $H$ with $n_i\geq N$ and $x_i\in X$. 
	
	Then the limit $M_{\epsilon}^{s}(H,\{F_n\}_{n=1}^{\infty},\Phi)=\lim_{N\rightarrow \infty} M_{N,\epsilon}^{s}(H,\{F_n\}_{n=1}^{\infty},\Phi)$ exists since $M^{s}_{N,\epsilon}(H,\{F_n\}_{n=1}^{\infty},\Phi)$ is non-decreasing when $N$ increases. The quantity $M^s_{\epsilon}(H,\{F_n\}_{n=1}^{\infty},\Phi)$ has a critical value of parameter $s$ jumping from $\infty$ to $0$. The critical value is defined by 
	$$M_{\epsilon}(H,\{F_n\}_{n=1}^{\infty},\Phi)= \inf\{s:M^s_{\epsilon}(H,\{F_n\}_{n=1}^{\infty},\Phi)=0\}=\sup\{s:M^s_{\epsilon}(H,\{F_n\}_{n=1}^{\infty},\Phi)=\infty\}.$$

	\begin{definition}\label{bs}\cite[Definition 2.2]{liu2025variational}
		The {\it BS dimension  on the set $H$} along $\{F_n\}_{n=1}^\infty$ is defined by 
		$$dim_H^{\widetilde{BS}}(\{F_n\}_{n=1}^{\infty},\Phi)=\lim_{\epsilon\rightarrow 0} M_{\epsilon}(H,\{F_n\}_{n=1}^{\infty},\Phi).$$
		When $\Phi=1$, then $dim_H^{\widetilde{BS}}(\{F_n\}_{n=1}^\infty,1)$ is just the {\it Bowen’s topological entropy on the set $H$} in \cite{zheng2016bowen}.
	\end{definition}
	
	Notice that the  Definition \ref{bs} can be given in an alternative way. Given $B_{F_{n_i}}(x_i,\epsilon)$, we can replace $\sup_{y\in B_{F_{n_i}}(x_i,\epsilon)}\Phi_{n_i}(y)$ by $\Phi_{n_i}(x_i)$ in Definition \ref{bs} to give a new definition. We denote by $\widetilde{M}^{s}_{N,\epsilon}(H,\{F_n\}_{n=1}^{\infty},\Phi)$, $\widetilde{M}^{s}_{\epsilon}(H,\{F_n\}_{n=1}^{\infty},\Phi)$,  $\widetilde{M}_{\epsilon}(H,\{F_n\}_{n=1}^{\infty},\Phi)$ and $\widetilde{dim}_H^{\widetilde{BS}}(T,\{F_n\}_{n=1}^{\infty},\Phi)$ the new corresponding quantities of $M^{s}_{N,\epsilon}(H,\{F_n\}_{n=1}^{\infty},\Phi)$, $M^{s}_{\epsilon}(H,\{F_n\}_{n=1}^{\infty},\Phi)$, $M_{\epsilon}(H,\{F_n\}_{n=1}^{\infty},\Phi)$ and $dim_H^{\widetilde{BS}}(T,\{F_n\}_{n=1}^{\infty},\Phi)$, respectively. 
	\begin{proposition}\label{inter}
		Let  $H\subset X$ be a non-empty subset and $\Phi: X\rightarrow \mathbb{R}$ be a  positive continuous function. Then $$dim_H^{\widetilde{BS}}(\{F_n\}_{n=1}^{\infty},\Phi)=\widetilde{dim}_H^{\widetilde{BS}}(\{F_n\}_{n=1}^{\infty},\Phi)=\lim_{\epsilon\rightarrow 0}\widetilde{M}_{\epsilon}(H,\{F_n\}_{n=1}^{\infty},\Phi).$$
	\end{proposition}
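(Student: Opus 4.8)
The plan is to sandwich $M_{\epsilon}(H,\{F_n\}_{n=1}^{\infty},\Phi)$ between $\widetilde{M}_{\epsilon}(H,\{F_n\}_{n=1}^{\infty},\Phi)$ and a fixed multiple of it, and then let $\epsilon\to 0$. The second equality in the statement, $\widetilde{dim}_H^{\widetilde{BS}}(\{F_n\}_{n=1}^{\infty},\Phi)=\lim_{\epsilon\to 0}\widetilde{M}_{\epsilon}(H,\{F_n\}_{n=1}^{\infty},\Phi)$, holds by the very definition of the tilded quantities, so only the first equality requires an argument.

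First I would record the elementary comparison of the two ways of evaluating $\Phi_{F_n}$ on a Bowen ball. Put $a:=\min_{x\in X}\Phi(x)$, which is strictly positive because $\Phi$ is a positive continuous function on the compact space $X$, and for $\epsilon>0$ set $\mathrm{Var}(\Phi,\epsilon):=\sup\{|\Phi(x)-\Phi(y)|:d(x,y)\leq\epsilon\}$, so that $\mathrm{Var}(\Phi,\epsilon)\to 0$ as $\epsilon\to 0$ by uniform continuity of $\Phi$. If $y\in B_{F_n}(x,\epsilon)$ then $d(gx,gy)<\epsilon$ for every $g\in F_n$, whence $|\Phi_{F_n}(x)-\Phi_{F_n}(y)|\leq|F_n|\,\mathrm{Var}(\Phi,\epsilon)$; combining this with $\Phi_{F_n}(x)\geq a|F_n|$ gives
\begin{equation*}
	\Phi_{F_n}(x)\ \leq\ \sup_{y\in B_{F_n}(x,\epsilon)}\Phi_{F_n}(y)\ \leq\ c(\epsilon)\,\Phi_{F_n}(x),
\end{equation*}
where $c(\epsilon):=1+\mathrm{Var}(\Phi,\epsilon)/a$ satisfies $c(\epsilon)\geq 1$ and $c(\epsilon)\to 1$ as $\epsilon\to 0$.

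Next I would propagate this through the covering sums. Since $x_i\in B_{F_{n_i}}(x_i,\epsilon)$, the left-hand inequality gives $\exp[-s\sup_{y\in B_{F_{n_i}}(x_i,\epsilon)}\Phi_{F_{n_i}}(y)]\leq\exp[-s\Phi_{F_{n_i}}(x_i)]$ for $s>0$, so summing over any admissible cover of $H$ and taking the infimum yields $M^{s}_{N,\epsilon}(H,\{F_n\}_{n=1}^{\infty},\Phi)\leq\widetilde{M}^{s}_{N,\epsilon}(H,\{F_n\}_{n=1}^{\infty},\Phi)$, hence $M_{\epsilon}(H,\{F_n\}_{n=1}^{\infty},\Phi)\leq\widetilde{M}_{\epsilon}(H,\{F_n\}_{n=1}^{\infty},\Phi)$. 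The right-hand inequality gives $\exp[-s\sup_{y\in B_{F_{n_i}}(x_i,\epsilon)}\Phi_{F_{n_i}}(y)]\geq\exp[-sc(\epsilon)\Phi_{F_{n_i}}(x_i)]$, so $M^{s}_{N,\epsilon}(H,\{F_n\}_{n=1}^{\infty},\Phi)\geq\widetilde{M}^{sc(\epsilon)}_{N,\epsilon}(H,\{F_n\}_{n=1}^{\infty},\Phi)$, and after letting $N\to\infty$ and reading off the value of $s$ at which the function jumps from $\infty$ to $0$ one obtains $M_{\epsilon}(H,\{F_n\}_{n=1}^{\infty},\Phi)\geq\widetilde{M}_{\epsilon}(H,\{F_n\}_{n=1}^{\infty},\Phi)/c(\epsilon)$. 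Thus
\begin{equation*}
	\frac{\widetilde{M}_{\epsilon}(H,\{F_n\}_{n=1}^{\infty},\Phi)}{c(\epsilon)}\ \leq\ M_{\epsilon}(H,\{F_n\}_{n=1}^{\infty},\Phi)\ \leq\ \widetilde{M}_{\epsilon}(H,\{F_n\}_{n=1}^{\infty},\Phi),
\end{equation*}
and letting $\epsilon\to 0$, using $c(\epsilon)\to 1$ together with the existence of $\lim_{\epsilon\to 0}M_{\epsilon}$ and $\lim_{\epsilon\to 0}\widetilde{M}_{\epsilon}$ (both monotone in $\epsilon$, and finite since $\Phi$ is positive), forces all three limits to coincide, which is exactly the assertion.

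The only genuinely delicate point --- and the reason the hypothesis requires $\Phi$ to be positive rather than merely continuous --- is that $|F_n|\to\infty$, so the naive additive discrepancy $|F_n|\,\mathrm{Var}(\Phi,\epsilon)$ between the sup-version and the point-version of $\Phi_{F_n}$ does not tend to $0$ on its own. The remedy is to absorb the factor $|F_n|$ into $\Phi_{F_n}(x)$ via the lower bound $\Phi\geq a>0$, turning the additive error into the multiplicative factor $c(\epsilon)\to 1$, and then to use that the critical exponent defining $M_{\epsilon}$ scales by $1/c(\epsilon)$ under the substitution $s\mapsto sc(\epsilon)$. Everything else --- passing to the infimum over covers, the limit $N\to\infty$, and the limit $\epsilon\to 0$ --- is routine and parallels the classical Bowen-equation computations.
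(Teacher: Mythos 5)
Your proposal is correct and follows essentially the same route as the paper: the easy inequality $M_{\epsilon}\leq\widetilde{M}_{\epsilon}$ from $\Phi_{F_n}(x)\leq\sup_{y\in B_{F_n}(x,\epsilon)}\Phi_{F_n}(y)$, and for the converse the same trick of converting the additive error $|F_n|\,\mathrm{Var}(\Phi,\epsilon)$ into a multiplicative factor via the lower bound $\min\Phi>0$ and then rescaling the exponent $s$ (the paper uses the factor $1-\Phi_{\epsilon}/\hat{\Phi}$ where you use $1/(1+\mathrm{Var}(\Phi,\epsilon)/a)$, both tending to $1$). The only cosmetic difference is that your multiplicative constant is always $\geq 1$, which avoids the paper's implicit need for $\epsilon$ small enough that $1-\Phi_{\epsilon}/\hat{\Phi}>0$; also note your parenthetical finiteness claim for the limits is unnecessary, since the sandwich forces equality even when both limits are $+\infty$.
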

	\begin{proof}
		It is clear that  $dim_H^{\widetilde{BS}}(\{F_n\}_{n=1}^{\infty},\Phi)\leq \widetilde{dim}_H^{\widetilde{BS}}(\{F_n\}_{n=1}^{\infty},\Phi).$
		
		Conversely, we shall prove that $dim_H^{\widetilde{BS}}(\{F_n\}_{n=1}^{\infty},\Phi)\geq \widetilde{dim}_H^{\widetilde{BS}}(\{F_n\}_{n=1}^{\infty},\Phi).$
		Given $s\geq 0$, select any finite or countable covers $\{B_{F_{n_i}}(x_i,\epsilon)\}_{i\in I}$ of $H$ with $n_i\geq N$ and $x_i\in X$.  For any $x,y\in X$, let   $\hat{\Phi}=\min_{x\in X}\Phi(x)$
		and $\Phi_{\epsilon}=\sup\{|\Phi(x)-\Phi(y)|: d(x,y)<2\epsilon\}.$ Notice that
		\begin{align}\label{equ2.1}
			\sup_{y\in B_{F_{n}}(x,\epsilon)}\Phi_{F_{n}}(y)&= \sup_{y\in B_{F_{n}}(x,\epsilon)}[\Phi_{F_{n}}(y)-\Phi_{F_{n}}(x)+\Phi_{F_{n}}(x)]\\
			& \leq \sup_{y\in B_{F_{n}}(x,\epsilon)}[|\Phi_{F_{n}}(y)-\Phi_{F_{n}}(x)|+\Phi_{F_{n}}(x)]\label{equ2.2}\\
			&\leq |F_{n}|\Phi_{\epsilon}+\Phi_{F_{n}}(x).\label{equ2.3}
		\end{align}
		Thus, for $s\geq 0$, we derive that 
		\begin{align*}
			\sum_{i\in I} \exp{[-s\Phi_{F_{n_i}}(x_i)]}&\leq \sum_{i\in I} \exp{[-s(\sup_{y\in B_{F_{n_i}}(x_i,\epsilon)}\Phi_{F_{n_i}}(y)-|F_{n_i}|\Phi_{\epsilon})]}\\&
			\leq \sum_{i\in I} \exp{[-s(\sup_{y\in B_{F_{n_i}}(x_i,\epsilon)}\Phi_{F_{n_i}}(y)-\frac{\sup_{y\in B_{F_{n_i}}(x_i,\epsilon)}\Phi_{F_{n_i}}(y)}{\hat{\Phi}}\Phi_{\epsilon}]}\\
			&=\sum_{i\in I} \exp{[-s\sup_{y\in B_{F_{n_i}}(x_i,\epsilon)}\Phi_{F_{n_i}}(y)(1-\frac{\Phi_{\epsilon}}{\hat{\Phi}})]}.  
		\end{align*}
		
		It follows that,
		$$\widetilde{M}^{s}_{N,\epsilon}(H,\{F_n\}_{n=1}^{\infty},\Phi)\leq M^{s(1-\frac{\Phi_{\epsilon}}{\hat{\Phi}})}_{N,\epsilon}(H,\{F_n\}_{n=1}^{\infty},\Phi).$$
		Letting $N\to \infty$, we have 
		$$\widetilde{M}^{s}_{\epsilon}(H,\{F_n\}_{n=1}^{\infty},\Phi)\leq M^{s(1-\frac{\Phi_{\epsilon}}{\hat{\Phi}})}_{\epsilon}(H,\{F_n\}_{n=1}^{\infty},\Phi).$$
		This implies that 
		$$(1-\frac{\Phi_{\epsilon}}{\hat{\Phi}})\widetilde{M}_{\epsilon}(H,\{F_n\}_{n=1}^{\infty},\Phi)\leq {M}_{\epsilon}(H,\{F_n\}_{n=1}^{\infty},\Phi).$$
		Taking $\epsilon\to 0$, one has $dim_H^{\widetilde{BS}}(\{F_n\}_{n=1}^{\infty},\Phi)\geq \widetilde{dim}_H^{\widetilde{BS}}(\{F_n\}_{n=1}^{\infty},\Phi).$

	\end{proof}

	By the theory of Carath{\'e}odory-Pesin structure, the proof of the  following proposition is standard, one can refer to \cite{pesin2008dimension}.
	\begin{proposition} \label{prop 2.21}	
		(1) If $H_1\subset H_2 \subset X$, then $$dim_{H_1}^{\widetilde{BS}}(\{F_n\}_{n=1}^{\infty},\Phi)\leq dim_{H_2}^{\widetilde{BS}}(\{F_n\}_{n=1}^{\infty},\Phi).$$
		
		(2) If $H=\cup_{i\geq 1}H_i$ is a union of  sets $H_i\subset X$, then $$dim_{H}^{\widetilde{BS}}(\{F_n\}_{n=1}^{\infty},\Phi)=\sup_{i\geq 1}dim_{H_i}^{\widetilde{BS}}(\{F_n\}_{n=1}^{\infty},\Phi).$$
		
	\end{proposition}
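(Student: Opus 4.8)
The plan is to run the standard Carathéodory--Pesin bookkeeping; the only real content is keeping track of the nested passages to the limit (the infimum over admissible covers, then $N\to\infty$, then $\epsilon\to 0$) and of the threshold in the parameter $s$. For part (1), I would start from the elementary observation that when $H_1\subset H_2$, every finite or countable cover $\{B_{F_{n_i}}(x_i,\epsilon)\}_{i\in I}$ of $H_2$ with $n_i\ge N$, $x_i\in X$ is automatically such a cover of $H_1$; hence the infimum defining $M^{s}_{N,\epsilon}(H_1,\{F_n\}_{n=1}^{\infty},\Phi)$ is taken over a (weakly) larger family, so $M^{s}_{N,\epsilon}(H_1,\{F_n\}_{n=1}^{\infty},\Phi)\le M^{s}_{N,\epsilon}(H_2,\{F_n\}_{n=1}^{\infty},\Phi)$. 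Letting $N\to\infty$ preserves the inequality for $M^{s}_{\epsilon}$, and since for fixed $\epsilon$ the map $s\mapsto M^{s}_{\epsilon}(\cdot,\{F_n\}_{n=1}^{\infty},\Phi)$ is non-increasing (positivity of $\Phi$ makes each summand $\exp\bigl[-s\,\Phi_{F_{n_i}}(x_i,\epsilon)\bigr]$ decreasing in $s$), with the jump from $\infty$ to $0$ recorded in the definition, we get $\{s:M^{s}_{\epsilon}(H_2,\ldots)=0\}\subseteq\{s:M^{s}_{\epsilon}(H_1,\ldots)=0\}$, hence $M_{\epsilon}(H_1,\{F_n\}_{n=1}^{\infty},\Phi)\le M_{\epsilon}(H_2,\{F_n\}_{n=1}^{\infty},\Phi)$; now let $\epsilon\to 0$.

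For part (2), the inequality $\sup_{i\ge 1}dim_{H_i}^{\widetilde{BS}}(\{F_n\}_{n=1}^{\infty},\Phi)\le dim_{H}^{\widetilde{BS}}(\{F_n\}_{n=1}^{\infty},\Phi)$ is immediate from part (1) because $H_i\subset H$ for all $i$. For the reverse inequality I would fix $\epsilon>0$ and any real $s>\sup_{i\ge 1}dim_{H_i}^{\widetilde{BS}}(\{F_n\}_{n=1}^{\infty},\Phi)$. Shrinking $\epsilon$ only enlarges $M^{s}_{N,\epsilon}$ (smaller Bowen balls and larger suprema of $\Phi_{F_n}$), so $\epsilon\mapsto M_{\epsilon}(H_i,\{F_n\}_{n=1}^{\infty},\Phi)$ is non-decreasing as $\epsilon\downarrow 0$ and therefore $M_{\epsilon}(H_i,\{F_n\}_{n=1}^{\infty},\Phi)\le dim_{H_i}^{\widetilde{BS}}(\{F_n\}_{n=1}^{\infty},\Phi)<s$ for every $i$; by monotonicity of $s\mapsto M^{s}_{\epsilon}$ this forces $M^{s}_{\epsilon}(H_i,\{F_n\}_{n=1}^{\infty},\Phi)=0$, and since $N\mapsto M^{s}_{N,\epsilon}(H_i,\ldots)$ is non-negative, non-decreasing and tends to $0$, in fact $M^{s}_{N,\epsilon}(H_i,\ldots)=0$ for every $N$.

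The last step is to glue covers: given $\delta>0$ and $N\in\mathbb{N}$, for each $i$ I pick a finite or countable cover $\{B_{F_{n^{(i)}_j}}(x^{(i)}_j,\epsilon)\}_{j}$ of $H_i$ with all indices $n^{(i)}_j\ge N$ and total weight $\sum_{j}\exp\bigl[-s\,\Phi_{F_{n^{(i)}_j}}(x^{(i)}_j,\epsilon)\bigr]<\delta\,2^{-i}$. The countable family obtained by collecting all these balls covers $H=\bigcup_{i\ge 1}H_i$, still uses only indices $\ge N$, and has total weight $<\sum_{i\ge 1}\delta\,2^{-i}=\delta$; hence $M^{s}_{N,\epsilon}(H,\{F_n\}_{n=1}^{\infty},\Phi)<\delta$. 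As $\delta>0$ and $N$ were arbitrary, $M^{s}_{\epsilon}(H,\{F_n\}_{n=1}^{\infty},\Phi)=0$, i.e. $M_{\epsilon}(H,\{F_n\}_{n=1}^{\infty},\Phi)\le s$; since this holds for every such $s$ and every $\epsilon>0$, letting $s$ decrease to $\sup_{i\ge 1}dim_{H_i}^{\widetilde{BS}}(\{F_n\}_{n=1}^{\infty},\Phi)$ and $\epsilon\to 0$ yields $dim_{H}^{\widetilde{BS}}(\{F_n\}_{n=1}^{\infty},\Phi)\le \sup_{i\ge 1}dim_{H_i}^{\widetilde{BS}}(\{F_n\}_{n=1}^{\infty},\Phi)$.

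I do not expect a genuine obstacle here; the points needing care are keeping the order of the three limiting operations straight and getting the direction of each monotonicity right (in $s$, in $N$, in $\epsilon$), so that the critical values $M_{\epsilon}$ and hence $dim^{\widetilde{BS}}$ inherit the expected behaviour, together with the small but essential remark that the union of covers of the pieces $H_i$ again consists only of Bowen balls with indices $\ge N$. Finally, the same two statements for the variant $\widetilde{dim}_{H}^{\widetilde{BS}}(\{F_n\}_{n=1}^{\infty},\Phi)$ follow immediately from Proposition \ref{inter}.
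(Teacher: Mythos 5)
Your proof is correct and is precisely the standard Carath\'eodory--Pesin argument that the paper invokes without writing out (it only points to Pesin's book), so you have simply supplied the details the authors omit: monotonicity of the outer measures under inclusion for (1), and for (2) the combination of $\epsilon$-monotonicity of the critical value (which is what lets you bound $M_{\epsilon}(H_i)$ by $dim_{H_i}^{\widetilde{BS}}(\{F_n\}_{n=1}^{\infty},\Phi)$ uniformly in $i$ at a fixed scale) with the $\delta 2^{-i}$ gluing of covers. One cosmetic slip: in the parenthetical justifying that shrinking $\epsilon$ enlarges $M^{s}_{N,\epsilon}$, smaller Bowen balls give \emph{smaller} suprema of $\Phi_{F_n}$ and hence \emph{larger} summands $\exp\bigl[-s\sup_{y\in B_{F_{n_i}}(x_i,\epsilon)}\Phi_{F_{n_i}}(y)\bigr]$; the monotonicity claim itself, and everything you deduce from it, is correct.
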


	Barreira and Schmeling \cite{barreira2000sets}  proved the BS dimension is the unique root of topological pressure function. Later,  Wang and Chen \cite{wang2012variational} extended this result to packing BS dimension.
	Recently, Liu and Peng proved that BS dimension is the unique root of Pesin–Pitskel topological pressure function under amenable group actions \cite{liu2025variational}.

	

	\begin{theorem}[Bowen’s equation] \label{bowen}\cite[Theorem 2.1]{liu2025variational}
		For any  positive continuous function  $\Phi: X \rightarrow \mathbb{R} $, we have  $dim_H^{\widetilde{BS}}(\{F_n\}_{n=1}^{\infty},\Phi)=t $, where  $t $ is the unique root of the equation  $P_{H}^{B}(\{F_n\}_{n=1}^{\infty}, -t \Phi)=0 .$
	\end{theorem}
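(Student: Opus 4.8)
The plan is to prove that $g(t) := P_H^B(\{F_n\}_{n=1}^{\infty}, -t\Phi)$ is a continuous, strictly decreasing function of $t$ possessing a unique zero, and that this zero is exactly $dim_H^{\widetilde{BS}}(\{F_n\}_{n=1}^{\infty},\Phi)$. Write $\hat\Phi := \min_{x\in X}\Phi(x)$, which is strictly positive since $\Phi$ is a positive continuous function on the compact space $X$, and $\check\Phi := \max_{x\in X}\Phi(x)$; thus $\hat\Phi|F|\le \Phi_F(x)\le\check\Phi|F|$ for every $F\in\mathbb{F}(G)$ and $x\in X$. For $t<t'$ and any admissible cover one has the termwise bounds $e^{-(s+(t'-t)\check\Phi)|F_{n_i}|-t\Phi_{F_{n_i}}(x_i)}\le e^{-s|F_{n_i}|-t'\Phi_{F_{n_i}}(x_i)}\le e^{-(s+(t'-t)\hat\Phi)|F_{n_i}|-t\Phi_{F_{n_i}}(x_i)}$, whence $M^B(s+(t'-t)\check\Phi,\epsilon,H,\{F_n\}_{n=1}^\infty,-t\Phi)\le M^B(s,\epsilon,H,\{F_n\}_{n=1}^\infty,-t'\Phi)\le M^B(s+(t'-t)\hat\Phi,\epsilon,H,\{F_n\}_{n=1}^\infty,-t\Phi)$. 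Passing to the critical values in $s$ and then letting $\epsilon\to0$ gives $(t'-t)\hat\Phi\le g(t)-g(t')\le(t'-t)\check\Phi$, so $g$ is Lipschitz and strictly decreasing. Since $g(0)=P_H^B(\{F_n\}_{n=1}^\infty,0)\ge0$ (the Bowen topological entropy of the nonempty set $H$) and $g(t)\le g(0)-t\hat\Phi\to-\infty$, $g$ has a unique root $t^*$, with $g(t)>0$ for $t<t^*$ and $g(t)<0$ for $t>t^*$. (If $g(0)=+\infty$ then both sides of the asserted identity are $+\infty$, so we may assume $g(0)<\infty$.)

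The key observation is a direct identification of defining quantities: for every $N$, $\epsilon>0$ and $t\ge0$, both $M^B(N,0,\epsilon,H,\{F_n\}_{n=1}^\infty,-t\Phi)$ and $\widetilde M^t_{N,\epsilon}(H,\{F_n\}_{n=1}^\infty,\Phi)$ equal $\inf\sum_{i}e^{-t\Phi_{F_{n_i}}(x_i)}$, where the infimum is over all finite or countable covers $\{B_{F_{n_i}}(x_i,\epsilon)\}_i$ of $H$ with $n_i\ge N$ and $x_i\in X$; indeed $(-t\Phi)_F(x)=-t\Phi_F(x)$, and the choice $s=0$ deletes the factor $e^{-s|F_{n_i}|}$ from the Pesin--Pitskel sum. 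Letting $N\to\infty$ gives $M^B(0,\epsilon,H,\{F_n\}_{n=1}^\infty,-t\Phi)=\widetilde M^t_{\epsilon}(H,\{F_n\}_{n=1}^\infty,\Phi)$ for all $\epsilon>0$ and $t\ge0$.

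Combining this with the sup/inf (Legendre-type) characterizations finishes the proof. Recall that $\widetilde M_{\epsilon}(H,\{F_n\}_{n=1}^\infty,\Phi)=\inf\{t:\widetilde M^t_{\epsilon}(H,\{F_n\}_{n=1}^\infty,\Phi)=0\}=\sup\{t:\widetilde M^t_{\epsilon}(H,\{F_n\}_{n=1}^\infty,\Phi)=\infty\}$. If $t>\widetilde M_{\epsilon}(H,\{F_n\}_{n=1}^\infty,\Phi)$ then $\widetilde M^t_{\epsilon}(H,\{F_n\}_{n=1}^\infty,\Phi)=0$, hence by the above $M^B(0,\epsilon,H,\{F_n\}_{n=1}^\infty,-t\Phi)=0$ and so $P^B(\epsilon,H,\{F_n\}_{n=1}^\infty,-t\Phi)\le0$; symmetrically, $t<\widetilde M_{\epsilon}(H,\{F_n\}_{n=1}^\infty,\Phi)$ forces $P^B(\epsilon,H,\{F_n\}_{n=1}^\infty,-t\Phi)\ge0$. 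Both $\widetilde M_{\epsilon}(H,\{F_n\}_{n=1}^\infty,\Phi)$ and $P^B(\epsilon,H,\{F_n\}_{n=1}^\infty,-t\Phi)$ are nondecreasing as $\epsilon\downarrow0$, with limits $dim_H^{\widetilde{BS}}(\{F_n\}_{n=1}^\infty,\Phi)$ (by Proposition~\ref{inter}) and $g(t)$ respectively. Therefore: if $g(t)>0$, some $P^B(\epsilon_0,H,\{F_n\}_{n=1}^\infty,-t\Phi)>0$, forcing $t\le\widetilde M_{\epsilon_0}(H,\{F_n\}_{n=1}^\infty,\Phi)\le dim_H^{\widetilde{BS}}(\{F_n\}_{n=1}^\infty,\Phi)$; if $g(t)<0$, then $P^B(\epsilon,H,\{F_n\}_{n=1}^\infty,-t\Phi)<0$ for all small $\epsilon$, forcing $t\ge\widetilde M_{\epsilon}(H,\{F_n\}_{n=1}^\infty,\Phi)$ for all small $\epsilon$ and hence $t\ge dim_H^{\widetilde{BS}}(\{F_n\}_{n=1}^\infty,\Phi)$. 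Applying the first to $t<t^*$ and letting $t\uparrow t^*$ gives $t^*\le dim_H^{\widetilde{BS}}(\{F_n\}_{n=1}^\infty,\Phi)$; applying the second to $t>t^*$ and letting $t\downarrow t^*$ gives the reverse inequality. Hence $dim_H^{\widetilde{BS}}(\{F_n\}_{n=1}^\infty,\Phi)=t^*$, the unique root of $P_H^B(\{F_n\}_{n=1}^\infty,-t\Phi)=0$.

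No single step is genuinely hard: the heart is the identification of defining quantities in the second paragraph, combined with careful handling of the two nested limits ($N\to\infty$, then $\epsilon\to0$) and of the critical-value characterizations. The one place where earlier machinery is essential is that the Pesin--Pitskel pressure is built from the center-point weights $\Phi_{F_{n_i}}(x_i)$ whereas the BS dimension is defined through the supremum-over-ball weights $\sup_{y\in B_{F_{n_i}}(x_i,\epsilon)}\Phi_{F_{n_i}}(y)$; Proposition~\ref{inter} bridges the two and lets us work with the center-point version $\widetilde M_\epsilon$ throughout the last paragraph. I expect the most tedious (though routine) point to be checking that $P^B(\epsilon,H,\{F_n\}_{n=1}^\infty,-t\Phi)$ is monotone in $\epsilon$, which is needed to identify its $\epsilon\to0$ limit with a supremum.
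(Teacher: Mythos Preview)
The paper does not actually prove this theorem: it is stated with the citation \cite[Theorem 2.1]{liu2025variational} and no proof is given (the analogous packing version, Theorem~\ref{packing}, is likewise left to the reader with a reference to \cite{wang2012variational}). So there is no ``paper's own proof'' to compare against.

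Your argument is correct and is precisely the standard route for Bowen's-equation results (as in \cite{barreira2000sets,wang2012variational}). The two essential ingredients are exactly the ones you isolate: (i) the Lipschitz/strict-monotonicity estimate $(t'-t)\hat\Phi\le g(t)-g(t')\le (t'-t)\check\Phi$, which follows termwise from $\hat\Phi|F|\le\Phi_F\le\check\Phi|F|$ and guarantees a unique root $t^*\ge 0$; and (ii) the identification $M^B(0,\epsilon,H,\{F_n\},-t\Phi)=\widetilde M^t_\epsilon(H,\{F_n\},\Phi)$, which together with the critical-value characterizations pins $t^*$ to the BS dimension. Your use of Proposition~\ref{inter} to pass between the center-point and sup-over-ball weights is exactly right and is the only place where a nontrivial preliminary result is needed. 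The monotonicity of $P^B(\epsilon,H,\{F_n\},-t\Phi)$ in $\epsilon$ that you flag as ``tedious'' is immediate: shrinking $\epsilon$ shrinks each Bowen ball, so any admissible cover at scale $\epsilon$ remains admissible at any larger scale with identical weights, whence $M^B(N,s,\epsilon,\cdot)$ is nonincreasing in $\epsilon$ and the critical value $P^B(\epsilon,\cdot)$ is nondecreasing as $\epsilon\downarrow 0$.
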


	\subsection{Local  measure-theoretic BS dimension}
	We denote the set of all Borel probability measures on $X$ by $M(X)$.
	Let $\Phi: X\rightarrow \mathbb{R}$ be a  positive continuous function, $\mu\in M(X)$, $\epsilon>0$ and $x\in X$, define
	$$\underline{P}_{\mu}^{\widetilde{BK}}(\{F_n\}_{n=1}^{\infty},\Phi,x)=\lim_{\epsilon\rightarrow 0}\liminf_{n\rightarrow \infty} \frac{-\log \mu(B_{F_{n}}(x,\epsilon))}{\Phi_{F_{n}}(x)},$$
	and
	$$\underline{P}_{\mu}^{\widetilde{BK}}(\{F_n\}_{n=1}^{\infty},\Phi)=\int\underline{P}_{\mu}^{\widetilde{BK}}(\{F_n\}_{n=1}^{\infty},\Phi,x)d\mu.$$
	We say $\underline{P}_{\mu}^{\widetilde{BK}}(T,\Phi)$ is  the {\it lower  measure-theoretic BS dimension of $\mu$} (see \cite[Definition 2.3]{liu2025variational}).

	Define
	$$\overline{P}_{\mu}^{\widetilde{BK}}(\{F_n\}_{n=1}^{\infty},\Phi,x)=\lim_{\epsilon\rightarrow 0}\limsup_{n\rightarrow \infty} \frac{-\log \mu(B_{F_{n}}(x,\epsilon))}{\Phi_{F_{n}}(x)},$$
	and
	$$\overline{P}_{\mu}^{\widetilde{BK}}(\{F_n\}_{n=1}^{\infty},\Phi)=\int\overline{P}_{\mu}^{\widetilde{BK}}(\{F_n\}_{n=1}^{\infty},\Phi,x)d\mu.$$
	We say $\underline{P}_{\mu}^{\widetilde{BK}}(T,\Phi)$ is  the {\it upper  measure-theoretic BS dimension of $\mu$} (see \cite[Definition 2.3]{liu2025variational}).

	\begin{remark}
		(1) When $\Phi=1$, then $\underline{P}_{\mu}^{\widetilde{BK}}(\{F_n\}_{n=1}^{\infty},1)$ is just the {\it measure-theoretical lower entropy of Borel probability measure $\mu$} in \cite{zheng2016bowen}; $\overline{P}_{\mu}^{\widetilde{BK}}(\{F_n\}_{n=1}^{\infty},1)$ is just the {\it measure-theoretical upper entropy of Borel probability measure $\mu$} given in \cite{dou2023packing}.

		(2) Notice that by  Monotone Convergence Theorem, we have 
		
		\begin{align*}
			\underline{P}_{\mu}^{\widetilde{BK}}(\{F_n\}_{n=1}^{\infty},\Phi)&=\int\underline{P}_{\mu}^{\widetilde{BK}}(\{F_n\}_{n=1}^{\infty},\Phi,x)d\mu\\
			&=\int \lim_{\epsilon\rightarrow 0}\liminf_{n\rightarrow \infty} \frac{-\log \mu(B_{F_{n}}(x,\epsilon))}{\Phi_{F_{n}}(x)} d\mu\\
			&=\lim_{\epsilon\rightarrow 0}\int \liminf_{n\rightarrow \infty} \frac{-\log \mu(B_{F_{n}}(x,\epsilon))}{\Phi_{F_{n}}(x)} d\mu,
		\end{align*}
		and
		\begin{align*}
			\overline{P}_{\mu}^{\widetilde{BK}}(\{F_n\}_{n=1}^{\infty},\Phi)&=\int\overline{P}_{\mu}^{\widetilde{BK}}(\{F_n\}_{n=1}^{\infty},\Phi,x)d\mu\\
			&=\int \lim_{\epsilon\rightarrow 0}\limsup_{n\rightarrow \infty} \frac{-\log \mu(B_{F_{n}}(x,\epsilon))}{\Phi_{F_{n}}(x)} d\mu\\
			&=\lim_{\epsilon\rightarrow 0}\int \limsup_{n\rightarrow \infty} \frac{-\log \mu(B_{F_{n}}(x,\epsilon))}{\Phi_{F_{n}}(x)} d\mu.
		\end{align*}
		Hence, we can define the above notions by an alternative way, put
		
		$$\underline{P}_{\mu}^{\widetilde{BK}}(\{F_n\}_{n=1}^{\infty},\Phi,\epsilon,x)=\liminf_{n\rightarrow \infty} \frac{-\log \mu(B_{F_{n}}(x,\epsilon))}{\Phi_{F_{n}}(x)},$$
		and
		$$\underline{P}_{\mu}^{\widetilde{BK}}(\{F_n\}_{n=1}^{\infty},\Phi,\epsilon)=\int\underline{P}_{\mu}^{\widetilde{BK}}(T,\Phi,\epsilon,x)d\mu.$$
		Then, the {\it lower  measure-theoretic BS dimension of $\mu$} is given by
		$$\underline{P}_{\mu}^{\widetilde{BK}}(\{F_n\}_{n=1}^{\infty},\Phi)=\lim_{\epsilon\rightarrow 0}\underline{P}_{\mu}^{\widetilde{BK}}(\{F_n\}_{n=1}^{\infty},\Phi,\epsilon).$$

		Let
		$$\overline{P}_{\mu}^{\widetilde{BK}}(\{F_n\}_{n=1}^{\infty},\Phi,\epsilon,x)=\limsup_{n\rightarrow \infty} \frac{-\log \mu(B_{F_{n}}(x,\epsilon))}{\Phi_{F_{n}}(x)},$$
		and
		$$\overline{P}_{\mu}^{\widetilde{BK}}(\{F_n\}_{n=1}^{\infty},\Phi,\epsilon)=\int\overline{P}_{\mu}^{\widetilde{BK}}(T,\Phi,\epsilon,x)d\mu.$$
		Then, the {\it upper  measure-theoretic BS dimension of $\mu$} is defined by
		$$\overline{P}_{\mu}^{\widetilde{BK}}(\{F_n\}_{n=1}^{\infty},\Phi)=\lim_{\epsilon\rightarrow 0}\overline{P}_{\mu}^{\widetilde{BK}}(\{F_n\}_{n=1}^{\infty},\Phi,\epsilon).$$
	\end{remark}

	\subsection{BS dimension in the sense of Katok}\label{subsec 2.6}
	Following the idea of Katok \cite{katok1980lyapunov}, Wang \cite{wang2021some}  established a variational principle for Bowen  topological entropy via introducing Bowen entropy of measures in the sense of Katok. Later, Zhong et al. \cite{zhong2023variational} generalized Wang's results to topological pressure. 
	Their works motivate us to  explore the variational principles of BS dimension via introducing the notion of BS dimension in the sense of Katok as follows.

	Given $\epsilon>0$, $s>0$, $N\in\mathbb{N}$, $\mu\in M(X)$, $\delta\in (0,1)$ and $\Phi: X\rightarrow \mathbb{R}$ be a  positive continuous function. Put 
	\begin{equation}\label{eq22}
		\Lambda^s_{N,\epsilon}(\mu,\{F_n\}_{n=1}^{\infty},\Phi,\delta)=\inf\sum_{i\in I}\exp{[- s\Phi_{n_i}(x_i)]},
	\end{equation}
	where the infimum is taken over all finite or countable covers $\{B_{F_{n_i}}(x_i,\epsilon)\}_{i\in I}$ so that $\mu(\bigcup_{i\in I}B_{F_{n_i}}(x_i,\epsilon))>1-\delta$ with $n_i\geq N$ and $x_i\in X$.
	
	Let $\Lambda^s_{\epsilon}(\mu,\{F_n\}_{n=1}^{\infty},\Phi,\delta)=\lim_{N\rightarrow \infty} \Lambda^s_{N,\epsilon}(\mu,\{F_n\}_{n=1}^{\infty},\Phi,\delta)$. There is a critical value of $s$ for $\Lambda^s_{\epsilon}(\mu,\{F_n\}_{n=1}^{\infty},\Phi,\delta)$ jumping from $\infty$ to $0$. Define the critical value as 
	\begin{align*}
		\Lambda_{\epsilon}(\mu,\{F_n\}_{n=1}^{\infty},\Phi,\delta)&=\inf\{s:\Lambda^s_{\epsilon}(\mu,\{F_n\}_{n=1}^{\infty},\Phi,\delta)=0\}\\
		&=\sup\{s:\Lambda^s_{\epsilon}(\mu,\{F_n\}_{n=1}^{\infty},\Phi,\delta)=\infty\}.
	\end{align*}
	Let $$dim_{\mu}^{\widetilde{K}}(\{F_n\}_{n=1}^{\infty},\Phi,\epsilon)=\lim_{\delta\rightarrow 0} \Lambda_{\epsilon}(\mu,\{F_n\}_{n=1}^{\infty},\Phi,\delta).$$ 
	\begin{definition}\label{def 2.6}
		The {\it BS dimension in the sense of Katok of $\mu$ along $\{F_n\}_{n=1}^\infty$} is defined as 
		$$dim_{\mu}^{\widetilde{K}}(\{F_n\}_{n=1}^{\infty},\Phi)=\lim_{\epsilon \rightarrow 0}dim_{\mu}^{\widetilde{K}}(\{F_n\}_{n=1}^{\infty},\Phi,\epsilon).$$
	\end{definition}
	
	\begin{definition}\label{def 2.7}
		Moreover, let
		$$dim_{\mu}^{\widetilde{BS}}(\{F_n\}_{n=1}^{\infty},\Phi,\epsilon)=\lim_{\delta\rightarrow 0} \inf\{\widetilde{M}_{\epsilon}(H,\{F_n\}_{n=1}^{\infty},\Phi):\mu(H)\geq 1-\delta\}.$$
		The {\it BS dimension of $\mu$ along $\{F_n\}_{n=1}^\infty$ }  is given by  
		$$dim_{\mu}^{\widetilde{BS}}(\{F_n\}_{n=1}^{\infty},\Phi)=\lim_{\epsilon\rightarrow 0}dim_{\mu}^{\widetilde{BS}}(\{F_n\}_{n=1}^{\infty},\Phi,\epsilon).$$
	\end{definition}
	
	\begin{remark}\label{rem2.8}
		With similar arguments in Propostion \ref{inter}, we have 
		
		(1)  $$dim_{\mu}^{\widetilde{BS}}(\{F_n\}_{n=1}^{\infty},\Phi)=\lim_{\epsilon\rightarrow 0}\lim_{\delta\rightarrow 0} \inf\{{M}_{\epsilon}(H,\{F_n\}_{n=1}^{\infty},\Phi):\mu(H)\geq 1-\delta\}.$$
		
		(2) if we replace $\Phi_{n_i}(x_i)$ by $\sup_{y\in B_{F_{n_i}}(x_i,\epsilon)}\Phi_{F_{n_i}}(y)$ in equation (\ref{eq22}), we can define new function $\widetilde{\Lambda}$.
		For any $\epsilon > 0$ and $0 <\delta<1$, we denote the  critical value by  $\widetilde{\Lambda}_{\epsilon}(\mu,\{F_n\}_{n=1}^{\infty},\Phi,\delta)$, then we have 
		$$dim_{\mu}^{\widetilde{K}}(\{F_n\}_{n=1}^{\infty},\Phi,\epsilon)=\lim_{\delta\rightarrow 0} \lim_{\delta\rightarrow 0} \widetilde{\Lambda}_{\epsilon}(\mu,\{F_n\}_{n=1}^{\infty},\Phi,\delta).$$ 
		
		(3) In fact, Definition \ref{def 2.6} and Definition \ref{def 2.7} are natural generalizations given in \cite{wang2021some}.
	\end{remark}
	
	The next result reveals  that the {\it BS dimension of $\mu$ along $\{F_n\}_{n=1}^\infty$}  is equal to \it BS dimension in the sense of Katok of $\mu$ along $\{F_n\}_{n=1}^\infty$.
	
	\begin{theorem}\label{equ} Let $\Phi: X\rightarrow \mathbb{R}$ be a  positive continuous function and $\mu\in M(X)$.
		For any $\epsilon>0$, then $dim_{\mu}^{\widetilde{K}}(\{F_n\}_{n=1}^{\infty},\Phi,\epsilon)=dim_{\mu}^{\widetilde{BS}}(\{F_n\}_{n=1}^{\infty},\Phi,\epsilon).$ Moreover, we have $dim_{\mu}^{\widetilde{BS}}(\{F_n\}_{n=1}^{\infty},\Phi)=dim_{\mu}^{\widetilde{K}}(\{F_n\}_{n=1}^{\infty},\Phi).$
	\end{theorem}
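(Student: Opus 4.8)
The plan is to compare, at each fixed scale $\epsilon>0$, the two covering schemes behind $dim_{\mu}^{\widetilde{K}}(\{F_n\}_{n=1}^{\infty},\Phi,\epsilon)$ and $dim_{\mu}^{\widetilde{BS}}(\{F_n\}_{n=1}^{\infty},\Phi,\epsilon)$. The only difference between the two is that the Katok-type quantity sums $\exp[-s\Phi_{F_{n_i}}(x_i)]$ over families of Bowen $\epsilon$-balls whose \emph{union} carries $\mu$-mass $>1-\delta$, whereas the measure-theoretic BS dimension first fixes a set $H$ with $\mu(H)\ge 1-\delta$ and then covers $H$ itself. Since a cover of a large set automatically has a large union, and conversely the union of an admissible Katok family is a large set covered by that same family, these two prescriptions yield the same critical exponent once $\delta\to 0$. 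The argument is soft and measure-theoretic in nature, in the spirit of Wang \cite{wang2021some} (see also \cite{zhong2023variational}); it uses neither amenability nor any feature of the F\o lner sequence. The hard part will be the reverse inequality, where one must manufacture a single ``large'' set $H$ out of a sequence of increasingly economical Katok covers by a $\limsup$-type construction.

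For $dim_{\mu}^{\widetilde{K}}(\{F_n\}_{n=1}^{\infty},\Phi,\epsilon)\le dim_{\mu}^{\widetilde{BS}}(\{F_n\}_{n=1}^{\infty},\Phi,\epsilon)$, fix $0<\delta_1<\delta_2<1$, $s>0$ and $N\in\mathbb{N}$. If $\mu(H)\ge 1-\delta_1$ and $\{B_{F_{n_i}}(x_i,\epsilon)\}_{i\in I}$ covers $H$ with $n_i\ge N$, then $\mu\big(\bigcup_i B_{F_{n_i}}(x_i,\epsilon)\big)\ge\mu(H)>1-\delta_2$, so this family is admissible in the definition of $\Lambda^s_{N,\epsilon}(\mu,\{F_n\}_{n=1}^{\infty},\Phi,\delta_2)$, whence $\Lambda^s_{N,\epsilon}(\mu,\{F_n\}_{n=1}^{\infty},\Phi,\delta_2)\le\widetilde{M}^s_{N,\epsilon}(H,\{F_n\}_{n=1}^{\infty},\Phi)$. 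Letting $N\to\infty$ with $H$ fixed and then passing to critical values in $s$ gives $\Lambda_{\epsilon}(\mu,\{F_n\}_{n=1}^{\infty},\Phi,\delta_2)\le\widetilde{M}_{\epsilon}(H,\{F_n\}_{n=1}^{\infty},\Phi)$; taking the infimum over all $H$ with $\mu(H)\ge 1-\delta_1$, and then letting $\delta_1\to 0$ with $\delta_2=2\delta_1$, yields the inequality (using that $\delta\mapsto\Lambda_{\epsilon}(\mu,\{F_n\}_{n=1}^{\infty},\Phi,\delta)$ is monotone, so its limit as $\delta\to 0$ exists and equals $dim_{\mu}^{\widetilde{K}}(\{F_n\}_{n=1}^{\infty},\Phi,\epsilon)$).

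For the reverse inequality, fix $\delta\in(0,1)$ and any $s>\Lambda_{\epsilon}(\mu,\{F_n\}_{n=1}^{\infty},\Phi,\delta)$, so that $\lim_{N\to\infty}\Lambda^s_{N,\epsilon}(\mu,\{F_n\}_{n=1}^{\infty},\Phi,\delta)=0$; as this quantity is non-decreasing in $N$ and non-negative, it equals $0$ for every $N$. Hence for each $k\in\mathbb{N}$ I can pick an admissible family $\mathcal{C}_k=\{B_{F_{n_i}}(x_i,\epsilon)\}_{i\in I_k}$ with all $n_i\ge k$, with $\mu(\bigcup\mathcal{C}_k)>1-\delta$, and with $\sum_{i\in I_k}\exp[-s\Phi_{F_{n_i}}(x_i)]<2^{-k}$. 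Set $H=\bigcap_{K\ge 1}\bigcup_{k\ge K}\bigcup\mathcal{C}_k$. By continuity of $\mu$ from above, $\mu(H)\ge 1-\delta$; and for each $K$ the family $\bigcup_{k\ge K}\mathcal{C}_k$ covers $H$ by Bowen $\epsilon$-balls with all indices $\ge K$ and total weight $\le\sum_{k\ge K}2^{-k}=2^{1-K}$, so $\widetilde{M}^s_{K,\epsilon}(H,\{F_n\}_{n=1}^{\infty},\Phi)\le 2^{1-K}$. Letting $K\to\infty$ gives $\widetilde{M}^s_{\epsilon}(H,\{F_n\}_{n=1}^{\infty},\Phi)=0$, so $\widetilde{M}_{\epsilon}(H,\{F_n\}_{n=1}^{\infty},\Phi)\le s$ and therefore $\inf\{\widetilde{M}_{\epsilon}(H',\{F_n\}_{n=1}^{\infty},\Phi):\mu(H')\ge 1-\delta\}\le s$. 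Letting $s\downarrow\Lambda_{\epsilon}(\mu,\{F_n\}_{n=1}^{\infty},\Phi,\delta)$ and then $\delta\to 0$ gives $dim_{\mu}^{\widetilde{BS}}(\{F_n\}_{n=1}^{\infty},\Phi,\epsilon)\le dim_{\mu}^{\widetilde{K}}(\{F_n\}_{n=1}^{\infty},\Phi,\epsilon)$. This proves the first assertion; the ``moreover'' follows immediately by letting $\epsilon\to 0$ and invoking Definitions \ref{def 2.6} and \ref{def 2.7}.

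Besides the $\limsup$ construction itself, the points needing care are: checking that $\mu(H)\ge 1-\delta$ survives the nested intersection (continuity from above, using that $\mu$ is a probability measure); verifying that the tails $\bigcup_{k\ge K}\mathcal{C}_k$ still cover $H$ with summable total weight; and the routine bookkeeping needed to reconcile the strict constraint $\mu(\bigcup\,\cdot\,)>1-\delta$ with the non-strict one $\mu(H)\ge 1-\delta$, and to interchange $N\to\infty$ with the extraction of critical exponents. I expect none of these to cause real trouble, the substance of the proof being the construction in the third paragraph.
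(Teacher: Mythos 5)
Your proof is correct and follows essentially the same route as the paper's: the easy inequality comes from observing that any cover of a set of measure at least $1-\delta$ is an admissible Katok family, and the reverse comes from assembling economical Katok covers at all scales $N$ into a single set of nearly full measure with small $\widetilde{M}^s_{N,\epsilon}$ for every $N$. The only (harmless) differences are that you keep $\delta$ fixed and use a $\limsup$ set where the paper lets $\delta_{N,m}\to 0$ to produce a full-measure set $H=\cap_N H_N$ directly, and that you treat the strict versus non-strict mass constraints ($>1-\delta$ versus $\geq 1-\delta$) more carefully than the paper does.
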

	
	\begin{proof}
		First, we  show that  $dim_{\mu}^{\widetilde{K}}(\{F_n\}_{n=1}^{\infty},\Phi,\epsilon) \leq dim_{\mu}^{\widetilde{BS}}(\{F_n\}_{n=1}^{\infty},\Phi,\epsilon)$. For any  $n \in \mathbb{N}, 0<\epsilon,0<\delta<1 $, and $H\subset X$  with  $\mu(H) \geq 1-\delta $, then one has
		$$
		\Lambda_{N,\epsilon}^s(\mu,\{F_n\}_{n=1}^{\infty},\Phi,\delta)\leq \widetilde{M}_{N,\epsilon}^s(H,\{F_n\}_{n=1}^{\infty},\Phi).
		$$
		
		Letting  $N \rightarrow \infty$,   we get  
		$$
		\Lambda_{\epsilon}^s(\mu,\{F_n\}_{n=1}^{\infty},\Phi,\delta)\leq \widetilde{M}_{\epsilon}^s(H,\{F_n\}_{n=1}^{\infty},\Phi).
		$$
		
		This indicates that
		$$
		\Lambda_{\epsilon}(\mu,\{F_n\}_{n=1}^{\infty},\Phi,\delta)\leq \widetilde{M}_{\epsilon}(H,\{F_n\}_{n=1}^{\infty},\Phi).
		$$
		and then 
		$$
		\Lambda_{\epsilon}(\mu,\{F_n\}_{n=1}^{\infty},\Phi,\delta) \leq \inf \left\{\widetilde{M}_{\epsilon}(H,\{F_n\}_{n=1}^{\infty},\Phi): \mu(H) \geq 1-\delta\right\} .
		$$
		Hence, taking $ \delta \rightarrow 0$, we deduce that  $$dim_{\mu}^{\widetilde{K}}(\{F_n\}_{n=1}^{\infty},\Phi,\epsilon) \leq dim_{\mu}^{\widetilde{BS}}(\{F_n\}_{n=1}^{\infty},\Phi,\epsilon).$$
		
		On the other hand, we shall prove  $dim_{\mu}^{\widetilde{K}}(\{F_n\}_{n=1}^{\infty},\Phi,\epsilon) \geq dim_{\mu}^{\widetilde{BS}}(\{F_n\}_{n=1}^{\infty},\Phi,\epsilon)$, set \\ $\zeta=dim_{\mu}^{\widetilde{K}}(\{F_n\}_{n=1}^{\infty},\Phi,\epsilon)$. For any  $s>0 $,   exists  $\delta_{s} $ so that
		$$
		\Lambda_{\epsilon}(\mu,\{F_n\}_{n=1}^{\infty},\Phi,\delta)<\zeta+s, \forall \delta<\delta_{s} .
		$$
		This yields that  $ \Lambda_{\epsilon}^{\zeta+s}(\mu,\{F_n\}_{n=1}^{\infty},\Phi,\delta)=0 $. For any  $N \in \mathbb{N} $, we can find a sequence of  $\delta_{N, m}$  with  $\lim \limits_{m \rightarrow \infty} \delta_{N, m}=0$  and   $\{B_{F_{n_i}}(x_{i}, \epsilon)\}_{i \in I_{N, m}}$  such that  $x_{i} \in X, n_{i} \geq N ,  \mu\left(\cup_{i \in I_{N, m}} B_{F_{n_i}}(x_{i}, \epsilon)\right) \geq 1-\delta_{N, m} $, and
		$$
		\sum_{i \in I_{N, m}} e^{-(\zeta+s)\Phi_{n_i}(x_i)} \leq \frac{1}{2^{m}} .
		$$
		
		Let
		$$
		H_{N}=\bigcup_{m \in \mathbb{N}} \bigcup_{i \in I_{N, m}} B_{F_{n_i}}(x_{i}, \epsilon).
		$$
		Then  $\mu\left(H_{N}\right)=1 $ and
		${M}_{N,\epsilon}^{\zeta+s}(H_{N},\{F_n\}_{n=1}^{\infty},\Phi)\leq1.$
		Let  $H=\cap_{N \in \mathbb{N}} H_{N} $. Thus  $\mu\left(H\right)=1$  and
		$$
		\widetilde{M}_{N,\epsilon}^{\zeta+s}(H,\{F_n\}_{n=1}^{\infty},\Phi) \leq \widetilde{M}_{N,\epsilon}^{\zeta+s}(H_{N},\{F_n\}_{n=1}^{\infty},\Phi) \leq 1, \forall N \in \mathbb{N}.
		$$
		This indicates that
		$$
		\widetilde{M}_{\epsilon}(H,\{F_n\}_{n=1}^{\infty},\Phi) \leq \zeta+s.
		$$
		
		Thus,
		$$
		dim_{\mu}^{\widetilde{BS}}(\{F_n\}_{n=1}^{\infty},\Phi,\epsilon)=\lim_{\delta\rightarrow 0} \inf\{\widetilde{M}_{\epsilon}(H,\{F_n\}_{n=1}^{\infty},\Phi):\mu(H)\geq 1-\delta\} \leq \zeta+s.
		$$
		Since $ s$ is arbitrary, then we have $$dim_{\mu}^{\widetilde{BS}}(\{F_n\}_{n=1}^{\infty},\Phi,\epsilon) \leq \zeta=dim_{\mu}^{\widetilde{K}}(\{F_n\}_{n=1}^{\infty},\Phi,\epsilon) .$$ 
		
		Therefore,  we have 
		$$dim_{\mu}^{\widetilde{BS}}(\{F_n\}_{n=1}^{\infty},\Phi,\epsilon) =dim_{\mu}^{\widetilde{K}}(\{F_n\}_{n=1}^{\infty},\Phi,\epsilon)$$ 
		and $$dim_{\mu}^{\widetilde{BS}}(\{F_n\}_{n=1}^{\infty},\Phi)=dim_{\mu}^{\widetilde{K}}(\{F_n\}_{n=1}^{\infty},\Phi).$$
		
	\end{proof}

	Next, we show that  the BS dimension in the sense of Katok along $\{F_n\}_{n=1}^\infty$ is a upper bound of lower  measure-theoretic BS dimension along $\{F_n\}_{n=1}^\infty$. Moreover, by Theorem \ref{equ}, we also have that BS dimension of Borel probability measure is also a upper bound of lower  measure-theoretic BS dimension.
	\begin{proposition}\label{prop 2.5}
		Let   $\Phi: X\rightarrow \mathbb{R}$ be a  positive continuous function and  $\mu \in {M}(X)$.  Then  $$\underline{P}_{\mu}^{\widetilde{BK}}(\{F_n\}_{n=1}^{\infty},\Phi)\leq dim_{\mu}^{\widetilde{K}}(\{F_n\}_{n=1}^{\infty},\Phi).$$
	\end{proposition}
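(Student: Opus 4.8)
The plan is to establish the formally stronger inequality
$$\operatorname*{ess\,sup}_{x\in X}\ \lim_{\epsilon\to0}\liminf_{n\to\infty}\frac{-\log\mu(B_{F_n}(x,\epsilon))}{\Phi_{F_n}(x)}\ \le\ dim_{\mu}^{\widetilde{K}}(\{F_n\}_{n=1}^{\infty},\Phi),$$
which yields Proposition~\ref{prop 2.5} at once, since $\underline{P}_{\mu}^{\widetilde{BK}}(\{F_n\}_{n=1}^{\infty},\Phi)$ is by definition the $\mu$-integral of the same nonnegative measurable integrand, and $\int f\,d\mu\le\operatorname*{ess\,sup}f$ on the probability space $(X,\mu)$. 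By Theorem~\ref{equ} it is enough to bound $dim_{\mu}^{\widetilde{BS}}(\{F_n\}_{n=1}^{\infty},\Phi)$ from below, i.e.\ to work with the quantities $\widetilde{M}_{\epsilon}(H,\{F_n\}_{n=1}^{\infty},\Phi)$ entering Definition~\ref{def 2.7}.

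The heart of the matter is a \emph{mass distribution principle}: if $Z\subset X$ is Borel with $\mu(Z)>0$ and there are $\epsilon_0>0$ and $v\ge0$ with $\liminf_{n\to\infty}\frac{-\log\mu(B_{F_n}(z,\epsilon_0))}{\Phi_{F_n}(z)}\ge v$ for every $z\in Z$, then $\widetilde{M}_{\epsilon}(Z,\{F_n\}_{n=1}^{\infty},\Phi)\ge v\bigl(1-\omega(\epsilon)/\hat{\Phi}\bigr)$ for all $0<\epsilon\le\epsilon_0/2$, where $\hat{\Phi}=\min_X\Phi>0$ and $\omega(\epsilon)=\sup\{|\Phi(a)-\Phi(b)|:d(a,b)\le\epsilon\}\to0$ as $\epsilon\to0$. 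To prove it I would fix $\gamma\in(0,v)$; since the Borel sets $Z_N=\{z\in Z:-\log\mu(B_{F_n}(z,\epsilon_0))>(v-\gamma)\Phi_{F_n}(z)\text{ for all }n\ge N\}$ increase up to $Z$, fix $N_0$ with $\mu(Z_{N_0})>0$. Given any countable cover of $Z$ by balls $B_{F_{n_i}}(x_i,\epsilon)$ with $n_i\ge N_0$, discard those missing $Z_{N_0}$ and choose $y_i\in B_{F_{n_i}}(x_i,\epsilon)\cap Z_{N_0}$ for the rest; then $B_{F_{n_i}}(x_i,\epsilon)\subset B_{F_{n_i}}(y_i,2\epsilon)\subset B_{F_{n_i}}(y_i,\epsilon_0)$, so $\mu(B_{F_{n_i}}(x_i,\epsilon))<e^{-(v-\gamma)\Phi_{F_{n_i}}(y_i)}\le e^{-(v-\gamma)(1-\omega(\epsilon)/\hat{\Phi})\Phi_{F_{n_i}}(x_i)}$, the last step using $|F_{n_i}|\le\Phi_{F_{n_i}}(x_i)/\hat{\Phi}$ together with the estimate (\ref{equ2.1})--(\ref{equ2.3}). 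Since the retained balls still cover $Z_{N_0}$, summing gives $\sum_i e^{-(v-\gamma)(1-\omega(\epsilon)/\hat{\Phi})\Phi_{F_{n_i}}(x_i)}\ge\mu(Z_{N_0})>0$; as this holds for all admissible covers and all $N\ge N_0$, the jump property of the critical exponent $\widetilde{M}_{\epsilon}$ forces $\widetilde{M}_{\epsilon}(Z,\{F_n\}_{n=1}^{\infty},\Phi)\ge(v-\gamma)(1-\omega(\epsilon)/\hat{\Phi})$, and $\gamma\to0$ finishes the claim.

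With the principle in hand, fix $\epsilon_0>0$ and any real $v$ for which $p:=\mu\bigl(\{x\in X:\liminf_{n\to\infty}\tfrac{-\log\mu(B_{F_n}(x,\epsilon_0))}{\Phi_{F_n}(x)}\ge v\}\bigr)>0$. For every $\delta<p$ and every Borel $H$ with $\mu(H)\ge1-\delta$, the set $Z=H\cap\{x:\liminf_n(\cdots)\ge v\}$ has $\mu(Z)\ge p-\delta>0$, so by the principle and the monotonicity in Proposition~\ref{prop 2.21}, $\widetilde{M}_{\epsilon}(H,\{F_n\}_{n=1}^{\infty},\Phi)\ge\widetilde{M}_{\epsilon}(Z,\{F_n\}_{n=1}^{\infty},\Phi)\ge v(1-\omega(\epsilon)/\hat{\Phi})$ whenever $\epsilon\le\epsilon_0/2$. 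Passing to the infimum over such $H$, then $\delta\to0$, then $\epsilon\to0$, Definition~\ref{def 2.7} gives $dim_{\mu}^{\widetilde{BS}}(\{F_n\}_{n=1}^{\infty},\Phi)\ge v$; letting $v$ range over all values with $p>0$ this reads $dim_{\mu}^{\widetilde{BS}}(\{F_n\}_{n=1}^{\infty},\Phi)\ge\operatorname*{ess\,sup}_{x}\liminf_{n}\tfrac{-\log\mu(B_{F_n}(x,\epsilon_0))}{\Phi_{F_n}(x)}$. Finally, letting $\epsilon_0\downarrow0$ along a sequence, the integrand increases pointwise to $\underline{P}_{\mu}^{\widetilde{BK}}(\{F_n\}_{n=1}^{\infty},\Phi,x)$ and the essential supremum commutes with a countable increasing supremum, so $dim_{\mu}^{\widetilde{BS}}(\{F_n\}_{n=1}^{\infty},\Phi)\ge\operatorname*{ess\,sup}_{x}\underline{P}_{\mu}^{\widetilde{BK}}(\{F_n\}_{n=1}^{\infty},\Phi,x)\ge\underline{P}_{\mu}^{\widetilde{BK}}(\{F_n\}_{n=1}^{\infty},\Phi)$; Theorem~\ref{equ} then yields the asserted inequality.

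The one genuinely delicate point is the replacement, inside the mass distribution principle, of a covering ball $B_{F_{n_i}}(x_i,\epsilon)$ — whose centre $x_i$ need not lie in the ``good'' set $Z_{N_0}$ — by a ball $B_{F_{n_i}}(y_i,\epsilon_0)$ centred at a point of $Z_{N_0}$: this is exactly where the constraint $\epsilon\le\epsilon_0/2$, the strict positivity of $\Phi$ (converting $|F_{n_i}|$ into $\Phi_{F_{n_i}}(x_i)/\hat{\Phi}$), and the uniform continuity of $\Phi$ (making $\omega(\epsilon)\to0$) are all used, precisely as in the estimate (\ref{equ2.1})--(\ref{equ2.3}) underlying Proposition~\ref{inter}. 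One should also record that $x\mapsto\mu(B_{F_n}(x,\epsilon_0))$ is lower semicontinuous (if $z\in B_{F_n}(x,\epsilon_0)$ then $z\in B_{F_n}(x',\epsilon_0)$ for $x'$ near $x$), so the level sets above are Borel; and in the borderline case $\operatorname*{ess\,sup}_x\underline{P}_{\mu}^{\widetilde{BK}}(\{F_n\}_{n=1}^{\infty},\Phi,x)=\infty$ the very same estimates force $dim_{\mu}^{\widetilde{K}}(\{F_n\}_{n=1}^{\infty},\Phi)=\infty$.
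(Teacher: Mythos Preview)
Your proof is correct and follows essentially the same mass distribution argument as the paper: pick a set of positive $\mu$-measure on which the lower local dimension exceeds a given threshold, recenter covering balls at points of that good set, and use the estimate $|\Phi_{F_n}(x)-\Phi_{F_n}(y)|\le |F_n|\,\omega(\epsilon)\le(\Phi_{F_n}(x)/\hat\Phi)\,\omega(\epsilon)$ to pass between centers. The differences are only in packaging: you route through $dim_\mu^{\widetilde{BS}}$ via Theorem~\ref{equ} and Definition~\ref{def 2.7}, whereas the paper works directly with the Katok-type quantity $\widetilde\Lambda_\epsilon(\mu,\ldots,\delta)$ of Remark~\ref{rem2.8}; and you establish the formally stronger bound $\operatorname*{ess\,sup}_x\underline{P}_\mu^{\widetilde{BK}}(\ldots,\Phi,x)\le dim_\mu^{\widetilde K}$, while the paper is content with the integral. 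Your correction factor $(1-\omega(\epsilon)/\hat\Phi)$ and the paper's $1/(1+\Phi_\epsilon/\hat\Phi)$ are interchangeable since both tend to $1$ as $\epsilon\to0$.
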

	\begin{proof}
		Assume that  $\underline{P}_{\mu}^{\widetilde{BK}}(\{F_n\}_{n=1}^{\infty},\Phi,2\epsilon)>0$.  We define 
		$$E_N=\{x\in X:  \mu (B_{F_{n}}(x, 2\epsilon))<\exp(-s\Phi_{F_{n}}(x))~ \text{for all}~n\geq N\}.$$
		Let  $s<\underline{P}_{\mu}^{\widetilde{BK}}(\{F_n\}_{n=1}^{\infty},\Phi,2\epsilon)$.  Then there  exists $N_0$  so that $\mu(E_{N_0})>0$. Fix  $\delta_0=\frac{1}{2}\mu(E_{N_0})>0$. Let $\{B_{F_{n_i}}(x_i,\epsilon)\}_{i\in I}$ be a  finite or countable cover  so  that  $\mu (\cup_{i\in I}B_{F_{n_i}}(x_i, \epsilon))> 1-\delta_0$  with $n_i \geq N_0,  x_i\in X$. Then $\mu(E_{N_0}\cap  \cup_{i\in I}B_{F_{n_i}}(x_i, \epsilon))\geq \frac{1}{2}\mu(E_{N_0})>0$. Denote by $I_1=\{i\in I: E_{N_0}\cap  B_{F_{n_i}}(x_i, \epsilon) \not =\emptyset\}$. For every $i \in I_1$, we choose $y_i \in E_{N_0}\cap  B_{F_{n_i}}(x_i, \epsilon)$ such that $$E_{N_0}\cap  B_{F_{n_i}}(x_i, \epsilon)\subset  B_{F_{n_i}}(y_i, 2\epsilon).$$ Notice that $ B_{F_{n_i}}(x_i, \epsilon)\subset  B_{F_{n_i}}(y_i, 2\epsilon).$
		Hence, by inequalities (\ref{equ2.1})-(\ref{equ2.3}), one has 
			\begin{align*}
			\sup_{y\in B_{F_{n}}(x,\epsilon)}\Phi_{F_{n}}(y)\leq |F_{n}|\Phi_{\epsilon}+\Phi_{F_{n}}(x)\leq \frac{\Phi_{F_{n}}(x)}{\hat{\Phi}}\Phi_{\epsilon}+\Phi_{F_{n}}(x)=(1+\frac{\Phi_{\epsilon}}{\hat{\Phi}})\Phi_{F_{n}}(x).
		\end{align*}
		Therefore,

		we get that 
		$$
		\begin{aligned}
			\sum_{i\in I}\exp{[- s\frac{1}{1+\frac{\Phi_{\epsilon}}{\hat{\Phi}}}\sup_{z\in B_{F_{n_i}}(x_i,\epsilon)}\Phi_{n_i}(z)]}&\geq \sum_{i\in I_1}\exp{[- s\frac{1}{1+\frac{\Phi_{\epsilon}}{\hat{\Phi}}}\sup_{z\in B_{F_{n_i}}(x_i,\epsilon)}\Phi_{F_{n_i}}(z)]}\\
			&\geq \sum_{i\in I_1}\exp{[- s\frac{1}{1+\frac{\Phi_{\epsilon}}{\hat{\Phi}}}\Phi_{F_{n_i}}(y_i)}(1+\frac{\Phi_{\epsilon}}{\hat{\Phi}})]\\
			&=\sum_{i\in I_1}\exp{[- s\Phi_{F_{n_i}}(y_i)}]\\
			&\geq \sum_{i\in I_1}  \mu(B_{F_{n_i}}(y_i, 2\epsilon))\\
			&\geq \frac{\mu(E_{N_0})}{2}>0,
		\end{aligned}
		$$
		Thus,  $\widetilde{\Lambda}_{\epsilon}^{s\frac{1}{1+\frac{\Phi_{\epsilon}}{\hat{\Phi}}}}(\mu, \{F_n\}_{n=1}^{\infty},\Phi, \delta_0)\geq\widetilde{\Lambda}_{N_0,\epsilon}^{s\frac{1}{1+\frac{\Phi_{\epsilon}}{\hat{\Phi}}}}(\mu,\{F_n\}_{n=1}^{\infty},\Phi, \delta_0)>0$ and hence $\widetilde{\Lambda}_{\epsilon}(\mu, \{F_n\}_{n=1}^{\infty},\Phi, \delta_0)\geq s\frac{1}{1+\frac{\Phi_{\epsilon}}{\hat{\Phi}}}.$ Consequently, $ dim_{\mu}^{\widetilde{K}}(\{F_n\}_{n=1}^{\infty},\Phi,\epsilon) \geq s\frac{1}{1+\frac{\Phi_{\epsilon}}{\hat{\Phi}}}$ by Remark \ref{rem2.8}. 
		
		Hence, taking $s \to  \underline{P}_{\mu}^{\widetilde{BK}}(\{F_n\}_{n=1}^{\infty},\Phi,2\epsilon)$ and $\epsilon\to 0$, we then obtain that $\underline{P}_{\mu}^{\widetilde{BK}}(\{F_n\}_{n=1}^{\infty},\Phi)\leq dim_{\mu}^{\widetilde{K}}(\{F_n\}_{n=1}^{\infty},\Phi)$. The proof is completed.
	\end{proof}

	\section{Variational principle and inverse variational principle for BS dimension}\label{proof}
	
	\subsection{ Weighted BS dimension}

	Let  $\Phi: X\rightarrow \mathbb{R}$ be a  positive continuous function and $g:X\rightarrow \mathbb{R}$ be a bounded real-valued function on $X$. Given $s>0,N\in\mathbb{N}$ and $\epsilon>0$. Define
	$$W^s_{N,\epsilon}(g,\{F_n\}_{n=1}^{\infty},\Phi)=\inf\sum_{i\in I}c_i \exp{[-s\sup_{y\in B_{F_{n_i}}(x_i,\epsilon)}\Phi_{F_{n_i}}(y)]},$$
	where the infimum is taken over all finite or countable families $\{(B_{F_{n_i}}(x_i,\epsilon),c_i)\}_{i\in I}$ with $0<c_i<\infty$, $x_i \in X$ and $n_i\geq N$ so that 
	$$\sum_{i\in I}c_i \chi_{B_{n_i}(x_i,\epsilon)}\geq g,$$ 
	where $\chi_A$ denotes the characteristic function of $A$.
	
	Let $H\subset X$ be a non-empty subset. Let $W^s_{N,\epsilon}(H,\{F_n\}_{n=1}^{\infty},\Phi)= W^s_{N,\epsilon}(\chi_H,\{F_n\}_{n=1}^{\infty},\Phi)$. Let $W^s_{\epsilon}(H,\{F_n\}_{n=1}^{\infty},\Phi)=\lim_{N\rightarrow \infty} W^s_{N,\epsilon}(H,\{F_n\}_{n=1}^{\infty},\Phi)$. There is a critical value of $s$ such that the quantity $W^s_{\epsilon}(H,\{F_n\}_{n=1}^{\infty},\Phi)$ jumps from $\infty$ to $0$. Denote this critical value as 
	$$W_{\epsilon}(H,\{F_n\}_{n=1}^{\infty},\Phi)=\inf\{s:W^s_{\epsilon}(H,\{F_n\}_{n=1}^{\infty},\Phi)=0\}=\sup\{s:W^s_{\epsilon}(H,\{F_n\}_{n=1}^{\infty},\Phi)=\infty\}.$$
	\begin{definition}\label{weighted}\cite{liu2025variational}
		The {\it  weighted BS dimension on the set $H$ along $\{F_n\}_{n=1}^{\infty}$} is defined as 
		$$dim_H^{\widetilde{BS,WB}}(\{F_n\}_{n=1}^{\infty},\Phi)=\lim_{\epsilon\rightarrow 0}  W_{\epsilon}(H,\{F_n\}_{n=1}^{\infty},\Phi).$$
	\end{definition}
	Analogous to Definition \ref{bs},  the notion of weighted BS dimension along $\{F_n\}_{n=1}^{\infty}$ can also be given in an alternative way. Given $B_{F_{n_i}}(x_i,\epsilon)$, we can replace $\sup_{y\in B_{F_{n_i}}(x_i,\epsilon)}\Phi_{F_{n_i}}(y)$ by $\Phi_{F_{n_i}}(x_i)$ in Definition \ref{weighted} to give a new definition. We denote by $\widetilde{W}^{s}_{N,\epsilon}(H,\{F_n\}_{n=1}^{\infty},\Phi)$, $\widetilde{W}_{\epsilon}(H,\{F_n\}_{n=1}^{\infty},\Phi)$ and $\widetilde{dim}_H^{\widetilde{BS,WB}}(\{F_n\}_{n=1}^{\infty},\Phi)$ the new corresponding quantities of $W^{s}_{N,\epsilon}(H,\{F_n\}_{n=1}^{\infty},\Phi)$, $W_{\epsilon}(H,\{F_n\}_{n=1}^{\infty},\Phi)$ and $dim_H^{\widetilde{BS,WB}}(\{F_n\}_{n=1}^{\infty},\Phi)$, respectively.

	\begin{proposition}\label{inter-1}
		Let   $H\subset X$ be a non-empty subset and $\Phi: X\rightarrow \mathbb{R}$ be a  positive continuous function. Then $${dim}_H^{\widetilde{BS,WB}}(\{F_n\}_{n=1}^{\infty},\Phi)=\widetilde{dim}_H^{\widetilde{BS,WB}}(\{F_n\}_{n=1}^{\infty},\Phi).$$
	\end{proposition}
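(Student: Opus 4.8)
The plan is to transcribe the proof of Proposition~\ref{inter} to the weighted setting, the only difference being that a plain cover is replaced by a weighted family $\{(B_{F_{n_i}}(x_i,\epsilon),c_i)\}_{i\in I}$. One inequality is immediate. Since $\sup_{y\in B_{F_{n_i}}(x_i,\epsilon)}\Phi_{F_{n_i}}(y)\ge\Phi_{F_{n_i}}(x_i)$, for every admissible family and every $s>0$ we have
\[
\sum_{i\in I}c_i\exp\bigl[-s\sup_{y\in B_{F_{n_i}}(x_i,\epsilon)}\Phi_{F_{n_i}}(y)\bigr]\le\sum_{i\in I}c_i\exp\bigl[-s\Phi_{F_{n_i}}(x_i)\bigr],
\]
and since the admissibility constraint $\sum_{i\in I}c_i\chi_{B_{n_i}(x_i,\epsilon)}\ge\chi_H$ is shared by $W$ and $\widetilde{W}$, taking the infimum over the common collection of families gives $W^s_{N,\epsilon}(H,\{F_n\}_{n=1}^{\infty},\Phi)\le\widetilde{W}^s_{N,\epsilon}(H,\{F_n\}_{n=1}^{\infty},\Phi)$, whence $W_\epsilon\le\widetilde{W}_\epsilon$ at every scale and, letting $\epsilon\to0$, $dim_H^{\widetilde{BS,WB}}(\{F_n\}_{n=1}^{\infty},\Phi)\le\widetilde{dim}_H^{\widetilde{BS,WB}}(\{F_n\}_{n=1}^{\infty},\Phi)$.

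For the reverse inequality I would reuse the two constants $\hat{\Phi}=\min_{x\in X}\Phi(x)>0$ and $\Phi_\epsilon=\sup\{|\Phi(x)-\Phi(y)|:d(x,y)<2\epsilon\}$ from the proof of Proposition~\ref{inter}. Combining the estimates (\ref{equ2.1})--(\ref{equ2.3}) with $|F_n|\hat{\Phi}\le\Phi_{F_n}(x)\le\sup_{y\in B_{F_n}(x,\epsilon)}\Phi_{F_n}(y)$ one obtains, for all $x\in X$ and all $n$,
\[
\Phi_{F_n}(x)\ \ge\ \Bigl(1-\frac{\Phi_\epsilon}{\hat{\Phi}}\Bigr)\sup_{y\in B_{F_n}(x,\epsilon)}\Phi_{F_n}(y).
\]
Substituting this into the weight sums, for any admissible family one has $\sum_{i\in I}c_i\exp[-s\Phi_{F_{n_i}}(x_i)]\le\sum_{i\in I}c_i\exp[-s(1-\Phi_\epsilon/\hat{\Phi})\sup_{y\in B_{F_{n_i}}(x_i,\epsilon)}\Phi_{F_{n_i}}(y)]$; taking infima over the same collection of covers and letting $N\to\infty$ yields $\widetilde{W}^s_\epsilon(H,\{F_n\}_{n=1}^{\infty},\Phi)\le W^{s(1-\Phi_\epsilon/\hat{\Phi})}_\epsilon(H,\{F_n\}_{n=1}^{\infty},\Phi)$. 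Comparing the critical exponents of these two $s$-parametrised quantities then gives $(1-\Phi_\epsilon/\hat{\Phi})\,\widetilde{W}_\epsilon(H,\{F_n\}_{n=1}^{\infty},\Phi)\le W_\epsilon(H,\{F_n\}_{n=1}^{\infty},\Phi)$, and since $\Phi_\epsilon\to0$ as $\epsilon\to0$ by uniform continuity of $\Phi$ on the compact space $X$, letting $\epsilon\to0$ delivers $\widetilde{dim}_H^{\widetilde{BS,WB}}(\{F_n\}_{n=1}^{\infty},\Phi)\le dim_H^{\widetilde{BS,WB}}(\{F_n\}_{n=1}^{\infty},\Phi)$.

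I do not expect any genuine obstacle: the scheme is identical to that of Proposition~\ref{inter}. The single point worth stating carefully is that the weighted admissibility constraint $\sum_{i\in I}c_i\chi_{B_{n_i}(x_i,\epsilon)}\ge\chi_H$ depends neither on $s$ nor on whether one uses $\Phi_{F_{n_i}}(x_i)$ or its supremum over $B_{F_{n_i}}(x_i,\epsilon)$, so the termwise inequality between the two weight sums legitimately descends to the infima, which range over precisely the same collection of families. The final step---promoting an inequality between $s$-indexed quantities to one between their critical values---is the routine ``jump from $\infty$ to $0$'' bookkeeping used several times in the excerpt.
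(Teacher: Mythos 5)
Your proposal is correct and follows exactly the route the paper intends: the paper omits the proof of Proposition~\ref{inter-1} with the remark that it is ``similar to Proposition~\ref{inter}'', and your argument is precisely that adaptation, with the weights $c_i>0$ passing harmlessly through the termwise estimates and the admissible families being identical for $W$ and $\widetilde{W}$. Nothing further is needed.
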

	\begin{proof}
		It is similar to Proposition \ref{inter}, here we omit it.
		
	\end{proof}

	The following covering lemma can help us to build the equivalence of the BS dimension and  the weighted BS dimension under amenable group actions.
	\begin{lemma}\cite{wang2021variational}\label{cover}
		Let $(X,d)$ be a compact metric space. Let $r>0$ and $\mathcal{B}=\{B(x_i,r)\}_{i\in I}$ be a family of open balls of $X$. Define 
		\begin{align*}
			I(i)=\{j\in I:B(x_j,r)\cap B(x_i,r)\not=\emptyset\}.
		\end{align*}
		Then there exists a finite subset $J\subset I$ such that for any $i,j\in J$ with $i\not =j$, $I(i)\cap I(j)=\emptyset$ and 
		$$\bigcup_{i\in I}B(x_i,r)\subset \bigcup_{j\in J}B(x_j,5r).$$
	\end{lemma}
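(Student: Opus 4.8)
The plan is to prove this as a Vitali-type $5r$-covering statement, exploiting that all the balls share the common radius $r$ and that $X$ is compact. The first thing I would record is a purely metric observation: if $d(x_i,x_j)\ge 4r$, then $I(i)\cap I(j)=\emptyset$. Indeed, if some $k$ lay in $I(i)\cap I(j)$, then $B(x_k,r)$ would meet both $B(x_i,r)$ and $B(x_j,r)$, forcing $d(x_i,x_k)<2r$ and $d(x_j,x_k)<2r$, hence $d(x_i,x_j)<4r$ by the triangle inequality, a contradiction. So it suffices to select a finite set of indices whose centres are pairwise $4r$-separated.

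Next I would construct $J$ by a maximality argument. Call a subset $S\subset I$ \emph{admissible} if the centres $\{x_s:s\in S\}$ are pairwise at distance at least $4r$. Since $X$ is compact, it is totally bounded, so there is a finite number $N=N(X,r)$ (a $2r$-covering number of $X$ will do, each $2r$-ball containing at most one point of a $4r$-separated set) bounding the cardinality of every admissible $S$. Hence we may choose an admissible $J\subset I$ of maximal cardinality, and such a $J$ is automatically finite. By the previous paragraph, this $J$ already satisfies the required disjointness property $I(i)\cap I(j)=\emptyset$ for all distinct $i,j\in J$.

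It remains to verify the covering inclusion. I claim that for every $i\in I$ there is $j\in J$ with $d(x_i,x_j)<4r$: if $i\in J$ take $j=i$; if $i\notin J$ and $d(x_i,x_j)\ge 4r$ for all $j\in J$, then in particular $x_i\neq x_j$ for every $j\in J$, so $J\cup\{i\}$ would be admissible with strictly larger cardinality, contradicting maximality of $|J|$. Fixing such a $j$, for any $y\in B(x_i,r)$ we get $d(y,x_j)\le d(y,x_i)+d(x_i,x_j)<r+4r=5r$, i.e.\ $B(x_i,r)\subset B(x_j,5r)$. Taking the union over all $i\in I$ yields $\bigcup_{i\in I}B(x_i,r)\subset\bigcup_{j\in J}B(x_j,5r)$, which completes the proof.

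The argument is short, and I do not expect a genuine obstacle: the only step requiring any care is the finiteness of $J$, which is precisely where compactness of $X$ (through total boundedness) is used, upgrading the usual "maximal disjoint subfamily" selection—which would leave $J$ possibly infinite—to the finite $J$ demanded in the statement. Everything else reduces to triangle-inequality bookkeeping with the fixed radius $r$.
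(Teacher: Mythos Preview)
The paper does not supply a proof of this lemma; it is simply quoted from \cite{wang2021variational}. Your argument is a correct and self-contained Vitali-type $5r$-covering proof: the reduction of the disjointness condition $I(i)\cap I(j)=\emptyset$ to the metric condition $d(x_i,x_j)\ge 4r$ is valid, total boundedness of the compact space $X$ indeed bounds the cardinality of any $4r$-separated set of centres (so a maximal-cardinality admissible $J$ exists and is finite), and the maximality of $J$ forces every centre $x_i$ to lie within $4r$ of some $x_j$ with $j\in J$, whence $B(x_i,r)\subset B(x_j,5r)$. Nothing is missing.
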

	
	The proof of the following result is inspired by \cite{tang2015variational,feng2012variational,wang2012variational}. N It is worth noting that Liu and Peng also proved similar results in \cite[Proposition 4.1]{liu2025variational}.  Here, we provide  a simpler  proof by using Lemma \ref{cover}.
	\begin{proposition}\label{equality-1}
		Let   $H$ be a non-empty subset of $X$, $\Phi: X\rightarrow \mathbb{R}$ be a  positive continuous function and $\{F_n\}_{n=1}^{\infty}$ be a Følner  sequence satisfying $\lim\limits_{n\to \infty}\frac{|F_{n}|}{\log n}=\infty$. Then for  $\theta>0$, $s>0$ there exists $\epsilon_0>0$ such that for $0<\epsilon<\epsilon_0$, there exists $N_0\in\mathbb{N}$, such that for any $N\geq N_0$,
		$$\widetilde{M}^{s+\theta}_{N,5\epsilon}(H,\{F_n\}_{n=1}^{\infty},\Phi)\leq W^s_{N,\epsilon}(H,\{F_n\}_{n=1}^{\infty},\Phi)\leq M^s_{N,\epsilon}(H,\{F_n\}_{n=1}^{\infty},\Phi).$$
		Therefore, we have ${dim}_H^{\widetilde{BS}}(\{F_n\}_{n=1}^{\infty},\Phi)={dim}_H^{\widetilde{BS,WB}}(\{F_n\}_{n=1}^{\infty},\Phi)$. 
	\end{proposition}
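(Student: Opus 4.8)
The plan is to prove, for fixed $\theta,s>0$, the stated level-$N$ inequalities for all sufficiently small $\epsilon$ and all sufficiently large $N$, and then to pass to the critical values and to the limits $N\to\infty$, $\epsilon\to0$, $\theta\to0$, identifying the resulting quantities with $dim_H^{\widetilde{BS}}$ and $dim_H^{\widetilde{BS,WB}}$ via Propositions \ref{inter} and \ref{inter-1}.

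\emph{The right inequality} $W^s_{N,\epsilon}(H,\{F_n\}_{n=1}^{\infty},\Phi)\le M^s_{N,\epsilon}(H,\{F_n\}_{n=1}^{\infty},\Phi)$ is immediate: given a cover $\{B_{F_{n_i}}(x_i,\epsilon)\}_{i\in I}$ of $H$ with $n_i\ge N$, the family $\{(B_{F_{n_i}}(x_i,\epsilon),1)\}_{i\in I}$ is admissible for the weighted functional because $\sum_i\chi_{B_{F_{n_i}}(x_i,\epsilon)}\ge\chi_H$, and the two sums coincide; taking infima gives the claim.

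\emph{The left inequality} is the substance. Fix an admissible weighted family $\{(B_{F_{n_i}}(x_i,\epsilon),c_i)\}_{i\in I}$ for $W^s_{N,\epsilon}(H)$, so $\sum_i c_i\chi_{B_{F_{n_i}}(x_i,\epsilon)}\ge\chi_H$ with all $n_i\ge N$. Split by scale: put $I_t=\{i\in I:n_i=t\}$ and $b_t=c_N t^{-2}$ with $c_N=\big(\sum_{k\ge N}k^{-2}\big)^{-1}$, so $\sum_{t\ge N}b_t=1$. Then for every $x\in H$ one has $\sum_{t\ge N}\sum_{i\in I_t}c_i\chi_{B_{F_t}(x_i,\epsilon)}(x)\ge 1$, hence $x$ lies in
$$H_t:=\Big\{x\in H:\textstyle\sum_{i\in I_t}c_i\chi_{B_{F_t}(x_i,\epsilon)}(x)\ge b_t\Big\}$$
for at least one $t$, i.e. $H=\bigcup_{t\ge N}H_t$. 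For each $t$, apply the $5r$-covering Lemma \ref{cover} in the Bowen metric $d_{F_t}$ to the family $\{B_{F_t}(x_i,\epsilon):i\in I_t,\ B_{F_t}(x_i,\epsilon)\cap H_t\ne\emptyset\}$ to obtain a finite $J_t$ with the sets $I(i)$ $(i\in J_t)$ pairwise disjoint and $H_t\subset\bigcup_{j\in J_t}B_{F_t}(x_j,5\epsilon)$; thus $\{B_{F_t}(x_j,5\epsilon):t\ge N,\ j\in J_t\}$ is a cover of $H$ at scales $\ge N$. Choose $y_j\in B_{F_t}(x_j,\epsilon)\cap H_t$ (nonempty since $j\in J_t$); every ball $B_{F_t}(x_i,\epsilon)$ containing $y_j$ meets $B_{F_t}(x_j,\epsilon)$, so its index lies in $I(j)$, whence $\sum_{i\in I(j)}c_i\ge b_t$. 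Also $i\in I(j)$ forces $d_{F_t}(x_i,x_j)<2\epsilon$, and combining this with the elementary bounds (\ref{equ2.1})--(\ref{equ2.3}) gives $\Phi_{F_t}(x_j)\ge\sup_{y\in B_{F_t}(x_i,\epsilon)}\Phi_{F_t}(y)-2|F_t|\Phi_\epsilon$. Using also $\Phi_{F_t}(x_j)\ge|F_t|\hat\Phi$ (with $\hat\Phi=\min_X\Phi>0$) and $\sum_{i\in I(j)}c_i\ge b_t$ we get, for each $j\in J_t$,
$$e^{-(s+\theta)\Phi_{F_t}(x_j)}\le\frac{e^{(2s\Phi_\epsilon-\theta\hat\Phi)|F_t|}}{b_t}\sum_{i\in I(j)}c_i\,e^{-s\sup_{y\in B_{F_t}(x_i,\epsilon)}\Phi_{F_t}(y)},$$
and summing over $j\in J_t$ (the $I(j)$ being disjoint subsets of $I_t$) and over $t\ge N$,
$$\widetilde M^{s+\theta}_{N,5\epsilon}(H)\le\sum_{t\ge N}\frac{e^{(2s\Phi_\epsilon-\theta\hat\Phi)|F_t|}}{b_t}\sum_{i\in I_t}c_i\,e^{-s\sup_{y\in B_{F_t}(x_i,\epsilon)}\Phi_{F_t}(y)}.$$

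\emph{Killing the extra factors and concluding.} By uniform continuity of $\Phi$ choose $\epsilon_0$ with $2s\Phi_\epsilon\le\tfrac12\theta\hat\Phi$ for $0<\epsilon<\epsilon_0$, so the exponential in $t$ above is $\le e^{-\frac{\theta\hat\Phi}{2}|F_t|}$; since $|F_n|/\log n\to\infty$ there is $N_0$ (depending only on $\theta,\hat\Phi$ and $\{F_n\}$) with $t^{2}e^{-\frac{\theta\hat\Phi}{2}|F_t|}\le c_N$ for all $t\ge N\ge N_0$, hence $b_t^{-1}e^{-\frac{\theta\hat\Phi}{2}|F_t|}\le1$ for every $t\ge N$, and the last sum is $\le\sum_{i\in I}c_i e^{-s\sup_{y\in B_{F_{n_i}}(x_i,\epsilon)}\Phi_{F_{n_i}}(y)}$; taking the infimum over admissible families gives $\widetilde M^{s+\theta}_{N,5\epsilon}(H)\le W^s_{N,\epsilon}(H)$ for $N\ge N_0$. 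Letting $N\to\infty$ in the chain gives $\widetilde M^{s+\theta}_{5\epsilon}(H)\le W^s_\epsilon(H)\le M^s_\epsilon(H)$ for all $s>0$; since $M^s_\epsilon(H)=0\Rightarrow W^s_\epsilon(H)=0\Rightarrow \widetilde M^{s+\theta}_{5\epsilon}(H)=0$, comparison of critical values yields $\widetilde M_{5\epsilon}(H)-\theta\le W_\epsilon(H)\le M_\epsilon(H)$. Letting $\epsilon\to0$ and invoking Propositions \ref{inter} and \ref{inter-1} (so $\lim_{\epsilon\to0}\widetilde M_{5\epsilon}(H)=\lim_{\epsilon\to0}M_\epsilon(H)=dim_H^{\widetilde{BS}}(\{F_n\}_{n=1}^{\infty},\Phi)$ and $\lim_{\epsilon\to0}W_\epsilon(H)=dim_H^{\widetilde{BS,WB}}(\{F_n\}_{n=1}^{\infty},\Phi)$) we obtain $dim_H^{\widetilde{BS}}-\theta\le dim_H^{\widetilde{BS,WB}}\le dim_H^{\widetilde{BS}}$, and $\theta>0$ being arbitrary, the two dimensions agree. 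The expected main obstacle is precisely this last step of the left inequality: one must absorb simultaneously the metric distortion of $\Phi$ over Bowen $2\epsilon$-balls (handled by shrinking $\epsilon$ so $\Phi_\epsilon$ is small) and the purely combinatorial weight $b_t^{-1}\asymp t^{2}$ arising from decomposing $H$ over infinitely many scales $t\ge N$ — and it is exactly the growth hypothesis $|F_n|/\log n\to\infty$ that makes the latter possible.
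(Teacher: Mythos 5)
Your proof is correct and follows essentially the same route as the paper: the trivial right inequality, and for the left one the decomposition of $H$ into level sets $H_t$ over scales, the $5r$-covering Lemma \ref{cover} applied scale by scale, the bound $\sum_{i\in I(j)}c_i\ge b_t$ at a point $y_j\in B_{F_t}(x_j,\epsilon)\cap H_t$, and the absorption of the distortion term $|F_t|\Phi_\epsilon$ (via small $\epsilon$) and of the combinatorial factor $t^2$ (via $|F_n|/\log n\to\infty$) into the $\theta$-shift of the exponent. The only differences are cosmetic bookkeeping — you normalize the thresholds so that $\sum_{t\ge N}b_t=1$ where the paper uses thresholds $t/n^2$ and lets $t\to1$ — so no further comment is needed.
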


	\begin{proof}
		The inequality $W^s_{N,\epsilon}(H,\{F_n\}_{n=1}^{\infty},\Phi)\leq M^s_{N,\epsilon}(H,\{F_n\}_{n=1}^{\infty},\Phi)$ is directly from the definition. Next we show that $\widetilde{M}^{s+\theta}_{N,\frac{\epsilon}{2}}(H,\{F_n\}_{n=1}^{\infty},\Phi)\leq W^{s}_{N,\epsilon}(H,\{F_n\}_{n=1}^{\infty},\Phi)$.
		Let $\hat{\Phi}=\min_{x\in X}\Phi(x)$. Since  $\Phi: X\rightarrow \mathbb{R}$ is a  positive continuous function, then we have 
		$$\lim_{\epsilon \rightarrow 0}\limsup_{n\to \infty}\frac{\Phi_{F_n}(\epsilon)}{|F_n|}=0.$$
		where $\Phi_{F_{n}}(\epsilon)=\sup\{|\Phi_{F_{n}}(x)-\Phi_{F_{n}}(y)|:d_{F_n}(x,y)<2\epsilon\}.$
		Hence, for $\theta>0$, there exists $\epsilon_0>0$ such that for any $0<\epsilon<\epsilon_0$, there is $N_0$ such that for any $N>N_0$, we have 
		$$\Phi_{F_{n}}(\epsilon)<\frac{|F_n|\theta\hat{\Phi}}{4s}.$$
		
		Fix $N>N_0$ large enough such that  $\frac{n^2}{e^{|F_n|\theta\hat{\Phi}/2}}<1$ for all $n\geq N$. Let $t>0$ and $n\geq N$. Let $\{(B_{F_{n_i}}(x_i,\epsilon),c_i)\}_{i\in I}$ with $0<c_i<\infty, x_i\in X, n_i\geq N$ be a finite or countable family satisfying $\sum_{i\in I} c_i\chi_{B_{F_{n_i}}(x_i,\epsilon)}\geq \chi_{H}$. Define $I_n=\{i\in I:n_i=n\}$ for $n\geq N$. Define 
		\begin{align}
			H_{n,t}=\{z\in H:\sum_{i\in I_n}c_i\chi_{B_{F_{n}}(x_i,\epsilon)}(z)>t\}
		\end{align}
		and
		\begin{align}
			I_n^t=\{i\in I_n:B_{F_{n}}(x_i,\epsilon)\cap H_{n,t}\not=\emptyset\}.
		\end{align}
		It follows that $H_{n,t}\subset \bigcup_{i\in I_n^t} B_{F_{n}}(x_i,\epsilon)$. Let $\mathcal{B}=\{B_{F_{n}}(x_i,\epsilon)\}_{i\in I_n^t}$. By Lemma \ref{cover}, there exists a finite subset $J\subset I^t_n$ such that
		$$\bigcup_{i\in I_n^t} B_{F_{n}}(x_i,\epsilon)\subset \bigcup_{j\in J} B_{F_{n}}(x_j,5\epsilon)$$
		and for any $i,j\in J$ with $i\not= j$, one has $I_n^t(i)\cap I_n^t(j)=\emptyset$, where $$I_n^t(i)=\{j\in I_n^t:B_{F_{n}}(x_j,\epsilon)\cap B_{F_{n}}(x_i,\epsilon)\not=\emptyset\}.$$ For each $j\in J$, choose $y_j\in B_{F_{n}}(x_j,\epsilon)\cap H_{n,t}$. Then $\sum_{i\in I_n^t}c_i\chi_{B_{F_{n}}(x_i,\epsilon)}(y_j)>t$ and hence $\sum_{i\in I_n^t(j)} c_i>t$.

		For each $j\in J$ and $i\in I_n^t(j)$, there exists $z_i\in B_{F_{n}}(x_j,\epsilon)\cap B_{F_{n}}(x_i,\epsilon)$. Thus for each $j\in J$, 
		\begin{align*}
			\sum_{i\in I_n^t(j)} c_i\exp{[-s\Phi_{F_{n}}(z_i)]}&\geq\sum_{i\in I_n^t(j)} c_i\exp{[-s(\Phi_{F_{n}}(x_j)+\frac{|F_{n}|\theta\hat{\Phi}}{4s})]}\\
			&> t\exp{[-s(\Phi_{F_{n}}(x_j)+\frac{|F_{n}|\theta\hat{\Phi}}{4s})]}           \\
			&>t\exp{[-s(\Phi_{F_{n}}(x_j)+\frac{\Phi_{F_{n}}(x_j)}{\hat{\Phi}}\frac{\theta\hat{\Phi}}{4s})]}\\
			&=t\exp{[-s(1+\frac{\theta}{4s})(\Phi_{F_{n}}(x_j))]}.
		\end{align*}
		It follows that 
		\begin{align*}
			\sum_{j\in J}\exp{[-s(\Phi_{F_{n}}(x_j)+\frac{|F_{n}|\theta\hat{\Phi}}{4s})]}&<\frac{1}{t}\sum_{j\in J}\sum_{i\in I_n^t(j)} c_i\exp{[-s\Phi_{F_{n}}(z_i)]}\\
			&\leq \frac{1}{t}\sum_{i\in I_n^t} c_i\exp{[-s(\sup_{y\in B_{F_{n}}(x_i,\epsilon)}\Phi_{F_{n}}(y)-\frac{|F_{n}|\theta\hat{\Phi}}{4s})]}.
		\end{align*}
		This implies that 
		\begin{align*}
			\sum_{j\in J}\exp{[-s(\Phi_{F_{n}}(x_j)+\frac{|F_{n}|\theta\hat{\Phi}}{2s})]}
			\leq \frac{1}{t}\sum_{i\in I_n^t} c_i\exp{[-s(\sup_{y\in B_{F_{n}}(x_i,\epsilon)}\Phi_{F_{n}}(y))]}.
		\end{align*}
		Therefore,
		\begin{align*}
			\widetilde{M}^{s+\theta}_{N,5\epsilon}(H_{n,t},\{F_n\}_{n=1}^{\infty},\Phi)&\leq \sum_{j\in J} \exp{[-(s+\theta)\Phi_{F_{n}}(x_j)]}\\&
			\leq \frac{1}{n^2 t}\sum_{i \in I_n} c_i \exp{[-s\sup_{y\in B_{F_{n}}(x_i,\epsilon)}\Phi_{F_{n}}(y)]}.  
		\end{align*}
		
		By definition, $H=\bigcup_{n\geq N} H_{n,\frac{1}{n^2}t}$. Hence
		\begin{align*}
			\widetilde{M}_{N,5\epsilon}^{s+\theta}(H,\{F_n\}_{n=1}^{\infty},\Phi)&\leq \sum_{n\geq N} \widetilde{M}_{N,5\epsilon}^{s+\theta}(H_{n,\frac{1}{n^2}t},\{F_n\}_{n=1}^{\infty},\Phi)\\
			&\leq \frac{1}{t}\sum_{i\in I}c_i \exp{[-s\sup_{y\in B_{F_{n_i}}(x_i,\epsilon)}\Phi_{F_{n_i}}(y)]}.
		\end{align*}
		Let $t\rightarrow 1$. It follows that $\widetilde{M}^{s+\theta}_{N,\frac{\epsilon}{2}}(H,\Phi)\leq W^{s}_{N,\epsilon}(H,\Phi)$. Letting $N \to \infty$, we have $\widetilde{M}^{s+\theta}_{5\epsilon}(H,\Phi)\leq W^s_{\epsilon}(H,\Phi)\leq M^s_{\epsilon}(H,\Phi).$  
		
		It follows that $\widetilde{M}_{5\epsilon}(H,\{F_n\}_{n=1}^{\infty},\Phi)\leq W_{\epsilon}(H,\{F_n\}_{n=1}^{\infty},\Phi)+\theta\leq M_{\epsilon}(H,\{F_n\}_{n=1}^{\infty},\Phi).$ Taking $\epsilon \to 0$ tend to $0$, by Proposition \ref{inter} and with the  arbitrariness of $\theta>0$, we obtain that  ${dim}_H^{\widetilde{BS}}(\{F_n\}_{n=1}^{\infty},\Phi)={dim}_H^{\widetilde{BS,WB}}(\{F_n\}_{n=1}^{\infty},\Phi)$. 
	\end{proof}

	Now, we are ready to prove the variational principle, here we are inspired by \cite{zhong2023variational,liu2025variational}.
	\begin{theorem}\cite[Theorem 4.1]{liu2025variational}\label{thm3.5}
			Let  $\Phi: X\rightarrow \mathbb{R}$ be a  positive continuous function and  $H$ be a non-empty compact subset of $X$.  Then 
		\begin{align*}
			dim_H^{\widetilde{BS}}(\{F_n\}_{n=1}^{\infty},\Phi)=\sup\{\underline{P}_{\mu}^{\widetilde{BK}}(\{F_n\}_{n=1}^{\infty},\Phi):\mu\in {M}(X),\mu(H)=1\}.
		\end{align*}
	\end{theorem}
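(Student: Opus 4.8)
The plan is to establish the two inequalities separately: the lower bound for $dim_H^{\widetilde{BS}}(\{F_n\}_{n=1}^{\infty},\Phi)$ is essentially free from the material already assembled, while the upper bound carries the substantive content and rests on a Frostman‑type lemma for weighted covers.

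For the lower bound $dim_H^{\widetilde{BS}}(\{F_n\}_{n=1}^{\infty},\Phi)\ge\sup\{\underline{P}_{\mu}^{\widetilde{BK}}(\{F_n\}_{n=1}^{\infty},\Phi):\mu\in M(X),\ \mu(H)=1\}$, I would fix $\mu\in M(X)$ with $\mu(H)=1$ and simply chain known facts. Proposition~\ref{prop 2.5} and Theorem~\ref{equ} give
\[
\underline{P}_{\mu}^{\widetilde{BK}}(\{F_n\}_{n=1}^{\infty},\Phi)\le dim_{\mu}^{\widetilde{K}}(\{F_n\}_{n=1}^{\infty},\Phi)=dim_{\mu}^{\widetilde{BS}}(\{F_n\}_{n=1}^{\infty},\Phi).
\]
Since $\mu(H)=1\ge 1-\delta$ for every $\delta\in(0,1)$, the set $H$ itself is admissible in the infimum defining $dim_{\mu}^{\widetilde{BS}}(\{F_n\}_{n=1}^{\infty},\Phi,\epsilon)$ in Definition~\ref{def 2.7}, so $dim_{\mu}^{\widetilde{BS}}(\{F_n\}_{n=1}^{\infty},\Phi,\epsilon)\le\widetilde{M}_{\epsilon}(H,\{F_n\}_{n=1}^{\infty},\Phi)$; letting $\epsilon\to0$ and invoking Proposition~\ref{inter} yields $dim_{\mu}^{\widetilde{BS}}(\{F_n\}_{n=1}^{\infty},\Phi)\le dim_H^{\widetilde{BS}}(\{F_n\}_{n=1}^{\infty},\Phi)$. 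Taking the supremum over admissible $\mu$ finishes this direction.

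For the upper bound I would proceed as follows. We may assume $s_0:=dim_H^{\widetilde{BS}}(\{F_n\}_{n=1}^{\infty},\Phi)>0$ (if $s_0=0$ the previous paragraph already forces equality, because $H$ compact and nonempty makes $\{\mu:\mu(H)=1\}$ nonempty and every $\underline{P}_\mu^{\widetilde{BK}}$ is $\ge0$), and fix $0<s<s_0$. By Proposition~\ref{equality-1}, $dim_H^{\widetilde{BS,WB}}(\{F_n\}_{n=1}^{\infty},\Phi)=s_0>s$, so one can choose $\epsilon>0$ with $W_{\epsilon}(H,\{F_n\}_{n=1}^{\infty},\Phi)>s$, equivalently $W^s_{\epsilon}(H,\{F_n\}_{n=1}^{\infty},\Phi)=\infty$, whence $W^s_{N,\epsilon}(H,\{F_n\}_{n=1}^{\infty},\Phi)\to\infty$ as $N\to\infty$; fix $N$ with $W^s_{N,\epsilon}(H,\{F_n\}_{n=1}^{\infty},\Phi)>1$. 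The crucial ingredient is then a Frostman‑type lemma for weighted covers, in the spirit of \cite{feng2012variational} and of \cite[Proposition~4.1]{liu2025variational}: using the compactness of $H$, from $W^s_{N,\epsilon}(H,\{F_n\}_{n=1}^{\infty},\Phi)>1$ one produces $\mu\in M(X)$ with $\mu(H)=1$ such that $\mu(B_{F_n}(x,\epsilon))\le\exp(-s\,\Phi_{F_n}(x))$ for every $x\in X$ and every $n\ge N$. The cutoff $N$ is then harmless, because it is invisible to the $\liminf$ over $n\to\infty$: for every $x\in X$,
\[
\underline{P}_{\mu}^{\widetilde{BK}}(\{F_n\}_{n=1}^{\infty},\Phi,\epsilon,x)=\liminf_{n\to\infty}\frac{-\log\mu(B_{F_n}(x,\epsilon))}{\Phi_{F_n}(x)}\ge s .
\]
Integrating against $\mu$ gives $\underline{P}_{\mu}^{\widetilde{BK}}(\{F_n\}_{n=1}^{\infty},\Phi,\epsilon)\ge s$, and since $\epsilon'\mapsto\underline{P}_{\mu}^{\widetilde{BK}}(\{F_n\}_{n=1}^{\infty},\Phi,\epsilon')$ increases as $\epsilon'\downarrow0$, the limit defining $\underline{P}_{\mu}^{\widetilde{BK}}(\{F_n\}_{n=1}^{\infty},\Phi)$ is at least its value at our fixed $\epsilon$, so $\underline{P}_{\mu}^{\widetilde{BK}}(\{F_n\}_{n=1}^{\infty},\Phi)\ge s$. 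Hence the supremum in the statement is $\ge s$; letting $s\uparrow s_0$ completes the proof.

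The main obstacle is exactly the Frostman‑type lemma that converts the largeness of the weighted quantity $W^s_{N,\epsilon}(H,\cdot,\cdot)$ into a measure on $H$ obeying the pointwise Bowen‑ball estimate: this is where the compactness of $H$ is genuinely used, via a minimax / Hahn--Banach duality (or an explicit greedy selection) on weighted covers, precisely as in the proofs of the Bowen‑type variational principles of \cite{feng2012variational,zhong2023variational,liu2025variational}. A secondary caveat is that the reduction to weighted BS dimension through Proposition~\ref{equality-1} uses the growth condition $\lim_{n\to\infty}|F_n|/\log n=\infty$ on the Følner sequence; to obtain the conclusion for a completely arbitrary Følner sequence one would instead follow the more hands‑on covering argument of \cite{liu2025variational}.
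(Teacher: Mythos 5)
This theorem is not proved in the paper at all: it is imported verbatim as \cite[Theorem 4.1]{liu2025variational}, and the manuscript only uses it as a black box inside the proof of Theorem \ref{main thm1}. So there is no internal argument to compare yours against; the relevant benchmark is the standard Feng--Huang-style proof that the cited source (and \cite{feng2012variational,zheng2016bowen}) carries out. Your outline reproduces exactly that architecture. The lower bound is complete and correct as you give it: Proposition \ref{prop 2.5} plus Theorem \ref{equ}, then the observation that $H$ itself is admissible in the infimum of Definition \ref{def 2.7}, followed by Proposition \ref{inter}. The treatment of the additive constant $\log c$ in the $\liminf$, and the monotonicity in $\epsilon$ of $\underline{P}_{\mu}^{\widetilde{BK}}(\{F_n\}_{n=1}^{\infty},\Phi,\epsilon)$, are also handled correctly.

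Two caveats keep this from being a proof rather than a correct proof plan. First, the dynamical Frostman lemma is not a technical footnote here --- it \emph{is} the upper bound; everything else in that direction is bookkeeping. Asserting its existence ``in the spirit of'' the cited works, without at least stating it precisely and sketching the Riesz/Hahn--Banach construction on $C(X)$ (where compactness of $H$ enters), leaves the substantive half of the theorem unproven. Second, and more concretely, your reduction runs through Proposition \ref{equality-1}, whose proof needs $n^{2}e^{-|F_{n}|\theta\hat{\Phi}/2}<1$ eventually and is therefore stated only under $\lim_{n\to\infty}|F_n|/\log n=\infty$; Theorem \ref{thm3.5} carries no such hypothesis on the Følner sequence. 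You flag this yourself, but the fallback you offer (``follow the covering argument of \cite{liu2025variational}'') is just a pointer back to the very proof being replaced, so as written your argument establishes the theorem only for Følner sequences satisfying the growth condition. If you want a self-contained proof in the stated generality, you must either prove the weighted/unweighted comparison without that condition or run the Frostman construction directly against $M^{s}_{N,\epsilon}$.
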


	\begin{theorem}\label{main thm1}(=\Cref{m1})
		Let  $\Phi: X\rightarrow \mathbb{R}$ be a  positive continuous function and  $H$ be a non-empty compact subset of $X$.  Then 
		\begin{align*}
			dim_H^{\widetilde{BS}}(\{F_n\}_{n=1}^{\infty},\Phi)&=\sup\{\underline{P}_{\mu}^{\widetilde{BK}}(\{F_n\}_{n=1}^{\infty},\Phi):\mu\in {M}(X),\mu(H)=1\}\\
			&=\sup\{dim_{\mu}^{\widetilde{BS}}(\{F_n\}_{n=1}^{\infty},\Phi):\mu\in {M}(X),\mu(H)=1\}\\
			&=\sup\{dim_{\mu}^{\widetilde{K}}(\{F_n\}_{n=1}^{\infty},\Phi):\mu\in {M}(X),\mu(H)=1\}.
		\end{align*}
	\end{theorem}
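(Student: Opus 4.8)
The plan is to establish the chain of equalities in \Cref{main thm1} by combining \Cref{thm3.5}, which already gives us $dim_H^{\widetilde{BS}}(\{F_n\}_{n=1}^{\infty},\Phi)=\sup\{\underline{P}_{\mu}^{\widetilde{BK}}(\{F_n\}_{n=1}^{\infty},\Phi):\mu\in M(X),\mu(H)=1\}$, with \Cref{equ}, which tells us $dim_{\mu}^{\widetilde{BS}}(\{F_n\}_{n=1}^{\infty},\Phi)=dim_{\mu}^{\widetilde{K}}(\{F_n\}_{n=1}^{\infty},\Phi)$ for every $\mu$. Thus it suffices to show the two intermediate suprema coincide with the outer two. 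Concretely, I must prove
\[
\sup_{\mu(H)=1}\underline{P}_{\mu}^{\widetilde{BK}}(\{F_n\}_{n=1}^{\infty},\Phi)\;\le\;\sup_{\mu(H)=1}dim_{\mu}^{\widetilde{K}}(\{F_n\}_{n=1}^{\infty},\Phi)\;=\;\sup_{\mu(H)=1}dim_{\mu}^{\widetilde{BS}}(\{F_n\}_{n=1}^{\infty},\Phi)\;\le\;dim_H^{\widetilde{BS}}(\{F_n\}_{n=1}^{\infty},\Phi),
\]
after which \Cref{thm3.5} closes the loop and forces all four quantities to be equal.

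The first inequality is immediate from \Cref{prop 2.5}, which gives $\underline{P}_{\mu}^{\widetilde{BK}}(\{F_n\}_{n=1}^{\infty},\Phi)\le dim_{\mu}^{\widetilde{K}}(\{F_n\}_{n=1}^{\infty},\Phi)$ pointwise in $\mu$; taking suprema over $\{\mu:\mu(H)=1\}$ preserves it. The middle equality is \Cref{equ} applied measure-by-measure and then passed through the supremum. For the last inequality, fix $\mu$ with $\mu(H)=1$. For every $\delta\in(0,1)$ and every $\epsilon>0$, any Borel set $H'$ with $\mu(H')\ge 1-\delta$ can be intersected with $H$: the set $H'\cap H$ still has $\mu$-measure at least $1-\delta$, and by monotonicity (\Cref{prop 2.21}(1)) $\widetilde{M}_{\epsilon}(H'\cap H,\{F_n\}_{n=1}^{\infty},\Phi)\le \widetilde{M}_{\epsilon}(H,\{F_n\}_{n=1}^{\infty},\Phi)$. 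Hence
\[
\inf\{\widetilde{M}_{\epsilon}(H',\{F_n\}_{n=1}^{\infty},\Phi):\mu(H')\ge 1-\delta\}\le \widetilde{M}_{\epsilon}(H,\{F_n\}_{n=1}^{\infty},\Phi),
\]
and letting $\delta\to 0$ then $\epsilon\to 0$ and invoking \Cref{inter} yields $dim_{\mu}^{\widetilde{BS}}(\{F_n\}_{n=1}^{\infty},\Phi)\le dim_H^{\widetilde{BS}}(\{F_n\}_{n=1}^{\infty},\Phi)$. Taking the supremum over admissible $\mu$ gives the last inequality.

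Chaining these together, all four quantities are squeezed between $dim_H^{\widetilde{BS}}(\{F_n\}_{n=1}^{\infty},\Phi)$ at both ends, so they are all equal. I do not expect a serious obstacle here: the genuinely hard analytic work — constructing, for a compact $H$, measures $\mu$ with $\underline{P}_{\mu}^{\widetilde{BK}}$ close to $dim_H^{\widetilde{BS}}$ (the "lower bound" direction via a Frostman/mass-distribution argument adapted to Følner averages) and controlling covers in the "upper bound" direction — is already packaged inside \Cref{thm3.5}. The only mild subtlety is making sure the restriction step $H'\rightsquigarrow H'\cap H$ is carried out with $\widetilde{M}$ rather than $M$, so that \Cref{inter} and \Cref{def 2.7} apply verbatim; the positivity and continuity of $\Phi$, already used throughout Section~\ref{Preliminary}, guarantee the $\epsilon\to 0$ limits exist and that the two flavors ($M_\epsilon$ versus $\widetilde{M}_\epsilon$) agree in the limit.
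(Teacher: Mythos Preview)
Your proposal is correct and follows essentially the same route as the paper: both arguments combine \Cref{thm3.5}, \Cref{equ}, and \Cref{prop 2.5} with the easy inequality $\sup_{\mu(H)=1}dim_{\mu}^{\widetilde{BS}}\le dim_H^{\widetilde{BS}}$ to close the chain. The only difference is cosmetic---the paper dismisses that last inequality with a single ``Clearly,'' whereas you spell out the intersection-with-$H$ and monotonicity step explicitly.
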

	\begin{proof}
		Clearly, we have that 
		$$\sup\{dim_{\mu}^{\widetilde{BS}}(\{F_n\}_{n=1}^{\infty},\Phi):\mu\in {M}(X),\mu(H)=1\}\leq 	dim_H^{\widetilde{BS}}(\{F_n\}_{n=1}^{\infty},\Phi).$$
		Applying Proposition \ref{prop 2.5}, this gives that 
		$$\sup\{\underline{P}_{\mu}^{\widetilde{BK}}(\{F_n\}_{n=1}^{\infty},\Phi):\mu\in {M}(X),\mu(H)=1\}\leq 	\sup\{dim_{\mu}^{\widetilde{K}}(\{F_n\}_{n=1}^{\infty},\Phi):\mu\in {M}(X),\mu(H)=1\}.$$
		Finally, from Theorems \ref{equ} and  \ref{thm3.5}, it follows that 
		 	\begin{align*}
		 	dim_H^{\widetilde{BS}}(\{F_n\}_{n=1}^{\infty},\Phi)&=\sup\{\underline{P}_{\mu}^{\widetilde{BK}}(\{F_n\}_{n=1}^{\infty},\Phi):\mu\in {M}(X),\mu(H)=1\}\\
		 	&=\sup\{dim_{\mu}^{\widetilde{BS}}(\{F_n\}_{n=1}^{\infty},\Phi):\mu\in {M}(X),\mu(H)=1\}\\
		 	&=\sup\{dim_{\mu}^{\widetilde{K}}(\{F_n\}_{n=1}^{\infty},\Phi):\mu\in {M}(X),\mu(H)=1\}.
		 \end{align*}

	\end{proof}
	\begin{remark}

	(1) Theorem \ref{main thm1} yields two new variational principles for BS dimension under group actions, thereby extending and refining previous work under amenable group actions in \cite{liu2025variational}. 
	 
	 	(2) For $\Phi \equiv 1$,  Theorem \ref{main thm1} specializes to recover and strengthen \cite[Theorem 3.1]{zheng2016bowen} on Bowen's topological entropy under amenable group actions.
	\end{remark}

	
	Building on the methodology developed in \cite[Theorem 1.4]{xu2024inverse}, we establish the following inverse variational relationship between BS dimension and BS dimension in the sense of Katok or BS dimension of Borel probability measure.
	
	\begin{theorem}\label{inverse}(=\Cref{m2})
		Let  $\Phi: X\rightarrow \mathbb{R}$ be a  positive continuous function and  $\mu\in {M}(X)$. Then
		$$
		\begin{aligned}
			dim_{\mu}^{\widetilde{BS}}(\{F_n\}_{n=1}^{\infty},\Phi)&=dim_{\mu}^{\widetilde{K}}(\{F_n\}_{n=1}^{\infty},\Phi)\\
			&=  \inf \left\{dim_H^{\widetilde{BS}}(\{F_n\}_{n=1}^{\infty},\Phi):  \mu(H)=1\right\}\\
			&=\lim\limits_{\epsilon\to 0}\inf \left\{{M}_{\epsilon}(H,\{F_n\}_{n=1}^{\infty},\Phi): \mu(H)=1\right\}.
		\end{aligned}
		$$
	\end{theorem}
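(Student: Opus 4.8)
The plan is to prove \Cref{inverse} by combining the already-established equality $dim_{\mu}^{\widetilde{BS}}(\{F_n\}_{n=1}^{\infty},\Phi)=dim_{\mu}^{\widetilde{K}}(\{F_n\}_{n=1}^{\infty},\Phi)$ from \Cref{equ} with two inequalities that bracket the infimum $\inf\{dim_H^{\widetilde{BS}}(\{F_n\}_{n=1}^{\infty},\Phi):\mu(H)=1\}$. First I would record the trivial direction: for any $H$ with $\mu(H)=1$ we have $\mu(H)\geq 1-\delta$ for every $\delta\in(0,1)$, so by \Cref{def 2.7} and \Cref{inter} one immediately gets $dim_{\mu}^{\widetilde{BS}}(\{F_n\}_{n=1}^{\infty},\Phi,\epsilon)\leq \widetilde{M}_{\epsilon}(H,\{F_n\}_{n=1}^{\infty},\Phi)$ for each $\epsilon$; letting $\epsilon\to0$ and then taking the infimum over all such $H$ yields $dim_{\mu}^{\widetilde{BS}}(\{F_n\}_{n=1}^{\infty},\Phi)\leq\inf\{dim_H^{\widetilde{BS}}(\{F_n\}_{n=1}^{\infty},\Phi):\mu(H)=1\}$. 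The same argument with ${M}_{\epsilon}$ in place of $\widetilde{M}_{\epsilon}$ (using \Cref{rem2.8}(1)) handles the last line of the statement once the reverse inequality is in hand.

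The substantive direction is to construct, for every $\eta>0$, a single Borel set $H_\eta$ with $\mu(H_\eta)=1$ and $dim_{H_\eta}^{\widetilde{BS}}(\{F_n\}_{n=1}^{\infty},\Phi)\leq dim_{\mu}^{\widetilde{K}}(\{F_n\}_{n=1}^{\infty},\Phi)+\eta$. Here I would mimic the construction already appearing inside the proof of \Cref{equ}: set $\zeta=dim_{\mu}^{\widetilde{K}}(\{F_n\}_{n=1}^{\infty},\Phi)$, but now run it through all scales $\epsilon$ simultaneously. Pick a sequence $\epsilon_k\downarrow0$ and $s_k\downarrow0$ with $\sum_k s_k<\eta$; for each $k$, since $dim_{\mu}^{\widetilde{K}}(\{F_n\}_{n=1}^{\infty},\Phi,\epsilon_k)\leq\zeta$ there is $\delta_k$ so small that $\Lambda_{\epsilon_k}^{\zeta+s_k}(\mu,\{F_n\}_{n=1}^{\infty},\Phi,\delta)=0$ for all $\delta<\delta_k$. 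As in the proof of \Cref{equ}, for each $k$ and each $N$ one extracts covers $\{B_{F_{n_i}}(x_i,\epsilon_k)\}_{i\in I_{k,N,m}}$ with $n_i\geq N$, with $\mu$-measure of the union exceeding $1-\delta_{k,N,m}$ where $\delta_{k,N,m}\to0$ as $m\to\infty$, and with cost sum at most $2^{-m}$. Form $H_{k,N}=\bigcup_m\bigcup_{i}B_{F_{n_i}}(x_i,\epsilon_k)$, so $\mu(H_{k,N})=1$ and ${M}^{\zeta+s_k}_{N,\epsilon_k}(H_{k,N},\{F_n\}_{n=1}^{\infty},\Phi)\leq1$, then $H_k=\bigcap_N H_{k,N}$ gives $\mu(H_k)=1$ and ${M}_{\epsilon_k}(H_k,\{F_n\}_{n=1}^{\infty},\Phi)\leq\zeta+s_k$. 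Finally set $H_\eta=\bigcap_k H_k$, so $\mu(H_\eta)=1$ and $M_{\epsilon_k}(H_\eta,\{F_n\}_{n=1}^{\infty},\Phi)\leq M_{\epsilon_k}(H_k,\{F_n\}_{n=1}^{\infty},\Phi)\leq\zeta+s_k$ for every $k$; letting $k\to\infty$ along this subsequence of scales (and using monotonicity of $M_\epsilon$ in $\epsilon$ so the limit over the subsequence equals the limit over all $\epsilon$) yields $dim_{H_\eta}^{\widetilde{BS}}(\{F_n\}_{n=1}^{\infty},\Phi)\leq\zeta$, which already gives $\inf\{dim_H^{\widetilde{BS}}:\mu(H)=1\}\leq\zeta=dim_{\mu}^{\widetilde{K}}(\{F_n\}_{n=1}^{\infty},\Phi)$ without even needing the $\eta$ slack.

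Assembling: the trivial direction gives $dim_{\mu}^{\widetilde{BS}}(\{F_n\}_{n=1}^{\infty},\Phi)\leq\inf\{\cdots\}$, the construction gives $\inf\{\cdots\}\leq dim_{\mu}^{\widetilde{K}}(\{F_n\}_{n=1}^{\infty},\Phi)$, and \Cref{equ} closes the loop since $dim_{\mu}^{\widetilde{BS}}(\{F_n\}_{n=1}^{\infty},\Phi)=dim_{\mu}^{\widetilde{K}}(\{F_n\}_{n=1}^{\infty},\Phi)$; all four quantities coincide. The version with $M_\epsilon$ on the last line follows because the same $H_\eta$ witnesses $M_{\epsilon_k}(H_\eta,\cdots)\leq\zeta+s_k$, while the reverse inequality $dim_\mu^{\widetilde{BS}}(\cdots)\leq\lim_{\epsilon\to0}\inf\{M_\epsilon(H,\cdots):\mu(H)=1\}$ is immediate from \Cref{rem2.8}(1) and the fact that $\mu(H)=1$ forces $\mu(H)\geq1-\delta$ for all $\delta$.

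The main obstacle I anticipate is bookkeeping the three nested limits ($\epsilon\to0$, $\delta\to0$, $N\to\infty$) consistently when passing from the fixed-scale statement of \Cref{equ} to a single set $H_\eta$ that works simultaneously for a sequence of scales $\epsilon_k\to0$: one must be careful that the cover families chosen at scale $\epsilon_k$ are compatible (in $\mu$-measure and in the $N$-truncation) so that the countable intersection $H_\eta=\bigcap_k\bigcap_N H_{k,N}$ still has full measure, and that $M_{\epsilon_k}(H_\eta,\cdots)$ does not blow up when one restricts to a full-measure subset. The measure-$1$ property is preserved by countable intersections, so this is a matter of organizing a doubly-indexed sequence $\delta_{k,N,m}$ carefully; no genuinely new idea beyond the proof of \Cref{equ} is required. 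A secondary (routine) point is justifying that the limit over the subsequence $\epsilon_k\downarrow0$ of $M_{\epsilon_k}(H_\eta,\cdots)$ equals $dim_{H_\eta}^{\widetilde{BS}}(\{F_n\}_{n=1}^{\infty},\Phi)$, which is immediate since $\epsilon\mapsto M_\epsilon(H_\eta,\cdots)$ is monotone in $\epsilon$ so the full limit agrees with any sequential limit.
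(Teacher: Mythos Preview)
Your proposal is correct and shares the paper's overall architecture: both arguments obtain the hard inequality $\inf\{dim_H^{\widetilde{BS}}:\mu(H)=1\}\leq dim_\mu^{\widetilde{K}}$ by producing, for each scale in a sequence $\epsilon_k\downarrow 0$, a full-measure set $H_k$ with $M_{\epsilon_k}(H_k,\cdots)$ close to the target, and then intersecting over $k$; the easy direction and the use of \Cref{equ} to close the loop are identical.

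The organizational difference lies in how the full-measure set at each fixed scale is manufactured. The paper first isolates the intermediate equality $dim_\mu^{\widetilde{BS}}=\lim_{\epsilon\to0}\inf\{M_\epsilon(H,\cdots):\mu(H)=1\}$ (its equation (3.3)) by taking, at a fixed $\epsilon$, sets $H_k$ with $\mu(H_k)\geq 1-\tfrac{1}{k}$ and $M_\epsilon(H_k,\cdots)<\mathfrak{S}+\lambda$, forming the \emph{union} $H=\bigcup_k H_k$, and invoking countable stability $M_\epsilon(\bigcup H_k,\cdots)=\sup_k M_\epsilon(H_k,\cdots)$; only afterwards does it intersect over scales $\epsilon=\tfrac{1}{j}$. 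You instead bypass the union step entirely by re-running the cover construction from the proof of \Cref{equ} at each scale $\epsilon_k$ to obtain $H_k=\bigcap_N H_{k,N}$ directly with $\mu(H_k)=1$. Your route is slightly more hands-on (it unpacks \Cref{equ} again) but avoids appealing to countable stability of $M_\epsilon$; the paper's route is more modular, proving the fourth displayed equality as a genuine lemma before deducing the third from it. Neither approach requires any idea the other lacks.
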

	
	\begin{proof}
		Let 
		$dim_{\mu}^{\widetilde{BS}}(\{F_n\}_{n=1}^{\infty},\Phi)=\mathfrak{S},$
		then for any  $\lambda>0 $, there exists  $\epsilon_{0}>0$  such that for any  $0<\epsilon<\epsilon_{0}<1 $, it has
		$$
		\lim_{\delta\to 0}\inf\{{M}_{\epsilon}(H,\{F_n\}_{n=1}^{\infty},\Phi):\mu(H) \geq 1-\delta\}<\mathfrak{S}+\frac{\lambda}{3}.$$
		
		Hence, for any $ 0<\epsilon<\epsilon_{0} $, there exists  $\delta(\epsilon)>0 $, such that for any  $0<\delta<\delta(\epsilon) $, one has
		$$
		\inf\{{M}_{\epsilon}(H,\{F_n\}_{n=1}^{\infty},\Phi):\mu(H) \geq 1-\delta\}<\mathfrak{S}+\frac{2\lambda}{3}.
		$$
		
		Therefore, for every  $k$  large enough, there exists $ H_{k} $ with  $\mu\left(H_{k}\right)\geq 1-\frac{1}{k} $, such that
		$$
		{M}_{\epsilon}(H_{k},\{F_n\}_{n=1}^{\infty},\Phi)<\mathfrak{S}+\lambda.
		$$
		
		Take $ H=\cup H_{k} $. Then we have  $\mu(H)=1 $, furthermore, one has
		$$
		{M}_{\epsilon}(H,\{F_n\}_{n=1}^{\infty},\Phi)=\sup_{k} {M}_{\epsilon}(H_{k},\{F_n\}_{n=1}^{\infty},\Phi) \leq \mathfrak{S}+\lambda.
		$$
		
		Thus, we have
		$$
		\inf \left\{{M}_{\epsilon}(H,\{F_n\}_{n=1}^{\infty},\Phi): \mu(H)=1\right\} \leq \mathfrak{S}+\lambda.
		$$
		Hence, 
		\begin{equation*}
			\lim\limits_{\epsilon\to0}\inf \left\{{M}_{\epsilon}(H,\{F_n\}_{n=1}^{\infty},\Phi): \mu(H)=1\right\} \leq \mathfrak{S}=dim_{\mu}^{\widetilde{BS}}(\{F_n\}_{n=1}^{\infty},\Phi).
		\end{equation*}
		In fact,  for any  $\delta>0 $, one has
		$$
		\inf \left\{{M}_{\epsilon}(H,\{F_n\}_{n=1}^{\infty},\Phi): \mu(H)=1\right\} \geq \inf \left\{{M}_{\epsilon}(H,\{F_n\}_{n=1}^{\infty},\Phi): \mu(H)\geq 1-\delta\right\}.
		$$
		
		Therefore,
		$$
		\begin{aligned}
			& \lim\limits_{\epsilon\to0}\inf \left\{{M}_{\epsilon}(H,\{F_n\}_{n=1}^{\infty},\Phi): \mu(H)=1\right\} \\
			\geq &\lim_{\epsilon\to 0}\lim_{\delta\to 0}\inf\{{M}_{\epsilon}(H,\{F_n\}_{n=1}^{\infty},\Phi):\mu(H) \geq 1-\delta\}\\
			= &dim_{\mu}^{\widetilde{BS}}(\{F_n\}_{n=1}^{\infty},\Phi).
		\end{aligned}
		$$
		
		Combining above arguments, we have
		\begin{equation} \label{eq3}
			dim_{\mu}^{\widetilde{BS}}(\{F_n\}_{n=1}^{\infty},\Phi)= \lim\limits_{\epsilon\to0}\inf \left\{{M}_{\epsilon}(H,\{F_n\}_{n=1}^{\infty},\Phi): \mu(H)=1\right\} .
		\end{equation}
		
		Next, using (\ref{eq3}), we can state that
		$$
		dim_{\mu}^{\widetilde{BS}}(\{F_n\}_{n=1}^{\infty},\Phi)=\inf \left\{dim_H^{\widetilde{BS}}(\{F_n\}_{n=1}^{\infty},\Phi):  \mu(H)=1\right\}.
		$$
		
		To see it, for any  $\eta>0 $, there exists  $\epsilon_{0}>0$  such that for any  $0<\epsilon<\epsilon_{0} ,$
		$$
		\inf \left\{{M}_{\epsilon}(H,\{F_n\}_{n=1}^{\infty},\Phi): \mu(H)=1\right\} <\mathfrak{S}+\frac{\eta}{2}.
		$$
		Hence, there exists $ H_{\epsilon} \subset X$  with  $\mu\left(H_{\epsilon}\right)=1 $ such that
		$$
		{M}_{\epsilon}(H_{\epsilon},\{F_n\}_{n=1}^{\infty},\Phi)<\mathfrak{S}+\eta.
		$$
		
		Take  $\epsilon=\frac{1}{j}$. Then for  sufficiently large $j$, there exists  $ H_{j} \subset X $ with  $\mu\left(H_{j}\right)=1$  such that
		$$
		{M}_{\frac{1}{j}}(H_{\frac{1}{j}},\{F_n\}_{n=1}^{\infty},\Phi)<\mathfrak{S}+\eta.$$
		
		Let  $H=\cap_{j} H_{j} $. Then we have  $\mu(H)=1$  and
		$$
		{M}_{\frac{1}{j}}(H,\{F_n\}_{n=1}^{\infty},\Phi)\leq {M}_{\frac{1}{j}}(H_{\frac{1}{j}},\{F_n\}_{n=1}^{\infty},\Phi)<\mathfrak{S}+\eta.
		$$
		It follows that
		$$\inf \left\{dim_H^{\widetilde{BS}}(\{F_n\}_{n=1}^{\infty},\Phi):  \mu(H)=1\right\}\leq {M}_{\frac{1}{j}}(H,\{F_n\}_{n=1}^{\infty},\Phi)\leq \mathfrak{S}+\eta .$$
		Since $\eta$ is arbitrary, then one has
		\begin{equation*}
			\inf \left\{dim_H^{\widetilde{BS}}(\{F_n\}_{n=1}^{\infty},\Phi):  \mu(H)=1\right\}\leq \mathfrak{S}=dim_{\mu}^{\widetilde{BS}}(\{F_n\}_{n=1}^{\infty},\Phi).
		\end{equation*}
		
		Conversely, for each  $H \subset X$  with  $\mu(H)=1 $, by Proposition \ref{equ} one has
		$$
		dim_{\mu}^{\widetilde{BS}}(\{F_n\}_{n=1}^{\infty},\Phi)=dim_{\mu}^{\widetilde{K}}(\{F_n\}_{n=1}^{\infty},\Phi) \leq dim_H^{\widetilde{BS}}(\{F_n\}_{n=1}^{\infty},\Phi),
		$$
		which implies that 
		$$
		dim_{\mu}^{\widetilde{BS}}(\{F_n\}_{n=1}^{\infty},\Phi)=dim_{\mu}^{\widetilde{K}}(\{F_n\}_{n=1}^{\infty},\Phi) \leq \inf \left\{dim_H^{\widetilde{BS}}(\{F_n\}_{n=1}^{\infty},\Phi):  \mu(H)=1\right\}.
		$$
		Hence, we get that 
		$$
		\begin{aligned}
			dim_{\mu}^{\widetilde{BS}}(\{F_n\}_{n=1}^{\infty},\Phi)&=dim_{\mu}^{\widetilde{K}}(\{F_n\}_{n=1}^{\infty},\Phi)\\
			&=  \inf \left\{dim_H^{\widetilde{BS}}(\{F_n\}_{n=1}^{\infty},\Phi):  \mu(H)=1\right\}\\
			&=\lim\limits_{\epsilon\to 0}\inf \left\{{M}_{\epsilon}(H,\{F_n\}_{n=1}^{\infty},\Phi): \mu(H)=1\right\}.
		\end{aligned}
		$$
	\end{proof}

	\subsection{Billingsley’s Theorem for BS dimension under amenable group actions}
Beyond variational principles, another fundamental connection between measure-theoretic entropy and topological entropy is established through Billingsley-type theorem. The dynamical version of this relationship was first developed by Ma and Wen \cite{ma2008billingsley}, who proved that Bowen entropy can be expressed in terms of local entropy, thus providing a perfect analogue of Billingsley's classical result for Hausdorff dimension.
	
	Subsequently, Tang et al. \cite{tang2015variational} generalized the result to Pesin-Pitskel topological pressure.  Huang et al. \cite{huang2019billingsley}  further extended it to amenable group actions.
	The most recent advancement comes from Liu and Peng \cite{liu2025variational}, who established a comprehensive Billingsley-type theorem for BS dimension, more precisely, they extended \cite[Theorem 7.1]{wang2012variational}, \cite[Theorem A]{tang2015variational} and \cite[Theorem 4.8]{huang2019billingsley} to the BS dimension under amenable group actions.
	\begin{theorem}\label{billing-1}\cite[Theorem 3.1]{liu2025variational}
		Let   $\mu \in {M}(X), H \subset X$  and  let $\Phi: X \to \mathbb{R}$  be a positive continuous function. For each  $s \in(0, \infty) $, then
		
		(1) if  $\underline{P}_{\mu}^{\widetilde{BK}}(\{F_n\}_{n=1}^{\infty},\Phi,x) \leq s$  for all  $x \in H $, then  $dim_H^{\widetilde{BS}}(\{F_n\}_{n=1}^{\infty},\Phi) \leq s $;
		
		(2) if  $\underline{P}_{\mu}^{\widetilde{BK}}(\{F_n\}_{n=1}^{\infty},\Phi,x) \geq s$  for all  $x \in H $, and  $\mu(H)>0 $, then  $dim_H^{\widetilde{BS}}(\{F_n\}_{n=1}^{\infty},\Phi) \geq s $.	
		
	\end{theorem}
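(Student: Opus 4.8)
The statement is of Billingsley type, so the plan is to prove the two inequalities separately. The lower bound~(2) should come out almost immediately from the machinery already assembled, by passing to the normalized restriction of $\mu$ to $H$; the upper bound~(1) is the substantive direction and requires a direct Vitali-type covering estimate in the spirit of Ma and Wen \cite{ma2008billingsley} and of the amenable-group Bowen-entropy literature.

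For~(2): assume $\underline{P}_{\mu}^{\widetilde{BK}}(\{F_n\}_{n=1}^{\infty},\Phi,x)\geq s$ for every $x\in H$ and $\mu(H)>0$, and put $\mu_H:=\mu(\,\cdot\,\cap H)/\mu(H)\in M(X)$. Since $\mu_H(A)\le \mu(A)/\mu(H)$ for all Borel $A$, we have $-\log\mu_H(B_{F_n}(x,\epsilon))\ge -\log\mu(B_{F_n}(x,\epsilon))+\log\mu(H)$; as $\Phi$ is positive and continuous on the compact space $X$, $\Phi_{F_n}(x)\ge\hat\Phi\,|F_n|\to\infty$, so after dividing by $\Phi_{F_n}(x)$ the additive constant $\log\mu(H)$ disappears in the limit. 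Hence $\underline{P}_{\mu_H}^{\widetilde{BK}}(\{F_n\}_{n=1}^{\infty},\Phi,x)\ge \underline{P}_{\mu}^{\widetilde{BK}}(\{F_n\}_{n=1}^{\infty},\Phi,x)\ge s$ for all $x\in H$, and since $\mu_H(H)=1$, integrating gives $\underline{P}_{\mu_H}^{\widetilde{BK}}(\{F_n\}_{n=1}^{\infty},\Phi)\ge s$. Now Proposition~\ref{prop 2.5}, Theorem~\ref{equ}, and the trivial bound $dim_{\mu_H}^{\widetilde{BS}}(\{F_n\}_{n=1}^{\infty},\Phi)\le dim_H^{\widetilde{BS}}(\{F_n\}_{n=1}^{\infty},\Phi)$ forced by $\mu_H(H)=1$ (via Definition~\ref{def 2.7} and Proposition~\ref{inter}) give
\[
s\le \underline{P}_{\mu_H}^{\widetilde{BK}}(\{F_n\}_{n=1}^{\infty},\Phi)\le dim_{\mu_H}^{\widetilde{K}}(\{F_n\}_{n=1}^{\infty},\Phi)=dim_{\mu_H}^{\widetilde{BS}}(\{F_n\}_{n=1}^{\infty},\Phi)\le dim_H^{\widetilde{BS}}(\{F_n\}_{n=1}^{\infty},\Phi),
\]
which is~(2). (Alternatively, one may first extract by inner regularity a compact $K\subseteq H$ with $\mu(K)>0$ and apply the variational principle Theorem~\ref{thm3.5} to $K$.)

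For~(1): assume $\underline{P}_{\mu}^{\widetilde{BK}}(\{F_n\}_{n=1}^{\infty},\Phi,x)\le s$ for all $x\in H$ and fix $s'>s$. Since $\epsilon\mapsto \liminf_{n}\frac{-\log\mu(B_{F_n}(x,\epsilon))}{\Phi_{F_n}(x)}$ is non-increasing with limit $\underline{P}_{\mu}^{\widetilde{BK}}(\{F_n\}_{n=1}^{\infty},\Phi,x)\le s<s'$ as $\epsilon\to0$, for every $\epsilon>0$ and every $x\in H$ there are infinitely many $n$ with $\mu(B_{F_n}(x,\epsilon))>e^{-s'\Phi_{F_n}(x)}$; thus, for each $N\in\mathbb N$, the family $\{B_{F_n}(x,\epsilon):x\in H,\ n\ge N,\ \mu(B_{F_n}(x,\epsilon))>e^{-s'\Phi_{F_n}(x)}\}$ covers $H$. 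The plan is to extract from this family, by a Vitali-type covering lemma for Bowen balls (for a fixed $n$ this is Lemma~\ref{cover} in the compact metric space $(X,d_{F_n})$; across different $n$ one selects greedily, ordering the levels by $|F_n|$ and keeping track of the already-covered part), a countable subfamily $\{B_{F_{n_i}}(x_i,\epsilon)\}_i$ that is pairwise disjoint in $X$ and whose enlargements $\{B_{F_{n_i}}(x_i,5\epsilon)\}_i$ still cover $H$. Disjointness gives $\sum_i e^{-s'\Phi_{F_{n_i}}(x_i)}<\sum_i\mu(B_{F_{n_i}}(x_i,\epsilon))\le 1$; combining this with the uniform-continuity estimate $\sup_{y\in B_{F_n}(x,5\epsilon)}\Phi_{F_n}(y)\le(1+c_\epsilon)\Phi_{F_n}(x)$ with $c_\epsilon\to0$ as $\epsilon\to0$ (the computation in the proof of Proposition~\ref{inter}) yields $M^{s'(1+c_\epsilon)}_{N,5\epsilon}(H,\{F_n\}_{n=1}^{\infty},\Phi)\le1$; letting $N\to\infty$ gives $M_{5\epsilon}(H,\{F_n\}_{n=1}^{\infty},\Phi)\le s'(1+c_\epsilon)$ for small $\epsilon$, and then letting $\epsilon\to0$ (using $c_\epsilon\to0$ and the monotonicity of $M_\epsilon$ in $\epsilon$) gives $dim_H^{\widetilde{BS}}(\{F_n\}_{n=1}^{\infty},\Phi)\le s'$, whence $s'\downarrow s$ finishes. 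A shortcut for~(1): the hypothesis $\underline{P}_{\mu}^{\widetilde{BK}}(\{F_n\}_{n=1}^{\infty},\Phi,x)\le s$ is equivalent, using $\hat\Phi\,|F_n|\le\Phi_{F_n}(x)\le(\max_X\Phi)|F_n|$, to the local Pesin--Pitskel pressure bound $\liminf_n|F_n|^{-1}\big(-\log\mu(B_{F_n}(x,\epsilon))-s\Phi_{F_n}(x)\big)\le0$, so one may instead invoke the Billingsley-type theorem for Pesin--Pitskel pressure under amenable group actions \cite{huang2019billingsley} with potential $-s\Phi$ to get $P_H^B(\{F_n\}_{n=1}^{\infty},-s\Phi)\le0$, and conclude by Bowen's equation (Theorem~\ref{bowen}).

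The heart of the matter, and the step I expect to be most delicate, is the covering lemma used in~(1): extracting a \emph{globally} pairwise-disjoint subfamily of Bowen balls indexed by \emph{different} Følner sets whose bounded dilates still cover $H$. For $\mathbb Z$-actions this is the classical $5r$-lemma, which rests on the nesting $F_n\subseteq F_{n'}$ for $n\le n'$ (so that a ``smaller'' Bowen ball meeting a ``larger'' one lies inside a fixed dilate of it); for a general amenable group the $F_n$ need not be nested, so this monotonicity fails and one must argue more carefully (exploiting that $|F_n|$ increases, or replacing the naive Vitali argument by the Ornstein--Weiss quasi-tiling machinery, as in the treatment of Bowen entropy of subsets under amenable group actions). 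Everything else — the reduction to a cover by ``good'' Bowen balls, the bound coming from disjointness, the uniform-continuity correction, and the elementary limit passages — is routine given the results already established in the paper.
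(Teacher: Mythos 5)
First, a point of comparison: the paper does not actually prove this statement --- it is imported verbatim from \cite[Theorem 3.1]{liu2025variational} --- so the only internal analogue is the proof of the packing version (Theorem \ref{billing-2}), where the roles of the two parts are reversed: for packing pre-measures the upper bound is the easy disjointness estimate and the lower bound is the one needing the $5r$-covering Lemma \ref{cover}, whereas for the Bowen-type quantity here part (2) is easy and part (1) carries the covering argument.

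Your part (2) is correct and complete. The normalization $\mu_H=\mu(\cdot\cap H)/\mu(H)$, the observation that the additive constant $\log\mu(H)$ dies after division by $\Phi_{F_n}(x)\ge\hat\Phi|F_n|\to\infty$, and the chain $s\le\underline{P}_{\mu_H}^{\widetilde{BK}}\le \dim_{\mu_H}^{\widetilde K}=\dim_{\mu_H}^{\widetilde{BS}}\le \dim_H^{\widetilde{BS}}$ via Proposition \ref{prop 2.5}, Theorem \ref{equ} and Definition \ref{def 2.7} all check out. (Two small caveats: you need $H$ Borel for $\mu_H$ to make sense, whereas the direct argument of Proposition \ref{prop 2.5} run with the sets $E_N=\{x\in H:\mu(B_{F_n}(x,2\epsilon))<e^{-s'\Phi_{F_n}(x)}\ \forall n\ge N\}$ works with outer measure and avoids this; and your use of Proposition \ref{prop 2.5}/Theorem \ref{equ} makes (2) logically downstream of those results, which is fine here but is worth noting since Liu--Peng prove the Billingsley theorem independently.)

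Part (1) is where the genuine gap sits, and you have located it yourself: the extraction of a \emph{globally} pairwise disjoint subfamily of Bowen balls across different levels $n$ whose $5\epsilon$-dilates still cover $H$. The greedy/Vitali selection you sketch relies on the implication ``$B_{F_{n'}}(y,\epsilon)\cap B_{F_n}(x,\epsilon)\ne\emptyset$ and $n'\ge n$ imply $B_{F_{n'}}(y,\epsilon)\subset B_{F_n}(x,C\epsilon)$,'' which requires $F_n\subset F_{n'}$; the paper only normalizes $|F_n|$ to be increasing, not the sets themselves, so this fails for a general F{\o}lner sequence and the estimate $\sum_i e^{-s'\Phi_{F_{n_i}}(x_i)}\le 1$ is not available as stated. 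The standard repair --- apply Lemma \ref{cover} separately at each level $n\ge N$ and sum the level-$n$ contributions, each bounded by $e^{-(s''-s')\hat\Phi|F_n|}$ --- converges only if $\sum_{n\ge N}e^{-c|F_n|}<\infty$ for $c>0$, i.e.\ under a growth hypothesis such as $\lim_n|F_n|/\log n=\infty$, which appears in Proposition \ref{equality-1} and Theorems \ref{billing-2}--\ref{thm 1.1} but \emph{not} in the statement you are proving. So the primary route is incomplete at its crux. Your ``shortcut,'' on the other hand, does close part (1): the hypothesis $\underline{P}_{\mu}^{\widetilde{BK}}(\{F_n\}_{n=1}^{\infty},\Phi,x)\le s$ implies (using $\hat\Phi|F_n|\le\Phi_{F_n}(x)\le\|\Phi\|_\infty|F_n|$) that the local lower Pesin--Pitskel pressure of $\mu$ at $x$ for the potential $-s\Phi$ is $\le 0$ on $H$, whence $P^B_H(\{F_n\}_{n=1}^{\infty},-s\Phi)\le 0$ by the Billingsley theorem of \cite{huang2019billingsley}, and monotonicity of $t\mapsto P^B_H(\{F_n\}_{n=1}^{\infty},-t\Phi)$ together with Bowen's equation (Theorem \ref{bowen}) gives $\dim_H^{\widetilde{BS}}(\{F_n\}_{n=1}^{\infty},\Phi)\le s$. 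Provided the cited theorem of \cite{huang2019billingsley} applies to the F{\o}lner sequences considered here, that is a complete and in fact cleaner proof of (1); I would promote it from a footnote to the main argument and drop the unfinished covering construction.
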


	\section{BS packing dimension under amenable group actions}\label{packing dimension}
	In this section and next section, we focus on the study of BS packing dimension under amenable group actions, which is a natural generalization of BS packing   dimension in \cite[Definition 2.2]{wang2012variational} and packing topological entropy given in \cite{feng2012variational} and \cite{dou2023packing}.
	
	Let $H\subset X$ be a non-empty subset, $\epsilon>0$, $\Phi \in C(X,\mathbb{R})$ be a positive continuous function, $N\in \mathbb{N}$, $s >0$.
	Put
	\begin{equation}\label{eq5.1}
		L^{s}_{N,\epsilon}(H,\{F_n\}_{n=1}^{\infty},\Phi)=\sup\left\{\sum_i\limits  \exp(-s\sup_{y\in \overline{B}_{F_{n_i}}(x_i,\epsilon)}\Phi_{F_{n_i}}(y))\right\},
	\end{equation}
	where the supremum  is taken over all  finite or countable disjoint $\{\overline B_{F_{n_i}}(x_i,\epsilon)\}_{i\in I}$ with $n_i \geq N,~x_i \in H.$  Since $L^{s}_{N,\epsilon}(H,\{F_n\}_{n=1}^{\infty},\Phi)$ is decreasing when $N$ increases,
	the following limit exists.
	Set
	$$L^{s}_{\epsilon}(H,\{F_n\}_{n=1}^{\infty},\Phi)=\lim_{N\to \infty}L^{s}_{N,\epsilon}(H,\{F_n\}_{n=1}^{\infty},\Phi).$$
	Put 
	\begin{align*}
		\mathscr{L}^{s}_{\epsilon}(H,\{F_n\}_{n=1}^{\infty},\Phi)&=\inf \{{\sum_{i=1}^{\infty}L^{s}_{\epsilon}(H_i,\{F_n\}_{n=1}^{\infty},\Phi):H \subset \cup_{i=1}^{\infty} H_i}\},\\
		\mathscr{L}_{\epsilon}(H,\{F_n\}_{n=1}^{\infty},\Phi)&=\sup{\{s:\mathscr{L}^{s}_{\epsilon}(H,\{F_n\}_{n=1}^{\infty},\Phi)=\infty\}}\\
		&=\inf{\{s:\mathscr{L}^{s}_{\epsilon}(H,\{F_n\}_{n=1}^{\infty},\Phi)=0\}},\\
		dim_H^{\widetilde{BSP}}(\{F_n\}_{n=1}^{\infty},\Phi)&=\lim_{\epsilon \to 0}	\mathscr{L}_{\epsilon}(H,\{F_n\}_{n=1}^{\infty},\Phi).
	\end{align*}
	Since $\mathscr{L}_{\epsilon}(H,\{F_n\}_{n=1}^{\infty},\Phi)$ is increasing when $\epsilon$ decreases, the above limit exists. Then we call $dim_H^{\widetilde{BSP}}(\{F_n\}_{n=1}^{\infty},\Phi)$ {\it BS packing dimension} of the set $H$ along $\{F_n\}_{n=1}^\infty$.
	When $\Phi=1$,  $dim_H^{\widetilde{BSP}}(\{F_n\}_{n=1}^{\infty},1)$ is also the packing topological entropy $h_{top}^{P}(H,\{F_n\}_{n=1}^{\infty})$ given by Dou, Zheng and Zhou \cite{dou2023packing}.
	
	Analogous to Definition \ref{bs},  the notion of  BS packing dimension along $\{F_n\}_{n=1}^{\infty}$ can also be given in an alternative way. Given $B_{F_{n_i}}(x_i,\epsilon)$, we can replace $\sup_{y\in \overline{B}_{F_{n_i}}(x_i,\epsilon)}\Phi_{F_{n_i}}(y)$ by $\Phi_{F_{n_i}}(x_i)$ in equation (\ref{eq5.1}) to give a new definition. We denote by $\widetilde{L}^{s}_{N,\epsilon}(H,\{F_n\}_{n=1}^{\infty},\Phi)$, $\widetilde{\mathscr{L}}_{\epsilon}(H,\{F_n\}_{n=1}^{\infty},\Phi)$ and $\widetilde{dim}_H^{\widetilde{BSP}}(\{F_n\}_{n=1}^{\infty},\Phi)$ the new corresponding quantities of \\ $L^{s}_{N,\epsilon}(H,\{F_n\}_{n=1}^{\infty},\Phi)$, $\mathscr{L}_{\epsilon}(H,\{F_n\}_{n=1}^{\infty},\Phi)$ and $dim_H^{\widetilde{BSP}}(\{F_n\}_{n=1}^{\infty},\Phi)$, respectively.

	\begin{proposition}\label{inter-1-1}
		Let   $H\subset X$ be a non-empty subset and $\Phi: X\rightarrow \mathbb{R}$ be a  positive continuous function. Then $${dim}_H^{\widetilde{BSP}}(\{F_n\}_{n=1}^{\infty},\Phi)=\widetilde{dim}_H^{\widetilde{BSP}}(\{F_n\}_{n=1}^{\infty},\Phi).$$
	\end{proposition}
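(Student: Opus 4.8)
The plan is to mimic the argument of Proposition \ref{inter}, where the corresponding statement for BS dimension was established, adapting it to the packing setting. The key observation is that for any positive continuous $\Phi$ and any $\epsilon>0$, the quantity $\Phi_{\epsilon}=\sup\{|\Phi(x)-\Phi(y)|:d(x,y)<2\epsilon\}$ is small when $\epsilon$ is small (by uniform continuity of $\Phi$ on the compact space $X$), and that for any $x$ and any $F\in\mathbb{F}(G)$ one has $|\Phi_{F}(y)-\Phi_{F}(x)|\le |F|\Phi_{\epsilon}$ for $y\in \overline B_{F}(x,\epsilon)$. Writing $\hat{\Phi}=\min_{x\in X}\Phi(x)>0$, this gives the two-sided comparison
\[
\Big(1-\tfrac{\Phi_{\epsilon}}{\hat{\Phi}}\Big)\sup_{y\in\overline B_{F}(x,\epsilon)}\Phi_{F}(y)\ \le\ \Phi_{F}(x)\ \le\ \sup_{y\in\overline B_{F}(x,\epsilon)}\Phi_{F}(y),
\]
valid once $\epsilon$ is small enough that $\Phi_{\epsilon}<\hat{\Phi}$. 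The second inequality is trivial since $x\in\overline B_{F}(x,\epsilon)$; the first comes from $\sup_{y}\Phi_{F}(y)\le \Phi_{F}(x)+|F|\Phi_{\epsilon}\le \Phi_{F}(x)+\tfrac{\Phi_{F}(x)}{\hat{\Phi}}\Phi_{\epsilon}$.

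First I would record the easy inequality. Since $\Phi_{F_{n_i}}(x_i)\le \sup_{y\in\overline B_{F_{n_i}}(x_i,\epsilon)}\Phi_{F_{n_i}}(y)$, we get $\exp(-s\sup_y\Phi_{F_{n_i}}(y))\le \exp(-s\Phi_{F_{n_i}}(x_i))$ for every $s>0$, hence $L^{s}_{N,\epsilon}(H,\{F_n\},\Phi)\le \widetilde L^{s}_{N,\epsilon}(H,\{F_n\},\Phi)$ over the same family of disjoint closed Bowen balls centered in $H$. Passing to the limit in $N$, then taking the infimum over countable covers $H\subset\bigcup H_i$, then reading off the critical value of $s$, and finally letting $\epsilon\to 0$, yields $dim_H^{\widetilde{BSP}}(\{F_n\},\Phi)\le \widetilde{dim}_H^{\widetilde{BSP}}(\{F_n\},\Phi)$.

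For the reverse inequality I would run the comparison in the other direction. Fix a small $\epsilon>0$ with $\Phi_{\epsilon}<\hat{\Phi}$ and set $\rho_{\epsilon}=1-\tfrac{\Phi_{\epsilon}}{\hat{\Phi}}\in(0,1)$. For any disjoint family $\{\overline B_{F_{n_i}}(x_i,\epsilon)\}_{i\in I}$ with $x_i\in H$, $n_i\ge N$, the first displayed inequality gives $\Phi_{F_{n_i}}(x_i)\ge \rho_{\epsilon}\sup_{y\in\overline B_{F_{n_i}}(x_i,\epsilon)}\Phi_{F_{n_i}}(y)$, so $\exp(-s\Phi_{F_{n_i}}(x_i))\le \exp(-s\rho_{\epsilon}\sup_y\Phi_{F_{n_i}}(y))$. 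Summing over $i$ and taking the supremum over such families gives $\widetilde L^{s}_{N,\epsilon}(H,\{F_n\},\Phi)\le L^{s\rho_{\epsilon}}_{N,\epsilon}(H,\{F_n\},\Phi)$. Letting $N\to\infty$, then infimizing over countable covers $H\subset\bigcup H_i$ (using the same cover on both sides and subadditivity of the infimum), gives $\widetilde{\mathscr L}^{s}_{\epsilon}(H,\{F_n\},\Phi)\le \mathscr L^{s\rho_{\epsilon}}_{\epsilon}(H,\{F_n\},\Phi)$. Comparing critical values, $\rho_{\epsilon}\,\widetilde{\mathscr L}_{\epsilon}(H,\{F_n\},\Phi)\le \mathscr L_{\epsilon}(H,\{F_n\},\Phi)$; letting $\epsilon\to 0$ so that $\rho_{\epsilon}\to 1$ yields $\widetilde{dim}_H^{\widetilde{BSP}}(\{F_n\},\Phi)\le dim_H^{\widetilde{BSP}}(\{F_n\},\Phi)$, completing the proof.

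The only point requiring a little care — and what I expect to be the main (minor) obstacle — is the bookkeeping across the outer infimum in the definition of $\mathscr L$: one must verify that replacing $s$ by $s\rho_{\epsilon}$ commutes appropriately with the infimum over covers and with taking the critical value, i.e. that $\mathscr L^{s}_{\epsilon}$ genuinely has a single critical exponent and that the scaling argument transfers cleanly to $\mathscr L_{\epsilon}$. This is entirely parallel to the argument already carried out for $M_{\epsilon}$ versus $\widetilde M_{\epsilon}$ in Proposition \ref{inter}, so I would simply remark that the details are analogous and omit the routine repetition, exactly as done in Proposition \ref{inter-1}.
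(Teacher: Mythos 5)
Your proposal is correct and takes essentially the approach the paper intends: the paper itself omits the proof of Proposition \ref{inter-1-1}, saying only that it is similar to Proposition \ref{inter}, and your argument is precisely that adaptation (the two-sided comparison $\Phi_{F}(x)\le\sup_{y\in\overline B_F(x,\epsilon)}\Phi_F(y)\le(1-\Phi_\epsilon/\hat\Phi)^{-1}\Phi_F(x)$, transferred through $L^s_{N,\epsilon}$, the outer infimum defining $\mathscr L^s_\epsilon$, the critical exponent, and the limit $\epsilon\to 0$). The bookkeeping you flag at the end is indeed routine and goes through exactly as you describe.
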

	\begin{proof}
		It is similar to Proposition \ref{inter}, here we omit it.
		
	\end{proof}

	\begin{proposition}\label{prop 5.2}
		Let $\Phi: X\rightarrow \mathbb{R}$ be a  positive continuous function.  For any $H \subset X,$
		$${dim}_H^{\widetilde{BS}}(\{F_n\}_{n=1}^{\infty},\Phi) \leq {dim}_H^{\widetilde{BSP}}(\{F_n\}_{n=1}^{\infty},\Phi).$$
	\end{proposition}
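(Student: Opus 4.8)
The plan is to compare the two dimensions at the level of the underlying covering/packing set functions, exploiting the fact that a single Bowen ball is, in particular, a trivial disjoint packing family. Concretely, fix $\epsilon>0$ and a non-empty subset $H\subset X$. The key elementary observation is that for any $x\in H$, $n\in\mathbb N$ and $s>0$ one has
$$
\exp\!\Big(-s\sup_{y\in B_{F_n}(x,\epsilon)}\Phi_{F_n}(y)\Big)\ \le\ \exp\!\Big(-s\sup_{y\in \overline B_{F_n}(x,\epsilon)}\Phi_{F_n}(y)\Big),
$$
since $B_{F_n}(x,\epsilon)\subset\overline B_{F_n}(x,\epsilon)$ and $\Phi>0$ (so enlarging the supremum set can only increase $\sup\Phi_{F_n}$, hence decrease the exponential). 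This means that the single pair $\{\overline B_{F_n}(x,\epsilon)\}$, regarded as a one-element disjoint family, already witnesses a lower bound for $L^s_{N,\epsilon}$.

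The main steps, in order, would be as follows. First I would recall that by Proposition~\ref{inter} and Proposition~\ref{inter-1-1} it suffices to work with either the ``sup'' or the ``$x_i$-point'' versions of the set functions; I will use the $\sup$-versions throughout for uniformity. Second, I would show that for every cover $\{B_{F_{n_i}}(x_i,\epsilon)\}_{i\in I}$ of $H$ appearing in the definition of $M^s_{N,\epsilon}(H,\{F_n\}_{n=1}^{\infty},\Phi)$ one can discard the balls disjoint from $H$ and replace each centre by a point of $H$ inside the ball, passing to balls of radius $2\epsilon$ in the usual way; this lets me compare a $2\epsilon$-cover sum against a (sub)family of $\epsilon$-balls centred in $H$. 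Third — and this is the conceptual heart — I would turn such a (sub)cover into a disjoint $\epsilon$-packing of $H$ by a Vitali-type / greedy argument: iteratively pick a ball of largest index $n_i$ among those still meeting the uncovered part of $H$, add it to the packing, and delete all balls meeting it; the standard $5r$-covering lemma (Lemma~\ref{cover}, applied with the Bowen metric $d_{F_n}$) guarantees the selected balls form a disjoint family whose $5\epsilon$-dilations still cover $H$. Summing the exponential weights over this disjoint family gives a lower bound for $L^s_{N,5\epsilon}(H,\{F_n\}_{n=1}^{\infty},\Phi)$ in terms of the original cover sum, up to the harmless radius inflation $\epsilon\mapsto 5\epsilon$ that disappears in the limit $\epsilon\to 0$. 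Fourth, feeding this into the definitions: one obtains $M^B$-type (Hausdorff-like) mass dominated by $L$-type (pre-packing) mass on every piece, hence $M^{\mathcal P}$-type mass bounded by $\mathscr L$-type mass after the countable-decomposition infimum, so that the critical exponents satisfy $M_\epsilon(H,\cdot)\le \mathscr L_{5\epsilon}(H,\cdot)$; letting $\epsilon\to 0$ and invoking monotonicity in $\epsilon$ yields $\mathrm{dim}_H^{\widetilde{BS}}\le \mathrm{dim}_H^{\widetilde{BSP}}$.

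Alternatively — and this may in fact be the cleaner route to write up — one bypasses the covering lemma entirely by first proving the inequality for the ``upper'' version: the packing construction shows $W_\epsilon(H,\cdot)\le \mathscr L_\epsilon(H,\cdot)$ more directly when one compares the weighted BS dimension (Definition~\ref{weighted}) with the pre-packing quantity $L^s_\epsilon$, because a disjoint family is automatically an admissible weighted family with all weights equal to $1$; then Proposition~\ref{equality-1} identifies $\mathrm{dim}_H^{\widetilde{BS,WB}}$ with $\mathrm{dim}_H^{\widetilde{BS}}$, and taking $\epsilon\to 0$ finishes the argument. Under the standing Følner hypothesis $\lim_{n\to\infty}|F_n|/\log n=\infty$ (which we may assume here, or else note that the inequality for the unweighted BS dimension needs no Følner growth condition and follows from the greedy argument above), this gives the claim.

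The step I expect to be the main obstacle is the third one: making the greedy/Vitali selection genuinely work with the Bowen metrics $d_{F_{n_i}}$, whose radii $n_i$ vary over the family, so that ``largest ball first'' must be interpreted in terms of the index $n_i$ rather than a geometric radius, and one must check that the $5\epsilon$-dilations $\overline B_{F_{n_i}}(x_i,5\epsilon)$ — and in particular the comparison of $\sup_{\overline B_{F_{n_i}}(\cdot,5\epsilon)}\Phi_{F_{n_i}}$ with $\sup_{B_{F_{n_i}}(\cdot,\epsilon)}\Phi_{F_{n_i}}$ — only cost a factor that is controlled by the modulus of continuity term $\Phi_{F_n}(\epsilon)/|F_n|\to 0$ exactly as in the proof of Proposition~\ref{equality-1}. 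Once that bookkeeping is in place the rest is a routine passage through the definitions of the critical values and the limits $N\to\infty$, $\epsilon\to 0$.
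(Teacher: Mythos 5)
Your core idea --- produce a disjoint family of closed Bowen $\epsilon$-balls centred in $H$ whose dilated balls cover $H$, compare the resulting cover sum with the packing sum, and then push the inequality through the countable-decomposition infimum defining $\mathscr{L}^s_{\epsilon}$ --- is exactly the mechanism of the paper's proof, so the strategy is sound. But the paper obtains such a family far more cheaply than you do: for a \emph{fixed} $n\ge N$ it takes a disjoint family $\{\overline B_{F_n}(x_i,\epsilon)\}$, $x_i\in H$, of maximal cardinality, and maximality alone forces $H\subset\bigcup_i B_{F_n}(x_i,2\epsilon+\delta)$; this gives $\widetilde M^s_{n,2\epsilon+\delta}(H,\{F_n\},\Phi)\le\sum_i e^{-s\Phi_{F_n}(x_i)}\le\widetilde L^s_{n,\epsilon}(H,\{F_n\},\Phi)$ with no covering lemma, no grouping over varying $n_i$, and no Følner growth hypothesis. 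Your route of starting from an arbitrary cover and extracting a Vitali subfamily runs into precisely the obstacle you flag: Lemma \ref{cover} applies to balls of a single metric $d_{F_n}$, so you must partition the cover by the value of $n_i$ and treat each level separately, and the standard repair (weighting level $n$ by $n^{-2}$, as in Proposition \ref{equality-1}) would import the hypothesis $\lim_n|F_n|/\log n=\infty$, which Proposition \ref{prop 5.2} does not assume. Since the cover you start from plays no role in the final inequality, the detour buys nothing; a maximal packing at each fixed $n$ is the cleaner witness.

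Two smaller points. First, your ``key elementary observation'' has the inequality reversed: $B_{F_n}(x,\epsilon)\subset\overline B_{F_n}(x,\epsilon)$ gives $\sup_{B}\Phi_{F_n}\le\sup_{\overline B}\Phi_{F_n}$ and hence $\exp(-s\sup_{B}\Phi_{F_n})\ge\exp(-s\sup_{\overline B}\Phi_{F_n})$. This is not cosmetic: with the sup-versions the $M$-summand for a given ball \emph{dominates} the $L$-summand, which is the wrong direction for $M\le L$ and would force you to reinsert a modulus-of-continuity correction $\Phi_{F_n}(\epsilon)$. The paper sidesteps this entirely by running the whole comparison with the point-centred quantities $\widetilde M$ and $\widetilde L$ (whose summands $e^{-s\Phi_{F_{n}}(x_i)}$ are literally identical on both sides) and only invoking Propositions \ref{inter} and \ref{inter-1-1} at the very end. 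Second, your alternative route through the weighted BS dimension also relies on Proposition \ref{equality-1} and hence again on the Følner growth condition, so it proves a weaker statement than the one asserted.
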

	
	\begin{proof}
		We follow the idea of \cite[Proposition 2.4(4)]{zhong2023variational} to give the proof.  Suppose that $${dim}_H^{\widetilde{BS}}(\{F_n\}_{n=1}^{\infty},\Phi)>s>0.$$ 
		For any $\epsilon>0$  and  $n\in \mathbb{N}$, let 
		$$\mathcal{F} _{F_n,\epsilon}=\{\mathcal{F}:\mathcal{F}=\{\overline B_{F_n}(x_i,\epsilon)\} \text{ disjoint },x_i \in H \}.$$
		Take $\mathcal{F}({F_n},\epsilon,H) \in \mathcal{F} _{F_n,\epsilon}$ such that $\lvert \mathcal{F}({F_n},\epsilon,H) \rvert=\max_{\mathcal{F} \in \mathcal{F} _{F_n,\epsilon}}{\lvert \mathcal{F} \rvert}.$
		For convenience, we denote $\mathcal{F}({F_n},\epsilon,H)=\{\overline B_{F_n}(x_i,\epsilon):i=1,...,\lvert \mathcal{F}({F_n},\epsilon,H) \rvert\}.$
		It is easy to check that
		$$H \subset \bigcup\limits_ {i=1}^{\lvert \mathcal{F}({F_n},\epsilon,H) \rvert}{B_{F_n}}(x_i,2\epsilon+\delta),\forall \delta>0.$$
		Then for any $s \in \mathbb{R},$
		\begin{align*}
			\widetilde{M}^{s}_{n,2\epsilon+\delta}(H,\{F_n\}_{n=1}^{\infty},\Phi) &\leq \sum_{i=1}^{\lvert \mathcal{F}({F_n},\epsilon,H) \rvert}  e^{-s \Phi_{F_n}(x_i)}\\
			&\leq \widetilde{L}^{s}_{n,\epsilon}(H,\{F_n\}_{n=1}^{\infty},\Phi).
		\end{align*}
		This implies that $$\widetilde{M}^{s}_{2\epsilon+\delta}(H,\{F_n\}_{n=1}^{\infty},\Phi) \leq \widetilde{\mathscr{L}}^{s}_{\epsilon}(H,\{F_n\}_{n=1}^{\infty},\Phi).$$
		Since ${dim}_H^{\widetilde{BS}}(\{F_n\}_{n=1}^{\infty},\Phi)>s>0$, $\widetilde{M}^{s}_{2\epsilon+\delta}(H,\{F_n\}_{n=1}^{\infty},\Phi) \geq 1$ when $\epsilon$  and $\delta$ are small enough, this follows that, $\widetilde{\mathscr{L}}^{s}_{\epsilon}(H,\{F_n\}_{n=1}^{\infty},\Phi) \geq 1.$
		Hence, we get that
		$\widetilde{\mathscr{L}}_{\epsilon}(H,\{F_n\}_{n=1}^{\infty},\Phi) \geq s$ for $\epsilon$ small enough. 
		Therefore, by Propositions \ref{inter} and  \ref{inter-1-1}, one has ${dim}_H^{\widetilde{BSP}}(\{F_n\}_{n=1}^{\infty},\Phi)\geq s$ and  
		${dim}_H^{\widetilde{BSP}}(\{F_n\}_{n=1}^{\infty},\Phi) \geq {dim}_H^{\widetilde{BS}}(\{F_n\}_{n=1}^{\infty},\Phi).$
	\end{proof}
	
		By the theory of Carath{\'e}odory-Pesin structure, the proof of the  following proposition is standard, one can refer to \cite{pesin2008dimension}.
	\begin{proposition} \label{prop 2.211}	
		(1) If $H_1\subset H_2 \subset X$, then $$dim_{H_1}^{\widetilde{BSP}}(\{F_n\}_{n=1}^{\infty},\Phi)\leq dim_{H_2}^{\widetilde{BSP}}(\{F_n\}_{n=1}^{\infty},\Phi).$$
		
		(2) If $H=\cup_{i\geq 1}H_i$ is a union of  sets $H_i\subset X$, then $$dim_{H}^{\widetilde{BSP}}(\{F_n\}_{n=1}^{\infty},\Phi)=\sup_{i\geq 1}dim_{H_i}^{\widetilde{BSP}}(\{F_n\}_{n=1}^{\infty},\Phi).$$
	\end{proposition}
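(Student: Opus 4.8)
The plan is to run the argument through the successive layers of the Carath\'{e}odory--Pesin construction that defines $dim_H^{\widetilde{BSP}}$, invoking at each layer only a set-inclusion monotonicity or the standard $\delta 2^{-i}$ diagonalization; throughout I suppress the fixed data $\{F_n\}_{n=1}^{\infty}$ and $\Phi$ from the notation. Recall the layers: the disjoint-family quantity $L^{s}_{N,\epsilon}(H)$, its limit $L^{s}_{\epsilon}(H)=\lim_{N\to\infty}L^{s}_{N,\epsilon}(H)$, the regularization $\mathscr{L}^{s}_{\epsilon}(H)=\inf\{\sum_i L^{s}_{\epsilon}(H_i):H\subset\bigcup_i H_i\}$, the critical exponent $\mathscr{L}_{\epsilon}(H)=\inf\{s:\mathscr{L}^{s}_{\epsilon}(H)=0\}$, and finally $dim_H^{\widetilde{BSP}}=\lim_{\epsilon\to0}\mathscr{L}_{\epsilon}(H)$. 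Since $\Phi$ is positive, $\sup_{y\in\overline B_{F_n}(x,\epsilon)}\Phi_{F_n}(y)>0$, so $s\mapsto L^{s}_{N,\epsilon}(H)$ is non-increasing, and hence so are $s\mapsto L^{s}_{\epsilon}(H)$ and $s\mapsto\mathscr{L}^{s}_{\epsilon}(H)$; I would record this monotonicity in $s$ once at the start, as it is used repeatedly.

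For part (1), suppose $H_1\subset H_2$. Any finite or countable disjoint family $\{\overline B_{F_{n_i}}(x_i,\epsilon)\}_i$ with centres $x_i\in H_1$ is also admissible for $H_2$, so taking suprema gives $L^{s}_{N,\epsilon}(H_1)\le L^{s}_{N,\epsilon}(H_2)$, and letting $N\to\infty$, $L^{s}_{\epsilon}(H_1)\le L^{s}_{\epsilon}(H_2)$. Likewise any countable cover of $H_2$ is a cover of $H_1$, so passing to infima yields $\mathscr{L}^{s}_{\epsilon}(H_1)\le\mathscr{L}^{s}_{\epsilon}(H_2)$. In particular $\mathscr{L}^{s}_{\epsilon}(H_2)=0$ forces $\mathscr{L}^{s}_{\epsilon}(H_1)=0$, so $\{s:\mathscr{L}^{s}_{\epsilon}(H_2)=0\}\subset\{s:\mathscr{L}^{s}_{\epsilon}(H_1)=0\}$ and hence $\mathscr{L}_{\epsilon}(H_1)\le\mathscr{L}_{\epsilon}(H_2)$; sending $\epsilon\to0$ gives $dim_{H_1}^{\widetilde{BSP}}\le dim_{H_2}^{\widetilde{BSP}}$.

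For part (2), the inequality $\sup_{i\ge1}dim_{H_i}^{\widetilde{BSP}}\le dim_H^{\widetilde{BSP}}$ is immediate from part (1) since each $H_i\subset H$. For the reverse inequality the crucial step is the countable subadditivity of $\mathscr{L}^{s}_{\epsilon}$: given $\delta>0$, choose for each $i$ a countable cover $H_i\subset\bigcup_j H_{i,j}$ with $\sum_j L^{s}_{\epsilon}(H_{i,j})\le\mathscr{L}^{s}_{\epsilon}(H_i)+\delta 2^{-i}$ (the inequality to be proved is trivial if the right-hand side sum is infinite, so one may assume each $\mathscr{L}^{s}_{\epsilon}(H_i)$ finite); then $\{H_{i,j}\}_{i,j}$ covers $H$, so $\mathscr{L}^{s}_{\epsilon}(H)\le\sum_{i,j}L^{s}_{\epsilon}(H_{i,j})\le\sum_i\mathscr{L}^{s}_{\epsilon}(H_i)+\delta$, and letting $\delta\to0$ gives $\mathscr{L}^{s}_{\epsilon}(H)\le\sum_i\mathscr{L}^{s}_{\epsilon}(H_i)$. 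Now put $s_*=\sup_{i\ge1}dim_{H_i}^{\widetilde{BSP}}$; there is nothing to prove if $s_*=\infty$, so assume $s_*<\infty$ and fix $s>s_*$. Because $\mathscr{L}_{\epsilon}(H_i)$ increases as $\epsilon$ decreases, with supremum $dim_{H_i}^{\widetilde{BSP}}$, we have $\mathscr{L}_{\epsilon}(H_i)\le dim_{H_i}^{\widetilde{BSP}}\le s_*<s$ for every $\epsilon>0$ and every $i$; combined with the monotonicity of the exponent this gives $\mathscr{L}^{s}_{\epsilon}(H_i)=0$. By the subadditivity just established, $\mathscr{L}^{s}_{\epsilon}(H)=0$, hence $\mathscr{L}_{\epsilon}(H)\le s$ for every $\epsilon>0$, hence $dim_H^{\widetilde{BSP}}=\lim_{\epsilon\to0}\mathscr{L}_{\epsilon}(H)\le s$. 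Letting $s\downarrow s_*$ completes the proof.

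I do not expect a genuine obstacle here: every step is either a set-inclusion monotonicity or the standard $\delta 2^{-i}$ diagonalization producing countable subadditivity, exactly as in the general Carath\'{e}odory--Pesin framework of \cite{pesin2008dimension}. The one point deserving a moment's care is the interchange of $\sup_i$ with the limit $\epsilon\to0$ in part (2); this is legitimate precisely because $\mathscr{L}_{\epsilon}(H_i)$ is monotone in $\epsilon$, so $dim_{H_i}^{\widetilde{BSP}}$ dominates $\mathscr{L}_{\epsilon}(H_i)$ uniformly in $\epsilon$ and the cover produced at scale $\epsilon$ can be controlled by the single number $s_*$ independently of $\epsilon$.
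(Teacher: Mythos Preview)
Your proposal is correct and is precisely the standard Carath\'{e}odory--Pesin argument the paper has in mind: the paper does not spell out a proof but simply states that the result follows from the general theory in \cite{pesin2008dimension}, and what you have written is exactly that argument made explicit for this particular construction. The monotonicity in part~(1) and the $\delta 2^{-i}$ subadditivity plus the $\epsilon$-monotonicity in part~(2) are the expected ingredients, and your handling of them is clean.
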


	Analogous to Theorem \ref{bowen} and \cite[Theorem 3.6]{wang2012variational}, we also can prove that the BS packing dimension is the unique root of packing topological pressure function under amenable group actions. Actually, we extend the results of  \cite{wang2012variational}  to packing BS dimension under amenable group actions by below Theorem \ref{packing}.
	
	\begin{theorem}[Bowen’s packing pressure equation] \label{packing}
		For any positive continuous function  $\Phi: X \rightarrow \mathbb{R} $, we have  $dim_H^{\widetilde{BSP}}(\{F_n\}_{n=1}^{\infty},\Phi)=t $, where  $t $ is the unique root of the equation  $P_{H}^{P}(\{F_n\}_{n=1}^{\infty}, -t \Phi)=0 .$
	\end{theorem}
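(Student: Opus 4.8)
The plan is to follow the scheme of Barreira--Schmeling and Wang--Chen \cite{barreira2000sets,wang2012variational}, adapted to the amenable setting: for a fixed $t>0$ I would compare, scale by scale, the Carath\'eodory structure defining the packing topological pressure $P^{P}(\epsilon,H,-t\Phi)$ (whose pressure parameter I call $s$) with the one defining $dim_{H}^{\widetilde{BSP}}$ (whose exponent is $t$), and thereby show that the function $g(t):=P_{H}^{P}(\{F_n\}_{n=1}^{\infty},-t\Phi)$ is positive for $t<dim_{H}^{\widetilde{BSP}}(\{F_n\}_{n=1}^{\infty},\Phi)$, negative for $t>dim_{H}^{\widetilde{BSP}}(\{F_n\}_{n=1}^{\infty},\Phi)$, and continuous; this yields the unique root. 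Write $\hat\Phi=\min_{x\in X}\Phi(x)>0$ and $\|\Phi\|_{\infty}=\max_{x\in X}\Phi(x)<\infty$. Since $\Phi_{F_n}(x)=\sum_{g\in F_n}\Phi(gx)$, we have $|F_n|\hat\Phi\le\Phi_{F_n}(x)\le|F_n|\,\|\Phi\|_{\infty}$ for all $x\in X$ and all $n$, and hence
\begin{align*}
e^{-s|F_n|-t\Phi_{F_n}(x)}&\ \ge\ e^{-(t+s/\hat\Phi)\Phi_{F_n}(x)}\qquad(s>0),\\
e^{\,u|F_n|-t\Phi_{F_n}(x)}&\ \le\ e^{-(t-u/\hat\Phi)\Phi_{F_n}(x)}\qquad(u>0).
\end{align*}
The key point is that the families over which the suprema in $M^{P}(N,s,\epsilon,\cdot,-t\Phi)$ and in $\widetilde{L}^{s'}_{N,\epsilon}(\cdot,\{F_n\}_{n=1}^{\infty},\Phi)$ are taken coincide --- finite or countable disjoint collections $\{\overline{B}_{F_{n_i}}(x_i,\epsilon)\}$ with $n_i\ge N$ and centres in the set --- so the two displayed pointwise estimates pass term by term, and then survive $\lim_{N\to\infty}$ and the infimum over countable covers $H\subset\bigcup_i H_i$. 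This gives, for every $\epsilon>0$,
\begin{align*}
\widetilde{\mathscr{L}}^{\,t+s/\hat\Phi}_{\epsilon}(H,\{F_n\}_{n=1}^{\infty},\Phi)&\le M^{\mathcal{P}}(s,\epsilon,H,-t\Phi)\qquad(s>0),\\
M^{\mathcal{P}}(-u,\epsilon,H,-t\Phi)&\le \widetilde{\mathscr{L}}^{\,t-u/\hat\Phi}_{\epsilon}(H,\{F_n\}_{n=1}^{\infty},\Phi)\qquad(u>0).
\end{align*}

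Next I would read off the sign of $g$. Put $d:=dim_{H}^{\widetilde{BSP}}(\{F_n\}_{n=1}^{\infty},\Phi)=\widetilde{dim}_{H}^{\widetilde{BSP}}(\{F_n\}_{n=1}^{\infty},\Phi)$ (\Cref{inter-1-1}), and recall that $\widetilde{\mathscr{L}}_{\epsilon}(H,\{F_n\}_{n=1}^{\infty},\Phi)$ increases to $d$ as $\epsilon\downarrow0$, so $\widetilde{\mathscr{L}}_{\epsilon}\le d$ for all $\epsilon$. If $t<d$, then for all sufficiently small $\epsilon$ one has $\widetilde{\mathscr{L}}_{\epsilon}>\tfrac{t+d}{2}$; taking $s_{0}=\hat\Phi(d-t)/4>0$ gives $t+s_{0}/\hat\Phi<\tfrac{t+d}{2}<\widetilde{\mathscr{L}}_{\epsilon}$, so the left side of the first inequality is $\infty$, whence $M^{\mathcal{P}}(s_{0},\epsilon,H,-t\Phi)=\infty$ and $P^{P}(\epsilon,H,-t\Phi)\ge s_{0}$; letting $\epsilon\to0$ yields $g(t)\ge s_{0}>0$. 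If $t>d$, take $u_{0}=\hat\Phi(t-d)/2>0$, so $t-u_{0}/\hat\Phi=\tfrac{t+d}{2}>d\ge\widetilde{\mathscr{L}}_{\epsilon}$ for every $\epsilon$; then the right side of the second inequality is $0$, whence $M^{\mathcal{P}}(-u_{0},\epsilon,H,-t\Phi)=0$ and $P^{P}(\epsilon,H,-t\Phi)\le-u_{0}$; letting $\epsilon\to0$ gives $g(t)\le-u_{0}<0$.

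To pin the root at $d$ and get uniqueness I would invoke the standard Lipschitz bound: if $\Psi_{1}\le\Psi_{2}\le\Psi_{1}+\delta$ pointwise with $\delta$ a constant, then $(\Psi_{2})_{F_n}\le(\Psi_{1})_{F_n}+\delta|F_n|$ gives $M^{P}(N,s,\epsilon,H_i,\Psi_{2})\le M^{P}(N,s-\delta,\epsilon,H_i,\Psi_{1})$, hence $P^{P}(\epsilon,H,\Psi_{2})\le P^{P}(\epsilon,H,\Psi_{1})+\delta$; applied with $\Psi_{1}=-t_{2}\Phi$, $\Psi_{2}=-t_{1}\Phi$ ($t_{1}<t_{2}$, $\delta=(t_{2}-t_{1})\|\Phi\|_{\infty}$) together with monotonicity of pressure in the potential, this yields $0\le g(t_{1})-g(t_{2})\le(t_{2}-t_{1})\|\Phi\|_{\infty}$. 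Combined with the one-sided bounds of the previous step (which force $g$ to be finite on both sides of $d$, hence everywhere), $g$ is a finite, non-increasing, continuous function on $(0,\infty)$; since $g>0$ on $(0,d)$ and $g<0$ on $(d,\infty)$, continuity forces $g(d)=0$, and $g$ has no other zero. Therefore $d=dim_{H}^{\widetilde{BSP}}(\{F_n\}_{n=1}^{\infty},\Phi)$ is the unique root of $P_{H}^{P}(\{F_n\}_{n=1}^{\infty},-t\Phi)=0$, as claimed.

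The main obstacle is the bookkeeping in the first step: matching the packing-pressure structure (parameter $s$, with the explicit $-s|F_{n_i}|$ term) to the BS-packing-dimension structure (exponent $t$, with no such term) through the substitution $s\leftrightarrow t\pm s/\hat\Phi$, and verifying carefully that the elementary term-wise inequalities are not lost when one successively applies $\lim_{N\to\infty}$, the infimum over countable covers defining $M^{\mathcal{P}}$, and $\lim_{\epsilon\to0}$. Once the two displayed structure inequalities are in place, the sign analysis and the Lipschitz continuity are entirely routine; one could alternatively highlight the exact identity $M^{\mathcal{P}}(0,\epsilon,H,-t\Phi)=\widetilde{\mathscr{L}}^{\,t}_{\epsilon}(H,\{F_n\}_{n=1}^{\infty},\Phi)$, which already makes the link between $P^{P}(\epsilon,H,-t\Phi)$ and $\widetilde{\mathscr{L}}_{\epsilon}$ transparent and only needs the $s/\hat\Phi$-shift to upgrade ``$\ge0$'' and ``$\le0$'' to strict inequalities.
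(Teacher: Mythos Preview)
Your proposal is correct and follows exactly the route the paper indicates: the paper does not give a detailed proof but refers to \cite[Theorem~3.6]{wang2012variational} and the analogous \Cref{bowen}, and your argument is precisely the Barreira--Schmeling/Wang--Chen comparison of Carath\'eodory structures carried out in the amenable setting. The term-by-term estimates via $|F_n|\hat\Phi\le\Phi_{F_n}(x)$, their survival under $\sup$, $\lim_{N\to\infty}$, and the covering infimum, and the Lipschitz bound for $t\mapsto P_H^P(\{F_n\},-t\Phi)$ are all standard and correctly stated; the identity $M^{\mathcal P}(0,\epsilon,H,-t\Phi)=\widetilde{\mathscr L}^{\,t}_\epsilon(H,\{F_n\},\Phi)$ that you mention at the end is indeed the cleanest way to see the link.
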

	
	\begin{proof}
		It is similar to Theorem \ref{bowen}, here we leave this proof to readers.	Moreover, one can refer to \cite[Theorem 3.6]{wang2012variational} for similar methods under $\mathbb{Z}_{+}$-actions.
	\end{proof}
	Analogous to subsection \ref{subsec 2.6}, we can also give the notions of BS packing dimension in the sense of Katok.
	\subsection{BS packing dimension in the sense of Katok}
	
	\begin{definition}\label{def 5.5}
		Let 
		$$dim_{\mu}^{\widetilde{BSP}}(\{F_n\}_{n=1}^{\infty},\Phi,\epsilon)=\lim_{\delta\rightarrow 0} \inf\{\widetilde{\mathscr{L}}_{\epsilon}(H,\{F_n\}_{n=1}^{\infty},\Phi):\mu(H)\geq 1-\delta\}.$$
		
		We call the following quantity
		$$dim_{\mu}^{\widetilde{BSP}}(\{F_n\}_{n=1}^{\infty},\Phi):=\lim_{\epsilon \to 0}dim_{\mu}^{\widetilde{BSP}}(\{F_n\}_{n=1}^{\infty},\Phi,\epsilon)$$
		BS packing dimension of $\mu$ along  $\{F_n\}_{n=1}^{\infty}$.
	\end{definition}

	\begin{definition}\label{def 5.6}
		Let $$\Xi^s_{\epsilon}(\mu,\{F_n\}_{n=1}^{\infty},\Phi,\delta)=\inf\{\sum_{i=1}^{\infty}\widetilde{L}^{s}_{\epsilon}(H_i,\{F_n\}_{n=1}^{\infty},\Phi):\mu(\cup_{i=1}^\infty H_i)\geq 1-\delta\}$$ 
		and
		$$\Xi_{\epsilon}(\mu,\{F_n\}_{n=1}^{\infty},\Phi,\delta)=\sup\{s:\Xi^s_{\epsilon}(\mu,\{F_n\}_{n=1}^{\infty},\Phi,\delta)=+\infty\}.$$

		We call the following quantity
		$$dim_{\mu}^{\widetilde{KP}}(\{F_n\}_{n=1}^{\infty},\Phi)=\lim_{\epsilon \to 0}dim_{\mu}^{\widetilde{KP}}(\{F_n\}_{n=1}^{\infty},\Phi,\epsilon)$$ BS packing dimension of $\mu$ in the sense of Katok along $\{F_n\}_{n=1}^{\infty}$, where  $$dim_{\mu}^{\widetilde{KP}}(\{F_n\}_{n=1}^{\infty},\Phi,\epsilon)=\lim_{\delta\rightarrow 0} \Xi_{\epsilon}(\mu,\{F_n\}_{n=1}^{\infty},\Phi,\delta).$$ 
	\end{definition}
	
	\begin{remark}
    Definition \ref{def 5.5} and Definition \ref{def 5.6} are natural generalizations associated with some notions of  packing topological entropy given in \cite{wang2021some}.
	\end{remark}
	
	\begin{proposition}\label{prop 2.10}
		Let $\mu \in \mathcal{M} (X)$,   $\Phi: X \rightarrow \mathbb{R} $ be a  positive continuous function.
		Then, for $\epsilon>0$,
		$$dim_{\mu}^{\widetilde{BSP}}(\{F_n\}_{n=1}^{\infty},\Phi,\epsilon)=dim_{\mu}^{\widetilde{KP}}(\{F_n\}_{n=1}^{\infty},\Phi,\epsilon).$$
		As a direct result, $$dim_{\mu}^{\widetilde{BSP}}(\{F_n\}_{n=1}^{\infty},\Phi)=dim_{\mu}^{\widetilde{KP}}(\{F_n\}_{n=1}^{\infty},\Phi).$$
	\end{proposition}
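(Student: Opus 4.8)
Fix $\epsilon>0$ and $\delta\in(0,1)$, write $\mathcal H_\delta=\{H\subset X:\mu(H)\ge 1-\delta\}$ (with $\mu$ of a possibly non-Borel set understood as outer measure), and for brevity suppress the arguments $\{F_n\}_{n=1}^{\infty},\Phi$ in the notation. The plan is to prove the $\epsilon$-level identity $dim_{\mu}^{\widetilde{BSP}}(\{F_n\}_{n=1}^{\infty},\Phi,\epsilon)=dim_{\mu}^{\widetilde{KP}}(\{F_n\}_{n=1}^{\infty},\Phi,\epsilon)$ and then let $\epsilon\to0$. Since both $\epsilon$-versions are built from the \emph{same} ``tilde'' packing premeasure $\widetilde L^{s}_{\epsilon}(\cdot)$, everything reduces to comparing the measure-theoretic outer critical value $\Xi_{\epsilon}(\mu,\delta)$ with $\inf_{H\in\mathcal H_\delta}\widetilde{\mathscr L}_{\epsilon}(H)$ for each fixed $\delta$.

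First I would establish the set-level identity $\Xi^{s}_{\epsilon}(\mu,\delta)=\inf_{H\in\mathcal H_\delta}\widetilde{\mathscr L}^{s}_{\epsilon}(H)$ for every $s>0$. For ``$\le$'', given a countable family $\{H_i\}_{i\ge1}$ with $\mu(\bigcup_i H_i)\ge 1-\delta$, put $H=\bigcup_i H_i$; then $H\in\mathcal H_\delta$ and $\{H_i\}$ is a cover of $H$, so $\widetilde{\mathscr L}^{s}_{\epsilon}(H)\le\sum_i\widetilde L^{s}_{\epsilon}(H_i)$, and taking infima gives the inequality. For ``$\ge$'', any countable cover $\{H_i\}$ of an $H\in\mathcal H_\delta$ satisfies $\mu(\bigcup_i H_i)\ge\mu(H)\ge1-\delta$, so it is admissible in the definition of $\Xi^{s}_{\epsilon}$; taking infima first over covers of $H$ and then over $H\in\mathcal H_\delta$ yields the reverse inequality.

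Next I would record that $s\mapsto\widetilde L^{s}_{\epsilon}(H)$ is non-increasing (because $\Phi$ is positive, so each $e^{-s\Phi_{F_{n_i}}(x_i)}$ is), hence so are $s\mapsto\widetilde{\mathscr L}^{s}_{\epsilon}(H)$ and, by the previous paragraph, $s\mapsto\Xi^{s}_{\epsilon}(\mu,\delta)$. Using only this monotonicity and the set-level identity, I would deduce that the critical values coincide, $\Xi_{\epsilon}(\mu,\delta)=\inf_{H\in\mathcal H_\delta}\widetilde{\mathscr L}_{\epsilon}(H)$. Indeed, if $\Xi^{s}_{\epsilon}(\mu,\delta)=\infty$ then $\widetilde{\mathscr L}^{s}_{\epsilon}(H)=\infty$ for every $H\in\mathcal H_\delta$, whence $\widetilde{\mathscr L}_{\epsilon}(H)\ge s$ for all such $H$, i.e. $\inf_{H\in\mathcal H_\delta}\widetilde{\mathscr L}_{\epsilon}(H)\ge s$; as this holds for every $s$ with $\Xi^{s}_{\epsilon}(\mu,\delta)=\infty$, we get $\inf_{H\in\mathcal H_\delta}\widetilde{\mathscr L}_{\epsilon}(H)\ge\Xi_{\epsilon}(\mu,\delta)$. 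Conversely, if $s<\inf_{H\in\mathcal H_\delta}\widetilde{\mathscr L}_{\epsilon}(H)$ then for each $H\in\mathcal H_\delta$ there is $t>s$ with $\widetilde{\mathscr L}^{t}_{\epsilon}(H)=\infty$, hence $\widetilde{\mathscr L}^{s}_{\epsilon}(H)=\infty$ by monotonicity, so $\Xi^{s}_{\epsilon}(\mu,\delta)=\infty$ and $\Xi_{\epsilon}(\mu,\delta)\ge s$; letting $s$ increase gives the reverse inequality.

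Finally, taking $\delta\to0$ gives $dim_{\mu}^{\widetilde{KP}}(\{F_n\}_{n=1}^{\infty},\Phi,\epsilon)=dim_{\mu}^{\widetilde{BSP}}(\{F_n\}_{n=1}^{\infty},\Phi,\epsilon)$ by Definitions \ref{def 5.5} and \ref{def 5.6}, and then taking $\epsilon\to0$ yields $dim_{\mu}^{\widetilde{BSP}}(\{F_n\}_{n=1}^{\infty},\Phi)=dim_{\mu}^{\widetilde{KP}}(\{F_n\}_{n=1}^{\infty},\Phi)$, the $\epsilon$-limits existing by monotonicity in $\epsilon$. There is no deep obstacle; the only point demanding care is the critical-value bookkeeping in the third paragraph — correctly matching the jump points of the two monotone one-parameter families $\Xi^{s}_{\epsilon}(\mu,\delta)$ and $\inf_{H}\widetilde{\mathscr L}^{s}_{\epsilon}(H)$ — together with the routine convention that $\mu$ be treated as an outer measure so that non-measurable covering pieces in the first step cause no trouble.
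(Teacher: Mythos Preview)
Your proof is correct and follows essentially the same approach as the paper: both arguments hinge on the observation that the covering families admissible in the definition of $\Xi^{s}_{\epsilon}(\mu,\delta)$ and in $\inf_{H\in\mathcal H_\delta}\widetilde{\mathscr L}^{s}_{\epsilon}(H)$ coincide, though you isolate this as an explicit set-level identity before passing to critical values while the paper works directly at the critical-value level. One cosmetic slip: your direction labels ``$\le$'' and ``$\ge$'' in the second paragraph are swapped (the argument you mark ``$\le$'' actually establishes $\inf_{H\in\mathcal H_\delta}\widetilde{\mathscr L}^{s}_{\epsilon}(H)\le\Xi^{s}_{\epsilon}(\mu,\delta)$, and the one marked ``$\ge$'' the reverse), but this does not affect the validity of the argument.
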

	
	\begin{proof}
		(1)	We first prove that $dim_{\mu}^{\widetilde{KP}}(\{F_n\}_{n=1}^{\infty},\Phi,\epsilon) \leq dim_{\mu}^{\widetilde{BSP}}(\{F_n\}_{n=1}^{\infty},\Phi,\epsilon).$
		For any $s < dim_{\mu}^{\widetilde{KP}}(\{F_n\}_{n=1}^{\infty},\Phi,\epsilon),$ there exists  $\delta'>0$ such that for any $\delta \in (0,\delta'),$
		$$\Xi_{\epsilon}(\mu,\{F_n\}_{n=1}^{\infty},\Phi,\delta)>s.$$
		Thus
		$$ \Xi_{\epsilon}^{s}(\mu,\{F_n\}_{n=1}^{\infty},\Phi,\delta)=\infty.$$
		If $H\subset \cup_{i=1}^\infty H_i$ with $\mu(H) \geq 1-\delta,$   then $\mu(\cup_{i=1}^\infty H_i) \geq 1-\delta.$
		It follows that
		$$\sum_{i=1}^{\infty}\widetilde{L}^{s}_{\epsilon}(H_i,\{F_n\}_{n=1}^{\infty},\Phi)=\infty,$$
		which implies that $\widetilde{\mathscr{L}}_{\epsilon}^{s}(H,\{F_n\}_{n=1}^{\infty},\Phi)=\infty.$
		Hence, $$\widetilde{\mathscr{L}}_{\epsilon}(H,\{F_n\}_{n=1}^{\infty},\Phi) \geq s$$ and $$dim_{\mu}^{\widetilde{BSP}}(\{F_n\}_{n=1}^{\infty},\Phi,\epsilon) \geq s.$$ Letting $s\to dim_{\mu}^{\widetilde{KP}}(\{F_n\}_{n=1}^{\infty},\Phi,\epsilon), $ this shows that $$dim_{\mu}^{\widetilde{KP}}(\{F_n\}_{n=1}^{\infty},\Phi,\epsilon) \leq dim_{\mu}^{\widetilde{BSP}}(\{F_n\}_{n=1}^{\infty},\Phi,\epsilon).$$
		
		(2)	Next, we shall show the inverse inequality. If $s<dim_{\mu}^{\widetilde{BSP}}(\{F_n\}_{n=1}^{\infty},\Phi,\epsilon),$ then there exists $\delta' >0$ such that for any  $\delta \in (0,\delta'),$
		for any family $\{H_i\}_{i=1}^\infty$ with $\mu(\cup_{i=1}^\infty H_i) \geq 1-\delta,$ we have
		$$\widetilde{\mathscr{L}}_{\epsilon}(\cup_{i=1}^\infty H_i,\{F_n\}_{n=1}^{\infty},\Phi)>s.$$
		This implies that
		$$\widetilde{\mathscr{L}}_{\epsilon}^{s}(\cup_{i=1}^\infty H_i,\{F_n\}_{n=1}^{\infty},\Phi)=\infty.$$
		Thus, one has
		$$\sum_{i=1}^\infty \widetilde{L}_{\epsilon}^{s}( H_i,\{F_n\}_{n=1}^{\infty},\Phi)=\infty,$$
		and
		$$ \Xi^s_{\epsilon}(\mu,\{F_n\}_{n=1}^{\infty},\Phi,\delta)=\infty.$$
		Hence
		$$\Xi_{\epsilon}(\mu,\{F_n\}_{n=1}^{\infty},\Phi,\delta)>s.$$
		Letting $\delta\to0, s\to dim_{\mu}^{\widetilde{BSP}}(\{F_n\}_{n=1}^{\infty},\Phi,\epsilon),$ then we derive that $dim_{\mu}^{\widetilde{BSP}}(\{F_n\}_{n=1}^{\infty},\Phi,\epsilon)\leq dim_{\mu}^{\widetilde{KP}}(\{F_n\}_{n=1}^{\infty},\Phi,\epsilon)$.
		The proof is finished.
	\end{proof}

	Analogous to Theorem \ref{billing-1}, we can establish the Billingsley type theorem of packing BS dimension under amenable group actions. 
	\begin{theorem}\label{billing-2}
		Let  $\{F_n\}_{n=1}^{\infty}$ be a Følner  sequence in $G$ satisfying $\lim\limits_{n \to \infty}\frac{|F_n|}{\log n}=\infty $,  $\mu \in {M}(X), H \subset X$  and  let $\Phi: X \to \mathbb{R}$  be a positive continuous function. For each  $s \in(0, \infty) $, then
		
		(1) if  $\overline{P}_{\mu}^{\widetilde{BK}}(\{F_n\}_{n=1}^{\infty},\Phi,x) \leq s$  for all  $x \in H $, then  $dim_H^{\widetilde{BSP}}(\{F_n\}_{n=1}^{\infty},\Phi) \leq s $;
		
		(2) if  $\overline{P}_{\mu}^{\widetilde{BK}}(\{F_n\}_{n=1}^{\infty},\Phi,x) \geq s$  for all  $x \in H $, and  $\mu(H)>0 $, then  $dim_H^{\widetilde{BSP}}(\{F_n\}_{n=1}^{\infty},\Phi) \geq s $.	
		
	\end{theorem}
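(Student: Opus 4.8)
The plan is to work throughout with the tilded quantities: by \Cref{inter-1-1}, $dim_H^{\widetilde{BSP}}(\{F_n\}_{n=1}^{\infty},\Phi)=\widetilde{dim}_H^{\widetilde{BSP}}(\{F_n\}_{n=1}^{\infty},\Phi)$, so it suffices to estimate $\widetilde{\mathscr{L}}_{\epsilon}$, $\widetilde{L}^{s}_{\epsilon}$ and $\widetilde{L}^{s}_{N,\epsilon}$. Write $\hat{\Phi}=\min_{x\in X}\Phi(x)>0$, so that $\Phi_{F_{n}}(x)\ge\hat{\Phi}|F_{n}|$ for all $x$ and $\Phi_{F_{n}}(x)\to\infty$ uniformly. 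For part (1), fix $s'>s$. Since $\overline{P}_{\mu}^{\widetilde{BK}}(\{F_n\}_{n=1}^{\infty},\Phi,x)\le s<s'$ for every $x\in H$ and this quantity dominates $\limsup_{n}\tfrac{-\log\mu(B_{F_{n}}(x,\epsilon))}{\Phi_{F_{n}}(x)}$, one has $H=\bigcup_{N}H_{N}$ with $H_{N}=\{x\in H:\mu(B_{F_{n}}(x,\epsilon))>e^{-s'\Phi_{F_{n}}(x)}\text{ for all }n\ge N\}$. If $\{\overline{B}_{F_{n_{j}}}(x_{j},\epsilon)\}_{j}$ is a disjoint family with $x_{j}\in H_{N}$ and $n_{j}\ge M\ge N$, the open balls $B_{F_{n_{j}}}(x_{j},\epsilon)$ are pairwise disjoint, so $\sum_{j}e^{-s'\Phi_{F_{n_{j}}}(x_{j})}\le\sum_{j}\mu(B_{F_{n_{j}}}(x_{j},\epsilon))\le1$; thus $\widetilde{L}^{s'}_{M,\epsilon}(H_{N},\{F_n\}_{n=1}^{\infty},\Phi)\le1$, and for $s''>s'$ the estimate $e^{-s''\Phi_{F_{n_{j}}}(x_{j})}\le e^{-(s''-s')\hat{\Phi}|F_{M}|}e^{-s'\Phi_{F_{n_{j}}}(x_{j})}$ gives $\widetilde{L}^{s''}_{M,\epsilon}(H_{N},\{F_n\}_{n=1}^{\infty},\Phi)\le e^{-(s''-s')\hat{\Phi}|F_{M}|}\to0$ as $M\to\infty$, hence $\widetilde{L}^{s''}_{\epsilon}(H_{N},\{F_n\}_{n=1}^{\infty},\Phi)=0$ for every $N$. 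Using $\{H_{N}\}_{N}$ as a cover of $H$, $\widetilde{\mathscr{L}}^{s''}_{\epsilon}(H,\{F_n\}_{n=1}^{\infty},\Phi)=0$, so $\widetilde{\mathscr{L}}_{\epsilon}(H,\{F_n\}_{n=1}^{\infty},\Phi)\le s''$; letting $s''\downarrow s'\downarrow s$ and $\epsilon\downarrow0$ yields $dim_H^{\widetilde{BSP}}(\{F_n\}_{n=1}^{\infty},\Phi)\le s$.

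For part (2), fix $\sigma<s$. For $k\in\mathbb{N}$ put $H^{(k)}=\{x\in H:\limsup_{n}\tfrac{-\log\mu(B_{F_{n}}(x,5/k))}{\Phi_{F_{n}}(x)}>\sigma\}$; these sets increase with $k$ and, since $\overline{P}_{\mu}^{\widetilde{BK}}(\{F_n\}_{n=1}^{\infty},\Phi,x)\ge s>\sigma$ on $H$, their union is $H$. By continuity of $\mu$ from below on increasing sets (interpreting $\mu$ as an outer measure) there is $k_{0}$ with $\mu(H^{(k_{0})})>0$; set $\epsilon_{0}=1/k_{0}$ and fix $0<\epsilon<\epsilon_{0}$, so every $x\in H^{(k_{0})}$ is \emph{good at infinitely many scales}, i.e.\ there are infinitely many $n$ with $\mu(B_{F_{n}}(x,5\epsilon))<e^{-\sigma\Phi_{F_{n}}(x)}$. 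The crucial claim is: for every $Z\subseteq H^{(k_{0})}$ with $\mu(Z)>0$ one has $\widetilde{L}^{\sigma}_{\epsilon}(Z,\{F_n\}_{n=1}^{\infty},\Phi)\ge\mu(Z)$. Granting it, for any cover $H\subseteq\bigcup_{i}H_{i}$ we have $\sum_{i}\mu(H_{i}\cap H^{(k_{0})})\ge\mu(H^{(k_{0})})$, hence by monotonicity of $\widetilde{L}^{\sigma}_{\epsilon}$ and the claim $\sum_{i}\widetilde{L}^{\sigma}_{\epsilon}(H_{i},\{F_n\}_{n=1}^{\infty},\Phi)\ge\mu(H^{(k_{0})})>0$; thus $\widetilde{\mathscr{L}}^{\sigma}_{\epsilon}(H,\{F_n\}_{n=1}^{\infty},\Phi)>0$, so $\widetilde{\mathscr{L}}_{\epsilon}(H,\{F_n\}_{n=1}^{\infty},\Phi)\ge\sigma$, and letting $\epsilon\downarrow0$ and $\sigma\uparrow s$ gives $dim_H^{\widetilde{BSP}}(\{F_n\}_{n=1}^{\infty},\Phi)\ge s$.

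To prove the crucial claim I would fix $N$ and construct a pairwise-disjoint family $\mathcal{G}$ of closed Bowen balls of index $\ge N$ by sweeping the scales $n=N,N+1,N+2,\dots$ in order: at scale $n$ one considers the points $x\in Z$ that are good at scale $n$, not yet covered by a $5\epsilon$-enlargement of an earlier chosen ball, and with $\overline{B}_{F_{n}}(x,\epsilon)$ disjoint from all earlier chosen balls, applies \Cref{cover} to the $d_{F_{n}}$-balls $B_{F_{n}}(x,\epsilon)$ over those $x$ to extract a finite pairwise-disjoint subfamily whose $5\epsilon$-enlargements cover all of them, adjoins it to $\mathcal{G}$, and continues. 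Since every $x\in Z$ is good at infinitely many scales, every point of $Z$ is eventually covered by a $5\epsilon$-enlargement of a ball of $\mathcal{G}$, so summing the defining inequality of ``good'' over $\mathcal{G}$ gives $\mu(Z)\le\sum_{(\overline{B}_{F_{n}}(y,\epsilon))\in\mathcal{G}}\mu(B_{F_{n}}(y,5\epsilon))<\sum_{\mathcal{G}}e^{-\sigma\Phi_{F_{n}}(y)}\le\widetilde{L}^{\sigma}_{N,\epsilon}(Z,\{F_n\}_{n=1}^{\infty},\Phi)$, and letting $N\to\infty$ proves the claim. The same level-by-level scheme also delivers the (easier) BS-packing analogue of Billingsley's lower bound already used implicitly via \Cref{prop 2.211}.

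The main obstacle is exactly this scale-by-scale selection: one must keep the chosen Bowen balls pairwise disjoint across \emph{all} scales while still guaranteeing that their $5\epsilon$-enlargements capture every point of $Z$, and Bowen balls at different scales are delicate to compare for a general Følner sequence — discarding at scale $n$ the balls that meet an earlier chosen ball can, a priori, strand points of $Z$. The standard remedy is to stratify the construction by scale and to insert a summable sequence of weights $c_{n}\sim 1/n^{2}$, paying a polynomial-in-$n$ factor that must be reabsorbed by trading the exponent $\sigma$ for a slightly smaller $\sigma'$ and invoking $e^{(\sigma-\sigma')\hat{\Phi}|F_{n}|}\ge n^{2}$ for all large $n$ (whence $\widetilde{L}^{\sigma'}_{N,\epsilon}(Z,\cdot)\ge\mu(Z)$, and then $\sigma'\uparrow\sigma\uparrow s$); this last inequality is precisely where the hypothesis $\lim_{n\to\infty}|F_{n}|/\log n=\infty$ is used, exactly as in the proof of \Cref{equality-1}. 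Part (1) and all the outer-measure bookkeeping are routine; it is this amenable covering combinatorics in part (2) that carries the real content.
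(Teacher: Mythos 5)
Your proposal is correct and ultimately takes the same route as the paper: part (1) is the same stratification of $H$ into sets where $\mu(B_{F_n}(x,\epsilon))\ge e^{-s'\Phi_{F_n}(x)}$ for all $n\ge N$ followed by the telescoping estimate against the disjointness of the packing, and for part (2) the ``remedy'' you settle on after (rightly) discarding the multi-scale greedy sweep — decompose $Z$ into the sets $Z_n$ of points good at scale $n$, pigeonhole a single $n\ge N$ with $\mu^*(Z_n)\ge\frac{1}{n(n+1)}\mu^*(Z)$, apply \Cref{cover} at that one scale, and reabsorb the factor $n(n+1)$ through a small drop in the exponent via $\lim_n|F_n|/\log n=\infty$ — is precisely the paper's proof of the corresponding claim for Borel $E\subset H_m$ with $\mu(E)>0$. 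The only cosmetic differences are the normalization of radii (you pack at $\epsilon$ and enlarge to $5\epsilon$ with goodness measured at $5/k$, while the paper packs at $\epsilon/10$ and enlarges to $\epsilon/2$ with goodness at $\epsilon$), which is immaterial after $\epsilon\to 0$.
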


	\begin{proof}
		(1)	Fix $\beta>s$, set
		$$
		H_{m}=\left\{x \in H: \limsup _{n \rightarrow \infty} \frac{-\log \mu\left(B_{F_n}(x, \epsilon)\right)}{ \Phi_{F_n}(x)}<\frac{s+\beta}{2}, \text { for any } \epsilon \in\left(0, \frac{1}{m}\right)\right\}.$$
		
		Since  $\overline{P}_{\mu}^{\widetilde{BK}}(\{F_n\}_{n=1}^{\infty},\Phi,x) \leq s$  for all  $x \in H $, then  $H=\bigcup_{m=1}^{\infty} H_{m} .$ Fix  $m \geq 1$  and  $\epsilon \in\left(0, \frac{1}{m}\right) $, for any  $x \in H_{m} $, there exists $N\in \mathbb{N}$ so that for any $n\geq N$, 
		$$
		\mu\left(B_{F_{n}}(x, \epsilon)\right) \geq \exp \left(-\left(\frac{s+\beta}{2}\right) \Phi_{F_n}(x)\right).
		$$
		Let
		$$H_{m,N}=\left\{x \in H_m:\mu(B_{F_n}(x,\epsilon)) \geq  \exp \left(-\left(\frac{s+\beta}{2}\right) \Phi_{F_n}(x)\right),\forall n \geq N,\epsilon \in (0,\frac{1}{m})\right\}.$$
		
		It is clear that $H_m=\bigcup_{N=1}^{\infty}H_{m,N}.$
		Given $\epsilon>0,$ $N\in\mathbb N$ and $J \geq N.$
		Let  $\mathcal{F} =\{B_{F_{n_i}}(x_i,\epsilon)\}_{i \in I},$ where $x_i \in H_{m,N},$ $n_i \geq J $  be a finite or countable disjoint family.
		\begin{align*}
			\sum_{i}e^{-\beta \Phi_{F_{n_i}}(x_i) }&=\sum_{i}e^{-(\frac{\beta+s}{2}+\frac{\beta-s}{2})\Phi_{F_{n_i}}(x_i)}\\
			&\leq e^{-\lvert F_{J} \rvert\hat{\Phi}(\frac{\beta-s}{2})}\sum_{i}e^{-\frac{\beta+s}{2} \Phi_{F_{n_i}}(x_i)}\\
			&\leq e^{-\lvert F_{J} \rvert\hat{\Phi}(\frac{\beta-s}{2})}\sum_{i}\mu(\overline B_{F_{n_i}}(x_i,\epsilon))\\
			&\leq e^{-\lvert F_{J} \rvert\hat{\Phi}(\frac{\beta-s}{2})}.
		\end{align*}
		It follows that 
		$$\widetilde{L}^{\beta}_{J,\epsilon}(H_{m,N},\{F_n\}_{n=1}^{\infty},\Phi) \leq e^{-\lvert F_{J} \rvert\hat{\Phi}(\frac{\beta-s}{2})}.$$
		Letting $J\to\infty,$ we have $\widetilde{L}^{\beta}_{\epsilon}(H_{m,N},\{F_n\}_{n=1}^{\infty},\Phi)=0,$ which implies that $$\widetilde{\mathscr{L}}^{\beta}_{\epsilon}(H_{m},\{F_n\}_{n=1}^{\infty},\Phi)=0.$$
		Hence,  $\widetilde{\mathscr{L}}_{\epsilon}(H_{m},\{F_n\}_{n=1}^{\infty},\Phi) \leq \beta.$
		Letting $\epsilon\to0,$ and by Proposition \ref{inter-1-1}, it follows that $$dim_{H_m}^{\widetilde{BSP}}(\{F_n\}_{n=1}^{\infty},\Phi) \leq \beta.$$
		
		Therefore, $dim_{H}^{\widetilde{BSP}}(\{F_n\}_{n=1}^{\infty},\Phi) \leq \sup_m dim_{H_m}^{\widetilde{BSP}}(\{F_n\}_{n=1}^{\infty},\Phi) \leq\beta,$ 
		Letting $\beta\to s,$ then we get that  $dim_H^{\widetilde{BSP}}(\{F_n\}_{n=1}^{\infty},\Phi) \leq s $.
		
		(2) 
		Fix $\beta <s.$ Let $\delta=\frac{s-\beta}{2}$
		and
		$$H_m=\{x \in H:\limsup_{n \to \infty}\frac{-\log\mu(B_{F_n}(x,\frac{1}{m}))}{\Phi_{F_n}(x)} > \beta + \delta\}.$$
		Then $H=\bigcup_{m=1}^{\infty}H_m.$
		Since $\mu(H)>0$ and $H_n\subset H_{n+1}, n\in\mathbb N,$ there exists $m\in\mathbb N$ such that $\mu(H_m)>0.$ For any  $\epsilon \in (0,\frac{1}{m})$ and $ x \in H_m$, we have
		$$ \limsup_{n \to \infty}\frac{-\log\mu(B_{F_n}(x,\epsilon))}{\Phi_{F_n}(x)}>\beta +\delta.$$
		Next we claim that $\widetilde{\mathscr{L}}^{s}_{\frac{\epsilon}{10}}(H_{m},\{F_n\}_{n=1}^{\infty},\Phi)=\infty,$ also from Proposition \ref{inter-1-1}, this implies that 
		\begin{align*}
			dim_H^{\widetilde{BSP}}(\{F_n\}_{n=1}^{\infty},\Phi) &\geq dim_{H_m}^{\widetilde{BSP}}(\{F_n\}_{n=1}^{\infty},\Phi)\\ &\geq \widetilde{\mathscr{L}}_{\frac{\epsilon}{10}}(H_{m},\{F_n\}_{n=1}^{\infty},\Phi)\\ &\geq s.
		\end{align*}
		To this end, it suffices to show that $\widetilde{L}^{s}_{\frac{\epsilon}{10}}(E,\{F_n\}_{n=1}^{\infty},\Phi)=\infty$ for any Borel subset $E \subset H_m$ with $\mu (E)>0.$
		In fact, for  $E\subset H_m$ with $\mu (E)>0,$
		let 
		$$E_n=\{x\in E:\mu(B_{F_n}(x,\epsilon))<e^{-(\beta+\delta)\Phi_{F_{n}}(x)}\},~n \in \mathbb{N}.$$
		It is clear that $E=\bigcup_{n=N}^{\infty}E_n$ for each $N \in \mathbb{N}. $ Then $\mu(\bigcup_{n=N}^{\infty}E_n)=\mu(E).$
		Hence there exists $n \geq N$ such that
		$$\mu(E_n) \geq \frac{1}{n(n+1)}\mu(E).$$
		Fix such $n$ and let $\mathcal{B} =\{\overline B_{F_n}(x,\frac{\epsilon}{10}):x\in E_n\}.$
		By Lemma \ref{cover}, there exists a finite or countable pairwise disjoint family $\{\overline B_{F_n}(x_i,\frac{\epsilon}{10})\}_{i\in I}$ such that
		$$E_n \subset \bigcup_{x \in E_n}\overline B_{F_n}(x,\frac{\epsilon}{10}) \subset \bigcup_{i\in I} \overline B_{F_n}(x_i,\frac{\epsilon}{2}) \subset \bigcup_{i\in I} \overline B_{F_n}(x_i,\epsilon).$$
		Hence,
		\begin{align*}
			\widetilde{L}^{\beta}_{N,\frac{\epsilon}{10}}(E,\{F_n\}_{n=1}^{\infty},\Phi) &\geq \widetilde{L}^{\beta}_{N,\frac{\epsilon}{10}}(E_n,\{F_n\}_{n=1}^{\infty},\Phi)\\ &\geq \sum_{{i\in I}}e^{-\beta\Phi_{F_n}(x_i)}\\
			&\geq  e^{\lvert F_{n} \rvert\hat{\Phi} \delta}\sum_{{i\in I}}e^{-(\beta+\delta)\Phi_{F_n}(x_i)}\\ &=  e^{\lvert F_{n} \rvert\hat{\Phi} \delta}\sum_{{i\in I}}e^{-(\beta+\delta)\Phi_{F_n}(x_i)}\\&\geq e^{\lvert F_{n} \rvert\hat{\Phi} \delta}\sum_{{i\in I}}\mu(B_{F_n}(x_i,\epsilon))\\
			&\geq e^{\lvert F_{n} \rvert \hat{\Phi}\delta}\mu(E_n)\\ 
			&\geq \frac{e^{\lvert F_{n} \rvert \hat{\Phi}\delta}}{n(n+1)}\mu(E).
		\end{align*}	
		Since $\lim\limits_{n \to \infty}\frac{|F_n|}{\log n}=\infty $ and $\mu(E)>0,$ we have
		$$\widetilde{L}^{\beta}_{\frac{\epsilon}{10}}(E,\{F_n\}_{n=1}^{\infty},\Phi)=\infty.$$ This finishes the proof.
		
	\end{proof}

	\begin{proposition}\label{prop 2.13}
		Let $\mu \in \mathcal{M} (X)$, $H \subset X,$   $\Phi: X \rightarrow \mathbb{R} $ be a  positive continuous function.
		We have $$\overline{P}_{\mu}^{\widetilde{BK}}(\{F_n\}_{n=1}^{\infty},\Phi) \leq dim_{\mu}^{\widetilde{KP}}(\{F_n\}_{n=1}^{\infty},\Phi).$$
	\end{proposition}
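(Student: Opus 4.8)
The plan is to mirror the argument of Proposition~\ref{prop 2.5} on the packing side, replacing its covering step by the Vitali-type estimate underlying Theorem~\ref{billing-2}(2); consequently I would use the growth condition $\lim_{n\to\infty}\frac{|F_n|}{\log n}=\infty$ on the Følner sequence, exactly as in that theorem. A convenient simplification is that $\widetilde{L}^{s}_{\epsilon}$ evaluates $\Phi$ at the centres of the Bowen balls, so that—unlike in Proposition~\ref{prop 2.5}—no distortion corrections are needed.

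First I would establish the local estimate: \emph{if $E\subset X$ is Borel with $\mu(E)>0$ and $\limsup_{n\to\infty}\frac{-\log\mu(B_{F_n}(x,10\epsilon))}{\Phi_{F_n}(x)}>\beta+\delta_*$ for every $x\in E$ and some $\delta_*>0$, then $\widetilde{L}^{\beta}_{\epsilon}(E,\{F_n\}_{n=1}^{\infty},\Phi)=\infty$.} For each $N\in\mathbb{N}$ I would write $E=\bigcup_{n\ge N}E_n$ with $E_n=\{x\in E:\mu(B_{F_n}(x,10\epsilon))<e^{-(\beta+\delta_*)\Phi_{F_n}(x)}\}$ (valid because the hypothesis is a $\limsup$), choose $n\ge N$ with $\mu(E_n)\ge\frac{\mu(E)}{n(n+1)}$, and apply Lemma~\ref{cover} to $\{\overline{B}_{F_n}(x,\epsilon):x\in E_n\}$ to extract a disjoint subfamily $\{\overline{B}_{F_n}(x_i,\epsilon)\}_{i\in I}$ with $x_i\in E_n$ and $E_n\subset\bigcup_{i\in I}\overline{B}_{F_n}(x_i,5\epsilon)\subset\bigcup_{i\in I}B_{F_n}(x_i,10\epsilon)$. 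Using $\Phi_{F_n}(x_i)\ge|F_n|\hat{\Phi}$ and the definition of $E_n$,
\[
\widetilde{L}^{\beta}_{N,\epsilon}(E,\{F_n\}_{n=1}^{\infty},\Phi)\ \ge\ \sum_{i\in I}e^{-\beta\Phi_{F_n}(x_i)}\ \ge\ e^{\delta_*|F_n|\hat{\Phi}}\sum_{i\in I}\mu(B_{F_n}(x_i,10\epsilon))\ \ge\ \frac{e^{\delta_*|F_n|\hat{\Phi}}}{n(n+1)}\,\mu(E).
\]
As $N\to\infty$ the chosen $n$ tends to $\infty$, and the growth condition makes the right-hand side tend to $\infty$; since $\widetilde{L}^{\beta}_{N,\epsilon}$ is non-increasing in $N$, this forces $\widetilde{L}^{\beta}_{\epsilon}(E,\{F_n\}_{n=1}^{\infty},\Phi)=\infty$.

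Then I would feed this into the definition of $dim_{\mu}^{\widetilde{KP}}$. Fix $\epsilon>0$ and $\beta<\overline{P}_{\mu}^{\widetilde{BK}}(\{F_n\}_{n=1}^{\infty},\Phi,10\epsilon)$; pick $\delta_*>0$ with $\beta+\delta_*<\overline{P}_{\mu}^{\widetilde{BK}}(\{F_n\}_{n=1}^{\infty},\Phi,10\epsilon)$ and set $E=\{x:\limsup_{n}\frac{-\log\mu(B_{F_n}(x,10\epsilon))}{\Phi_{F_n}(x)}>\beta+\delta_*\}$. Since $\overline{P}_{\mu}^{\widetilde{BK}}(\{F_n\}_{n=1}^{\infty},\Phi,10\epsilon)=\int\limsup_{n}\frac{-\log\mu(B_{F_n}(x,10\epsilon))}{\Phi_{F_n}(x)}\,d\mu>\beta+\delta_*$, we get $\mu(E)>0$. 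Now for any $\delta<\mu(E)$ and any family $\{H_i\}$ with $\mu(\bigcup_iH_i)\ge1-\delta$ there is $i_0$ with $\mu(E\cap H_{i_0})>0$, whence by monotonicity of $\widetilde{L}^{\beta}_{\epsilon}$ and the local estimate applied to $E\cap H_{i_0}$, $\sum_i\widetilde{L}^{\beta}_{\epsilon}(H_i,\{F_n\}_{n=1}^{\infty},\Phi)\ge\widetilde{L}^{\beta}_{\epsilon}(E\cap H_{i_0},\{F_n\}_{n=1}^{\infty},\Phi)=\infty$. Thus $\Xi^{\beta}_{\epsilon}(\mu,\{F_n\}_{n=1}^{\infty},\Phi,\delta)=\infty$, so $\Xi_{\epsilon}(\mu,\{F_n\}_{n=1}^{\infty},\Phi,\delta)\ge\beta$ for all small $\delta$ and therefore $dim_{\mu}^{\widetilde{KP}}(\{F_n\}_{n=1}^{\infty},\Phi,\epsilon)\ge\beta$. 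Letting $\beta\uparrow\overline{P}_{\mu}^{\widetilde{BK}}(\{F_n\}_{n=1}^{\infty},\Phi,10\epsilon)$ and then $\epsilon\to0$ (so $10\epsilon\to0$ and the limits agree) yields $dim_{\mu}^{\widetilde{KP}}(\{F_n\}_{n=1}^{\infty},\Phi)\ge\overline{P}_{\mu}^{\widetilde{BK}}(\{F_n\}_{n=1}^{\infty},\Phi)$.

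The main obstacle is the local estimate, and inside it the passage from a pointwise $\limsup$ condition on all of $E$ to a single level $n\ge N$ carrying a fixed fraction $\tfrac{1}{n(n+1)}$ of $\mu(E)$: this loss is intrinsic to $\limsup$-type sets and can only be absorbed by the factor $e^{\delta_*|F_n|\hat{\Phi}}$ when $|F_n|$ grows faster than $\log n$, which is why the hypothesis $\lim_{n\to\infty}\frac{|F_n|}{\log n}=\infty$ is indispensable here. The remaining ingredients—the open/closed ball conventions and the $5r$-expansion in Lemma~\ref{cover} (the source of the factor $10\epsilon$), monotonicity of $\widetilde{L}$, and the reduction to a positive-measure piece $E\cap H_{i_0}$—are routine and parallel Proposition~\ref{prop 2.5} and Theorem~\ref{billing-2}(2).
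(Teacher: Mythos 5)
Your proposal is correct and follows essentially the same route as the paper: the paper also reduces to finding some $H_{i_0}$ in the cover with $\mu(A\cap H_{i_0})>0$ and then invokes the Vitali-type local estimate $\widetilde{L}^{s}_{\epsilon/10}(A\cap H_{i_0},\{F_n\}_{n=1}^{\infty},\Phi)=\infty$ established inside the proof of Theorem \ref{billing-2}(2), which you simply re-derive in full rather than cite. Your remark that the hypothesis $\lim_{n\to\infty}|F_n|/\log n=\infty$ is indispensable is accurate --- the paper's proof inherits it implicitly through its appeal to Theorem \ref{billing-2}, even though it is not stated in Proposition \ref{prop 2.13} itself.
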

	\begin{proof}
		For any $s < \overline{P}_{\mu}^{\widetilde{BK}}(\{F_n\}_{n=1}^{\infty},\Phi),$ we can find a Borel set $A \subset X$ with $\mu(A)>0$ such that, for any $x \in A,$
		$$\lim_{\epsilon \to 0}\limsup_{n \to \infty}\frac{-\log\mu(B_{F_n}(x,\epsilon))}{\Phi_{F_n}(x)} >s.$$
		Given $\delta \in (0,\mu(A))$ and $\epsilon>0.$ We shall show that $dim_{\mu}^{\widetilde{KP}}(\{F_n\}_{n=1}^{\infty},\Phi,\frac{\epsilon}{10})> s,$ which implies that $dim_{\mu}^{\widetilde{KP}}(\{F_n\}_{n=1}^{\infty},\Phi) \geq s.$
		It suffices to show that
		$$\Xi^s_{\frac{\epsilon}{10}}(\mu,\{F_n\}_{n=1}^{\infty},\Phi,\delta)=\infty.$$
		Let $\{H_i\}_{{i \in I}}$ be a finite or countable family with $\mu(\cup_{i \in I} H_i)>1-\delta.$
		Since
		$$A=(A\cap (\cup_{i \in I}H_i))\cup(A\verb|\|\cup_{i \in I}H_i),$$
		it follows that $\mu(A\cap (\cup_{i \in I}H_i)) \geq \mu(A)-\delta>0.$ Thus there exists $i$ such that $\mu(A\cap H_i)>0.$
		Due to Proposition \ref{billing-2}, we have
		$$\widetilde{L}^{s}_{\frac{\epsilon}{10}}(H_i,\{F_n\}_{n=1}^{\infty},\Phi)  \geq \widetilde{L}^{s}_{\frac{\epsilon}{10}}(A\cap H_i,\{F_n\}_{n=1}^{\infty},\Phi)=\infty.$$
		Thus 
		$$\Xi^s_{\frac{\epsilon}{10}}(\mu,\{F_n\}_{n=1}^{\infty},\Phi,\delta)=\infty.$$

		\section{Variational principle for BS packing dimension under amenable group actions}
		In this section, we shall prove the variational principle for BS packing dimension under amenable group actions, i.e., Theorem \ref{thm 1.1}, and the proof  is divided into two parts: upper bound and lower bound.
		
		\begin{theorem}\label{thm 1.1}(=\Cref{m3})
			Let $\{F_n\}_{n=1}^{\infty}$ be a Følner  sequence in G satisfying $\lim_{n \to \infty}\frac{|F_n|}{\log n}=\infty.$ Then for any non-empty analytic subset $H$ of $X$ and a positive continuous function  $\Phi: X \rightarrow \mathbb{R} $.
			Then 
			\begin{align*}
				dim_H^{\widetilde{BSP}}(\{F_n\}_{n=1}^{\infty},\Phi)&=\sup\{\overline{P}_{\mu}^{\widetilde{BK}}(\{F_n\}_{n=1}^{\infty},\Phi):\mu \in \mathcal{M} (X),\mu(H)=1\}\\
				&=\sup\{dim_{\mu}^{\widetilde{BSP}}(\{F_n\}_{n=1}^{\infty},\Phi,\epsilon):\mu \in \mathcal{M} (X),\mu(H)=1\}\\
				&=\sup\{dim_{\mu}^{\widetilde{KP}}(\{F_n\}_{n=1}^{\infty},\Phi):\mu \in \mathcal{M} (X),\mu(H)=1\}.
			\end{align*}
		\end{theorem}
		
	\end{proof}
	\subsection{Lower bound}
	Using Proposition \ref{prop 2.10} and Proposition \ref{prop 2.13}, this shows that
	\begin{align*}
		&\sup\{\overline{P}_{\mu}^{\widetilde{BK}}(\{F_n\}_{n=1}^{\infty},\Phi):\mu \in \mathcal{M} (X),\mu(H)=1\}\\
		\leq& \sup\{dim_{\mu}^{\widetilde{KP}}(\{F_n\}_{n=1}^{\infty},\Phi):\mu \in \mathcal{M} (X),\mu(H)=1\}\\
		=& \sup\{dim_{\mu}^{\widetilde{BSP}}(\{F_n\}_{n=1}^{\infty},\Phi):\mu \in \mathcal{M} (X),\mu(H)=1\}\\
		\leq &dim_H^{\widetilde{BSP}}(\{F_n\}_{n=1}^{\infty},\Phi).
	\end{align*}
	\subsection{Upper bound}

	\begin{lemma}\label{lem 3.1}
		Let $H \subset X,$ $\epsilon,s >0$. If $L^{s}_{\epsilon}(H,\{F_n\}_{n=1}^{\infty},\Phi)=\infty,$ then for any given finite interval
		$(a,b) \subset [0,+\infty)$ and $N \in \mathbb{N}, $ there exists a finite disjoint collection $\{\overline B_{F_{n_i}}(x_i,\epsilon)\}$ such that $x_i \in H,$ $n_i \geq N,$
		and $\sum_{i}e^{-s\Phi_{F_{n_i}}(x_i)} \in (a,b).$
	\end{lemma}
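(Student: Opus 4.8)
The plan is to derive the lemma from a greedy ``partial-sum'' extraction, preceded by two elementary reductions. First I would observe that $L^{s}_{N,\epsilon}(H,\{F_n\}_{n=1}^{\infty},\Phi)$ is non-increasing in $N$ and has limit $L^{s}_{\epsilon}(H,\{F_n\}_{n=1}^{\infty},\Phi)=+\infty$, so it is in fact $+\infty$ for every $N$ (a non-increasing sequence with infinite limit is identically $+\infty$). Next, since for every admissible disjoint family $\{\overline B_{F_{n_i}}(x_i,\epsilon)\}_{i\in I}$ one has $x_i\in\overline B_{F_{n_i}}(x_i,\epsilon)$ and hence $\sup_{y\in\overline B_{F_{n_i}}(x_i,\epsilon)}\Phi_{F_{n_i}}(y)\ge\Phi_{F_{n_i}}(x_i)$, the defining sums satisfy $\sum_i e^{-s\sup_{y}\Phi_{F_{n_i}}(y)}\le\sum_i e^{-s\Phi_{F_{n_i}}(x_i)}$ for each such family; taking suprema gives $\widetilde L^{s}_{N,\epsilon}(H,\{F_n\}_{n=1}^{\infty},\Phi)\ge L^{s}_{N,\epsilon}(H,\{F_n\}_{n=1}^{\infty},\Phi)=+\infty$ for all $N$, which is the form I will actually use.

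Then I would control the weight of an individual ball. With $\hat\Phi:=\min_{x\in X}\Phi(x)>0$, for $n\ge M$ and any $x$ one has $\Phi_{F_n}(x)=\sum_{g\in F_n}\Phi(gx)\ge|F_n|\hat\Phi\ge|F_M|\hat\Phi$ (using that $|F_n|$ is increasing), so $e^{-s\Phi_{F_n}(x)}\le e^{-s|F_M|\hat\Phi}$; since $|F_M|\to\infty$ and $s,\hat\Phi>0$, I can fix $M\ge N$ so large that $e^{-s|F_M|\hat\Phi}<\tfrac{b-a}{2}$. Using $\widetilde L^{s}_{M,\epsilon}(H,\{F_n\}_{n=1}^{\infty},\Phi)=+\infty>b$, pick a finite or countable disjoint family $\{\overline B_{F_{n_i}}(x_i,\epsilon)\}_{i\in I}$ with $x_i\in H$, $n_i\ge M$ and $\sum_{i\in I} e^{-s\Phi_{F_{n_i}}(x_i)}>b$. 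Listing $I$ and setting $S_0=0$, $S_k=\sum_{i=1}^{k}e^{-s\Phi_{F_{n_i}}(x_i)}$, the partial sums increase to a value $>b$, so $k_1:=\min\{k:S_k>a\}$ exists and is finite with $k_1\ge1$ (as $S_0=0\le a$); by minimality $S_{k_1-1}\le a$, and since the single term just added is $<\tfrac{b-a}{2}$ we get $S_{k_1}\le a+\tfrac{b-a}{2}<b$, i.e.\ $S_{k_1}\in(a,b)$. The first $k_1$ balls then form a finite disjoint collection with $x_i\in H$, $n_i\ge M\ge N$, and $\sum_{i=1}^{k_1}e^{-s\Phi_{F_{n_i}}(x_i)}\in(a,b)$, as required.

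I do not anticipate a genuine obstacle, but the point requiring care is that the interval $(a,b)$ is \emph{open}: adding balls one at a time can only be guaranteed to land inside $(a,b)$ once their individual weights are below $b-a$, which is precisely why one must first restrict to $n_i\ge M$ for large $M$. This preliminary reduction is exactly where the strict positivity of $\Phi$ (so that $\hat\Phi>0$) together with $|F_n|\to\infty$ is used; without it the per-ball weights $e^{-s\Phi_{F_n}(x)}$ need not shrink and the greedy extraction could overshoot $(a,b)$.
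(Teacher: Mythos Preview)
Your proof is correct and follows essentially the same approach as the paper: choose a threshold $M\ge N$ so that every individual weight $e^{-s\Phi_{F_n}(x)}$ with $n\ge M$ is smaller than the length of the target interval, pick an admissible disjoint family with total weight exceeding $b$, and then peel off a finite subfamily whose sum lands in $(a,b)$. The only cosmetic differences are that you build the finite family \emph{from below} via partial sums while the paper \emph{discards from above}, and that you make explicit the passage from $L^{s}_{N,\epsilon}$ (defined with the supremum) to $\widetilde L^{s}_{N,\epsilon}$ (defined with the center value), a step the paper uses implicitly.
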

	\begin{proof}
		Take $N_1>N$ large enough such that $e^{|F_{N_1}|\hat{\Phi}s}<b-a.$ Since $L^{s}_{\epsilon}(H,\{F_n\}_{n=1}^{\infty},\Phi)=\infty,$ it follows that $L^{s}_{N_1,\epsilon}(H,\{F_n\}_{n=1}^{\infty},\Phi)=\infty.$
		There hence exists a finite disjoint collection $\{\overline B_{F_{n_i}}(x_i,\epsilon)\}$ such that $x_i \in H,$ $n_i \geq N_1$ and $\sum_{i}e^{-s\Phi_{F_{n_i}}(x_i)}>b.$
		Since $e^{-s\Phi_{F_{n_i}}(x_i)} \leq e^{|F_{N_1}|\hat{\Phi}s} \leq b-a,$ we can discard elements in this collection one by one until we have $\sum_{i}e^{-s\Phi_{F_{n_i}}(x_i)}  \in (a,b).$
	\end{proof}
	We now turn to show the Upper bound. We employ the approach used by Joyce and Preiss \cite{joyce1995existence} and Feng and Huang in \cite{feng2012variational}. Let $H \subset X$ be analytic with $dim_H^{\widetilde{BSP}}(\{F_n\}_{n=1}^{\infty},\Phi)>0$. For any $dim_H^{\widetilde{BSP}}(\{F_n\}_{n=1}^{\infty},\Phi)>s>0$, there exists a compact set $K \subset H$ and $\mu\in \mathcal{M}(K)$ such that $\overline{P}_{\mu}^{\widetilde{BK}}(\{F_n\}_{n=1}^{\infty},\Phi)\geq s$.

	Since $H$ is analytic, there exists a continuous surjective map $\phi :\mathcal{N} \to H.$ Let  $\Gamma _{n_1,n_2,...,n_p}=\{(m_1,m_2,...)\in\mathcal{N}:m_1 \leq n_1,m_2 \leq n_2,...,m_p \leq n_p \} $
	and let $H_{n_1,...,n_p}=\phi(\Gamma _{n_1,n_2,...,n_p}) .$ 
	Take $\epsilon>0$ small enough so that $\mathscr{L}_{\epsilon}(H,\{F_n\}_{n=1}^{\infty},\Phi)>s>0$. Take $t$ with $s<t<\mathscr{L}_{\epsilon}(H,\{F_n\}_{n=1}^{\infty},\Phi)$.
	
	The construction is divided into the following  several steps:
	
	{\bf Step 1.} Construct $K_1,$ $\mu_1,$ $n_1,$ $\gamma_1,$ and  $m_1(·) .$\\
	Note that $	\mathscr{L}^{t}_{\epsilon}(H,\{F_n\}_{n=1}^{\infty},\Phi)=\infty.$ Let
	$$Q=\bigcup\{U \subset X:U~is~open,\mathscr{L}^{t}_{\epsilon}(H\cap U,\{F_n\}_{n=1}^{\infty},\Phi)=0\}.$$
	Then $\mathscr{L}^{t}_{\epsilon}(H\cap Q,\{F_n\}_{n=1}^{\infty},\Phi)= 0$ by the separability of $X$. Let 
	$$H'=H\verb|\|Q=H\cap(X\verb|\|Q).$$
	For any open set $U \subset X,$ either $H'\cap U=\emptyset$ or $\mathscr{L}^{t}_{\epsilon}(H'\cap U,\{F_n\}_{n=1}^{\infty},\Phi)>0.$
	Indeed, suppose that $\mathscr{L}^{t}_{\epsilon}(H'\cap U,\{F_n\}_{n=1}^{\infty},\Phi)=0.$
	Since $H=H'\cup (H\cap Q),$
	\begin{align*}
		\mathscr{L}^{t}_{\epsilon}(H\cap U,\{F_n\}_{n=1}^{\infty},\Phi) &\leq \mathscr{L}^{t}_{\epsilon}(H'\cap U,\{F_n\}_{n=1}^{\infty},\Phi)\\ &+\mathscr{L}^{t}_{\epsilon}(H\cap Q,\{F_n\}_{n=1}^{\infty},\Phi)\\&=0.
	\end{align*}
	Thus $U \subset Q,$ which implies that $H'\cap U=\emptyset.$
	Since
	$$\mathscr{L}^{t}_{\epsilon}(H,\{F_n\}_{n=1}^{\infty},\Phi) \leq \mathscr{L}^{t}_{\epsilon}(H\cap H,\{F_n\}_{n=1}^{\infty},\Phi)+\mathscr{L}^{t}_{\epsilon}(H',\{F_n\}_{n=1}^{\infty},\Phi) $$
	and $\mathscr{L}^{t}_{\epsilon}(H\cap Q,\{F_n\}_{n=1}^{\infty},\Phi)=0,$ we have
	$$\mathscr{L}^{t}_{\epsilon}(H',\{F_n\}_{n=1}^{\infty},\Phi)=\mathscr{L}^{t}_{\epsilon}(H,\{F_n\}_{n=1}^{\infty},\Phi)=\infty.$$
	Using Lemma \ref{lem 3.1}, we can find a finite set $K_1 \subset H',$ an integer-valued function $m_1(x)$ on $K_1$ such that the collection $ \{\overline B_{F_{m_1(x)}}(x,\epsilon)\}_{x \in K_1}$ is disjoint and
	$$\sum_{x \in K_1}  e^{-s\Phi_{F_{m_1(x)}}(x)} \in (1,2).$$
	Define
	$$\mu_1=\sum_{x \in K_1} e^{-s\Phi_{F_{m_1(x)}}(x)}\delta_x,$$ where $\delta_x$ denotes the Dirac measure at $x.$

	Take a small $\gamma_1$ such that for any function $z:K_1 \to X$ with $d(x,z(x))\leq  \gamma_1,$
	we have for each $x \in K_1,$
	$$(\overline B(z(x),\gamma_1)\cup\overline B_{F_{m_1(x)}}(z(x),\epsilon))\bigcap (\bigcup_{y \in K_1\verb|\|\{x\}}\overline B(z(y),\gamma_1)\cup \overline B_{F_{m_1(y)}}(z(y),\epsilon))=\emptyset.$$
	It follows from $K_1 \subset H'$ that for any $x \in K_1,$
	\begin{align*}
		\mathscr{L}^{t}_{\epsilon}(H \cap B(x,\frac{\gamma_1}{4}),\{F_n\}_{n=1}^{\infty},\Phi) &\geq \mathscr{L}^{t}_{\epsilon}(H' \cap B(x,\frac{\gamma_1}{4}),\{F_n\}_{n=1}^{\infty},\Phi)>0.
	\end{align*}
	Therefore, we can pick a sufficiently large $n_1 \in \mathbb{N}$ so that $K_1 \subset H_{n_1}$ and $\mathscr{L}^{t}_{\epsilon}(H_{n_1} \cap B(x,\frac{\gamma_1}{4}),\{F_n\}_{n=1}^{\infty},\Phi)>0$ for each $x \in K_1.$
	
	{\bf Step 2.} Construct $K_2,$ $\mu_2,$ $n_2,$ $\gamma_2,$ and  $m_2(·) .$\\
	The family of balls $\{\overline B(x,\gamma_1)\}_{x \in K_1}$ are pairwise disjoint. For each $x \in K_1,$ since $\mathscr{L}^{t}_{\epsilon}(H_{n_1} \cap B(x,\frac{\gamma_1}{4}),\{F_n\}_{n=1}^{\infty},\Phi)>0,$
	we can construct a finite set  as in  Step 1  
	$$E_2(x) \subset H_{n_1} \cap B(x,\frac{\gamma_1}{4})$$
	and an integer-valued function
	$$m_2:E_2(x) \to \mathbb{N}\cap[\max\{m_1(y):y \in K_1\},\infty)$$
	such that
	\begin{flushleft}
		(2-a) $\mathscr{L}^{t}_{\epsilon}(H_{n_1} \cap U,\{F_n\}_{n=1}^{\infty},\Phi)>0 ,$ for any open set $U$ with $U\cap E_2(x) \neq \emptyset;$\\
		(2-b) the elements in $\{\overline B_{F_{m_2(y)}}(y,\epsilon)\}_{y \in E_2(x)}$ are disjoint, and
	\end{flushleft}
	$$\mu_1(\{x\})<\sum_{y \in E_2(x)}e^{-s\Phi_{F_{m_2(y)}}(y)}<(1+2^{-2})\mu_1(\{x\}).$$
	Actually, we fix $x \in K_1.$
	Denote $F=H_{n_1} \cap B(x,\frac{\gamma_1}{4}).$ 
	Let
	$$Q_x=\bigcup\{U \subset X:U~is~open,\mathscr{L}^{t}_{\epsilon}(F\cap U,\{F_n\}_{n=1}^{\infty},\Phi)=0\}.$$
	Set 
	$$F'=F\verb|\|Q_x.$$
	Then as in {Step 1,} we can show that
	$$\mathscr{L}^{t}_{\epsilon}(F',\{F_n\}_{n=1}^{\infty},\Phi)=\mathscr{L}^{t}_{\epsilon}(F,\{F_n\}_{n=1}^{\infty},\Phi)>0$$
	and
	$$\mathscr{L}^{t}_{\epsilon}(F'\cap U,\{F_n\}_{n=1}^{\infty},\Phi)>0,$$
	for any open set $U$ with $U \cap F'\neq \emptyset.$ Since $s < t,$
	$$\mathscr{L}^{s}_{\epsilon}(F',\{F_n\}_{n=1}^{\infty},\Phi)=\infty.$$
	Using Lemma \ref{lem 3.1} again, we can find a finite set $ E_2(x) \subset F',$ an integer-valued function $m_2(x)$ on $E_2(x)$ so that (2-b) holds.
	Observe that if $U \cap E_2(x)\neq \emptyset$ and $U$ is open, then $U \cap F'\neq \emptyset.$ Hence
	$$\mathscr{L}^{t}_{\epsilon}(H_{n_1}\cap U,\{F_n\}_{n=1}^{\infty},\Phi) \geq \mathscr{L}^{t}_{\epsilon}(F'\cap U,\{F_n\}_{n=1}^{\infty},\Phi)>0.$$
	Then (2-a) holds.
	Since the family $\{\overline{B}(x,\gamma_1)\}_{x \in K_1}$ is disjoint, $E_2(x) \cap E_2(x')=\emptyset$ for different $x,x' \in K_1.$
	Define $K_2=\bigcup_{x \in K_1}E_2(x)$
	and
	$$\mu_2=\sum_{y \in K_2}e^{-s\Phi_{F_{m_2(y)}}(y)}\delta_y.$$
	The elements in  $\{\overline B_{F_{m_2(y)}}(y,\epsilon)\}_{y \in K_2} $ are disjoint. Hence we can take $\gamma_2 \in (0,\frac{\gamma_1}{4})$ such that
	for any function $z:K_2 \to X$ satisfying $d(x,z(x))<\gamma_2,$ for  $x \in K_2.$
	we have
	$$(\overline B(z(x),\gamma_2)\cup \overline B_{F_{m_2(x)}}(z(x),\epsilon))\bigcap (\bigcup_{y \in K_2\verb|\|\{x\}}\overline B(z(y),\gamma_2)\cup \overline B_{F_{m_2(y)}}(z(y),\epsilon))=\emptyset$$
	for each $x \in K_2.$
	Choose a sufficiently large $n_2 \in \mathbb{N}$ so that $K_2 \subset H_{n_1,n_2}$ and 
	$$\mathscr{L}^{t}_{\epsilon}(H_{n_1,n_2} \cap B(x,\frac{\gamma_2}{4}),\{F_n\}_{n=1}^{\infty},\Phi)>0,$$
	for each $x \in K_2.$
	
	{\bf Step 3.} Assume that $K_i,$ $\mu_i,$ $n_i,$ $\gamma_i,$ and  $m_i(·)$ have been constructed for $i=1,...,p.$
	In particular, suppose that for any function $z:K_p \to X$ with $d(x,z(x))<\gamma_p$ for $x\in K_p$,
	we have
	$(\overline B(z(x),\gamma_p)\cup \overline B_{F_{m_p(x)}}(z(x),\epsilon))\cap (\bigcup_{y \in K_p\verb|\|\{x\}}\overline B(z(y),\gamma_p)\cup \overline B_{F_{m_p(y)}}(z(y),\epsilon))=\emptyset,$
	for each $x \in K_p.$
	Then $K_p \subset H_{n_1...n_p}$ and 
	$$\mathscr{L}^{t}_{\epsilon}(H_{n_1...n_p} \cap B(x,\frac{\gamma_p}{4}),\{F_n\}_{n=1}^{\infty},\Phi)>0,$$
	for each $x \in K_p.$
	The family of balls $\{\overline B(x,\gamma_p)\}_{x \in K_p} $ are pairwise disjoint. For each $x \in K_p,$ since $\mathscr{L}^{t}_{\epsilon}(H_{n_1...n_p} \cap B(x,\frac{\gamma_p}{4}),\{F_n\}_{n=1}^{\infty},\Phi)>0,$
	we can construct as in Step 2 a finite set 
	$$E_{p+1}(x) \subset H_{n_1...n_p} \cap B(x,\frac{\gamma_p}{4})$$
	and an integer-valued function
	$$m_{p+1}:E_{p+1}(x) \to \mathbb{N}\cap[\max\{m_p(y):y \in K_p\},\infty)$$
	such that
	\begin{flushleft}
		(3-a) $\mathscr{L}^{t}_{\epsilon}(H_{n_1...n_p} \cap U,\{F_n\}_{n=1}^{\infty},\Phi)>0,$ for any open set $U$ with $U\cap E_{p+1}(x) \neq \emptyset;$\\
		(3-b) the elements in $\{\overline B_{F_{m_{p+1}(y)}}(y,\epsilon)\}_{y \in E_{p+1}(x)}$ are disjoint, and
	\end{flushleft}
	$$\mu_p(\{x\})<\sum_{y \in E_{p+1}(x)}e^{-s \Phi_{F_{m_{p+1}(y)}}(y)}<(1+2^{-p-1})\mu_p(\{x\}).$$
	Clearly  $E_{p+1}(x) \cap E_{p+1}(y) = \emptyset$
	for different $x,y \in K_p.$
	Define $K_{p+1}=\bigcup_{x \in K_p}E_{p+1}(x)$
	and
	$$\mu_{p+1}=\sum_{y \in K_{p+1}}e^{-s\Phi_{F_{m_{p+1}(y)}}(y)}\delta_y.$$
	The elements in  $\{\overline B_{F_{m_{p+1}(y)}}(y,\epsilon)\}_{y \in K_{p+1}} $ are disjoint. Hence we can take $\gamma_{p+1} \in (0,\frac{\gamma_p}{4})$ such that
	for any function $z:K_{p+1} \to X$ satisfying $d(x,z(x))<\gamma_{p+1},$
	we have for each $x \in K_{p+1}$,
	$$(\overline B(z(x),\gamma_{p+1})\cup \overline B_{F_{m_{p+1}(x)}}(z(x),\epsilon)) \\ \bigcap (\bigcup_{y \in K_{p+1}\verb|\|\{x\}}\overline B(z(y),\gamma_{p+1})\cup \overline B_{F_{m_{p+1}(y)}}(z(y),\epsilon))=\emptyset$$
	
	Choose a sufficiently large $n_{p+1} \in \mathbb{N}$ so that $K_{p+1} \subset H_{n_1...n_{p+1}}$ and 
	$$\mathscr{L}^{t}_{\epsilon}(H_{n_1...n_{p+1}} \cap B(x,\frac{\gamma_{p+1}}{4}),\{F_n\}_{n=1}^{\infty},\Phi)>0,$$
	for each $x \in K_{p+1}.$
	As in above steps, we can construct by introduction $\{K_i\},\{\mu_i\},n_i,\gamma_i,and~m_i(·) .$
	We summarize some of their basic properties as follows:
	
	(a)For each $i$, the family $\mathcal{F}_i:=\{\overline{B}(x,\gamma_i):x \in K_i\}$ is disjoint. For every $B \in \mathcal{F}_{i+1},$ there exists $x \in K_i$ such that $B \subset \overline{B}(x,\frac{\gamma_i}{2}).$ 
	
	(b)For each $x \in K_i$ and $z\in \overline{B}(x,\gamma_i),$ we have
	$$\overline B_{F_{m_{i}(x)}}(z,\epsilon) \cap \bigcup_{y \in K_i\verb|\|\{x\}}  \overline B(y,\gamma_i)=\emptyset$$
	and
	\begin{align*}
		\mu_i( \overline{B}(x,\gamma_i))&=e^{-s\Phi_{F_{m_i(x)}(x)}}<\sum_{y \in E_{i+1}(x)}e^{-s\Phi_{F_{m_{i+1}(y)}}(y)}\\
		&<(1+2^{-i-1})\mu_i( \overline{B}(x,\gamma_i)),
	\end{align*}
	where $E_{i+1}(x)=B(x,\gamma_i) \cap K_{i+1}.$
	Furthermore, for $F_i\in \mathcal{F}_{i}$, one has
	\begin{align*}
		\mu_i(F_i) \leq \mu_{i+1}(F_i)&=\sum_{F\in \mathcal{F}_{i+1}:F \subset F_i }\mu_{i+1}(F)\leq \sum_{F\in \mathcal{F}_{i+1}:F \subset F_i }(1+2^{-i-1})\mu_i(F)\\
		&=(1+2^{-i-1})\sum_{F\in \mathcal{F}_{i+1}:F \subset F_i }\mu_i(F)\\
		&\leq (1+2^{-i-1})\mu_i(F_i).
	\end{align*}
	
	Using the above inequalities repeatedly, we have for any $j>i,$ $F_i \in \mathcal{F}_i,$ 
	\begin{align}\label{3.1}
		\mu_i(F_i) \leq \mu_j(F_i) \leq \prod _{n=i+1}^j (1+2^{-n})\mu_i(F_i) \leq C\mu_i(F_i),
	\end{align}
	where $C:=\prod _{n=1}^\infty(1+2^{-n})<\infty.$
	Let $\widetilde\mu$ be a limit point of $\{\mu_i\}$ in the weak-$*$ topology, let
	$$K=\bigcap_{n=1}^{\infty}\overline{\bigcup_{i \geq n}K_i}.$$
	
	Then, $\widetilde\mu$ is supported on $K$ and $K \subset \bigcap_{p=1}^{+\infty}\overline{H_{n_1,...,n_p}}.$
	By the continuity of $\phi,$ applying the Cantor's diagonal argument, we can show that $$\bigcap_{p=1}^{+\infty}\overline{H_{n_1,...,n_p}} =\bigcap_{p=1}^{+\infty}{H_{n_1,...,n_p}}.$$
	Hence, $K$ is a compact subset of $H$. For any $x \in K_i,$ by (\ref{3.1}), 
	\begin{align*}
		e^{-s\Phi_{F_{m_i(x)}(x)}}&=\mu_i(\overline{B}(x,\gamma_i))\leq \widetilde\mu( \overline{B}(x,\gamma_i))\\
		&\leq C\mu_i(\overline{B}(x,\gamma_i))=Ce^{-s\Phi_{F_{m_i(x)}(x)}}.
	\end{align*}
	In particular,
	\begin{align*}
		1\leq \sum_{x \in K_1}\mu_1({B}(x,\gamma_1)) 
		&\leq\widetilde\mu(K)\leq \sum_{x \in K_1}C\mu_1(B(x,\gamma_1))\leq 2C.
	\end{align*}
	For every $x \in K_i$ and $z \in \overline{B}(x,\gamma_i),$
	$$\widetilde\mu(\overline B_{F_{m_i(x)}}(z,\epsilon))\leq \widetilde \mu(\overline B(x,\frac{\gamma_i}{2})) \leq Ce^{-s\Phi_{F_{m_i(x)}(x)}}.$$
	For each $z \in K$ and $i \in \mathbb{N},$ $z \in \overline B(x,\frac{\gamma_i}{2})$ for some $x \in K_i$. Thus
	$$\widetilde\mu(\overline B_{F_{m_i(x)}}(z,\epsilon)) \leq Ce^{-s\Phi_{F_{m_i(x)}(x)}}.$$
	Let $\mu=\widetilde\mu/\widetilde\mu(K).$ Then $\mu \in \mathcal{M} (K),$ $\mu(K)=1,$ and for every $z \in K,$ 
	there exists a sequence $\{k_i\}$ with $k_i \to \infty$ such that
	$$\mu(B_{F_{k_i}}(z,\epsilon)) \leq \frac{Ce^{-s\Phi_{F_{k_i}(z)}}}{\widetilde\mu(K)}.$$
	This implies that $\overline P_\mu(\{F_n\}_{n=1}^{\infty},f) \geq s.$

	\begin{remark}
	(1)	If $\Phi=1$,  Theorem \ref{thm 1.1} coincides with \cite[Theorem 1.3]{dou2023packing}, so we  enhance some  results about  packing topological entropy under amenable group actions.
	
	(2) Naturally, Theorem \ref{thm 1.1} covers the results established by Wang and Chen for the BS packing dimension under $\mathbb{Z}_{+}$-actions in \cite[Theorem 3.12:(1)]{wang2012variational}.
	\end{remark}
	
	Analogous to inverse variational principle for BS dimension under amenable group actions, i.e. Theorem \ref{inverse}, we can also prove the inverse variational principle for BS packing dimension under amenable group actions by a similar method in Theorem \ref{inverse}, here we leave it to readers and give the result directly. 
	
	\begin{theorem}\label{inverse-packing}(=\Cref{m4})
		Let  $\Phi: X\rightarrow \mathbb{R}$ be a  positive continuous function and  $\mu\in {M}(X)$. Then
		$$
		\begin{aligned}
			dim_{\mu}^{\widetilde{BSP}}(\{F_n\}_{n=1}^{\infty},\Phi)&=dim_{\mu}^{\widetilde{KP}}(\{F_n\}_{n=1}^{\infty},\Phi)\\
			&=  \inf \left\{dim_H^{\widetilde{BSP}}(\{F_n\}_{n=1}^{\infty},\Phi):  \mu(H)=1\right\}\\
			&=\lim\limits_{\epsilon\to 0}\inf \left\{\mathscr{L}_{\epsilon}(H,\{F_n\}_{n=1}^{\infty},\Phi): \mu(H)=1\right\}.
		\end{aligned}
		$$
	\end{theorem}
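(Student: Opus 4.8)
The plan is to transcribe the proof of \Cref{inverse} line by line, with $M_\epsilon$, $\widetilde{M}_\epsilon$, $dim_H^{\widetilde{BS}}$ and $dim_\mu^{\widetilde{K}}$ replaced throughout by $\mathscr{L}_\epsilon$, $\widetilde{\mathscr{L}}_\epsilon$, $dim_H^{\widetilde{BSP}}$ and $dim_\mu^{\widetilde{KP}}$. Before that I would record two auxiliary facts. First, the packing analogue of \Cref{rem2.8}(1): running the argument of \Cref{inter-1-1} with the oscillation bound $\sup_{y\in\overline{B}_{F_n}(x,\epsilon)}\Phi_{F_n}(y)\le|F_n|\Phi_\epsilon+\Phi_{F_n}(x)\le(1+\Phi_\epsilon/\hat{\Phi})\Phi_{F_n}(x)$, where $\hat{\Phi}=\min_{x\in X}\Phi(x)$ and $\Phi_\epsilon=\sup\{|\Phi(x)-\Phi(y)|:d(x,y)<2\epsilon\}$, one gets $(1-\Phi_\epsilon/\hat{\Phi})\,\widetilde{\mathscr{L}}_\epsilon(H,\{F_n\}_{n=1}^{\infty},\Phi)\le\mathscr{L}_\epsilon(H,\{F_n\}_{n=1}^{\infty},\Phi)\le\widetilde{\mathscr{L}}_\epsilon(H,\{F_n\}_{n=1}^{\infty},\Phi)$, and hence, letting $\epsilon\to0$ after $\delta\to0$ in \Cref{def 5.5},
$$dim_\mu^{\widetilde{BSP}}(\{F_n\}_{n=1}^{\infty},\Phi)=\lim_{\epsilon\to0}\lim_{\delta\to0}\inf\{\mathscr{L}_\epsilon(H,\{F_n\}_{n=1}^{\infty},\Phi):\mu(H)\ge1-\delta\}.$$
Second, the countable subadditivity of $s\mapsto\mathscr{L}^s_\epsilon(\cdot)$ built into its definition, together with monotonicity in the set, yields $\mathscr{L}_\epsilon(\bigcup_{i\ge1}H_i,\{F_n\}_{n=1}^{\infty},\Phi)=\sup_{i\ge1}\mathscr{L}_\epsilon(H_i,\{F_n\}_{n=1}^{\infty},\Phi)$, the $\mathscr{L}_\epsilon$-level refinement of \Cref{prop 2.211}(2).

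Write $\mathfrak{T}=dim_\mu^{\widetilde{BSP}}(\{F_n\}_{n=1}^{\infty},\Phi)$. The first step is to prove $\mathfrak{T}=\lim_{\epsilon\to0}\inf\{\mathscr{L}_\epsilon(H,\{F_n\}_{n=1}^{\infty},\Phi):\mu(H)=1\}$. For $\le$: given $\lambda>0$, the formula above produces $\epsilon_0$ so that for $0<\epsilon<\epsilon_0$ and all small $\delta$ one has $\inf\{\mathscr{L}_\epsilon(H):\mu(H)\ge1-\delta\}<\mathfrak{T}+\lambda$; hence for each large $k$ there is $H_k$ with $\mu(H_k)\ge1-\frac{1}{k}$ and $\mathscr{L}_\epsilon(H_k)<\mathfrak{T}+\lambda$, and then $H=\bigcup_k H_k$ satisfies $\mu(H)=1$ and $\mathscr{L}_\epsilon(H)=\sup_k\mathscr{L}_\epsilon(H_k)\le\mathfrak{T}+\lambda$ by the second fact. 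For $\ge$: since $\{H:\mu(H)=1\}\subset\{H:\mu(H)\ge1-\delta\}$, for fixed $\epsilon$ the left side dominates $\inf\{\mathscr{L}_\epsilon(H):\mu(H)\ge1-\delta\}$ for every $\delta$, hence dominates $\lim_{\delta\to0}\inf\{\mathscr{L}_\epsilon(H):\mu(H)\ge1-\delta\}$; letting $\epsilon\to0$ gives $\ge\mathfrak{T}$.

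The second step derives $\mathfrak{T}=\inf\{dim_H^{\widetilde{BSP}}(\{F_n\}_{n=1}^{\infty},\Phi):\mu(H)=1\}$. For the inequality $\le$: for $\eta>0$ and each large $j$, the first step gives $H_{1/j}$ with $\mu(H_{1/j})=1$ and $\mathscr{L}_{1/j}(H_{1/j})<\mathfrak{T}+\eta$; then $H=\bigcap_j H_{1/j}$ has $\mu(H)=1$, and monotonicity in the set plus monotonicity of $\mathscr{L}_\epsilon$ in $\epsilon$ gives $\mathscr{L}_\epsilon(H)<\mathfrak{T}+\eta$ for all $\epsilon>0$, i.e. $dim_H^{\widetilde{BSP}}(\{F_n\}_{n=1}^{\infty},\Phi)\le\mathfrak{T}+\eta$, whence $\inf\{dim_H^{\widetilde{BSP}}:\mu(H)=1\}\le\mathfrak{T}$. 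For the reverse: if $\mu(H)=1$ then $\mu(H)\ge1-\delta$ for every $\delta$, so \Cref{def 5.5} gives $dim_\mu^{\widetilde{BSP}}(\{F_n\}_{n=1}^{\infty},\Phi,\epsilon)\le\widetilde{\mathscr{L}}_\epsilon(H,\{F_n\}_{n=1}^{\infty},\Phi)$; letting $\epsilon\to0$ and invoking \Cref{inter-1-1} yields $\mathfrak{T}\le dim_H^{\widetilde{BSP}}(\{F_n\}_{n=1}^{\infty},\Phi)$. Finally $\mathfrak{T}=dim_\mu^{\widetilde{KP}}(\{F_n\}_{n=1}^{\infty},\Phi)$ is exactly \Cref{prop 2.10}, so all four quantities coincide.

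I do not expect a genuine obstacle: the argument is a faithful transport of the proof of \Cref{inverse}, the packing-specific ingredients (the $\mathscr{L}_\epsilon$-versus-$\widetilde{\mathscr{L}}_\epsilon$ comparison for the measure-theoretic quantity and the $\sup$-over-unions identity for $\mathscr{L}_\epsilon$) being routine Carath\'eodory--Pesin bookkeeping of the kind already carried out in \Cref{inter-1-1} and \Cref{prop 2.211}. The only mildly delicate point is keeping the order of the limits straight when passing from the $\delta$-dependent infima to the constraint $\mu(H)=1$, and this is handled, as in \Cref{inverse}, by exploiting the monotonicity of $\delta\mapsto\inf\{\mathscr{L}_\epsilon(H):\mu(H)\ge1-\delta\}$ together with the monotonicity of $\mathscr{L}_\epsilon$ in the set and in $\epsilon$.
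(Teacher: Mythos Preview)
Your proposal is correct and is precisely the approach the paper itself indicates: the paper does not give a separate proof of \Cref{inverse-packing} but states that it follows ``by a similar method in Theorem~\ref{inverse}'' and leaves the details to the reader. You have carried out exactly this transcription, supplying the two packing-side bookkeeping items (the $\mathscr{L}_\epsilon$ vs.\ $\widetilde{\mathscr{L}}_\epsilon$ comparison paralleling \Cref{inter-1-1}/\Cref{rem2.8}(1), and the $\epsilon$-level countable-stability of $\mathscr{L}_\epsilon$ underlying \Cref{prop 2.211}(2)) that make the translation go through, and invoking \Cref{prop 2.10} for the Katok-type equality.
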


	\bigskip \noindent{\bf Acknowledgement} The authors are grateful to the referee for a careful reading of the paper and a multitude of corrections and helpful suggestions.

	\bigskip \noindent{\bf Data availability}
	Data sharing not applicable to this article as no datasets were generated or
	analysed during the current study.

	\bigskip \noindent{\bf Declarations}
	
	\bigskip \noindent{\bf Conflict of interest}
	The authors have no financial or proprietary interests in any material discussed
	in this article.
	
	\bigskip \noindent{\bf Ethical Approval}
	Not applicable.

\end{document}